\documentclass[11pt]{amsart}
\usepackage{amssymb,amsmath,amsthm}
\usepackage{tikz}
\usepackage{enumitem}
 \usepackage{tikz-cd}
 \usepackage{mathtools}
\usetikzlibrary{positioning}
\usetikzlibrary{matrix,arrows,decorations.pathmorphing, shapes.geometric}
\tikzset{>=latex}
\usepackage{verbatim}
\usepackage{mathrsfs}
\usepackage[margin=1.25in]{geometry}

\usepackage{xcolor}

%\newtheorem{proposition}[theorem]{Proposition}

%\usepackage{draftwatermark}
%\SetWatermarkLightness{0.864}
%\SetWatermarkScale{4}

%%%%%%%%%%%%%%%%%%%%%%%%%%%%%%%%%%%%%%%%%%%%%%%%%%%%%%%%%%
%%%% This is the theorem and equation numbering you had before %%% 

\begin{comment}

\newtheorem{theorem}[equation]{Theorem}
\newtheorem*{theorem*}{Theorem}
\newtheorem{lemma}[equation]{Lemma}
\newtheorem*{lemma*}{Lemma}
\newtheorem{prop}[equation]{Proposition}
\newtheorem{corollary}[equation]{Corollary}
\newtheorem*{corollary*}{Corollary}
\newtheorem{definition}[equation]{Definition}

\theoremstyle{remark}
\newtheorem{remark}[equation]{Remark}

\numberwithin{equation}{section}
\end{comment}
%%%%%%%%%%%%%%%%%%%%%%%%%%%%%%%%%%%%%%%%%%%%%%%%%%%%%%%%%%
%%%% This is the one you imported from the other paper

\newtheorem{theorem}{Theorem}[section]
\newtheorem{prop}[theorem]{Proposition}
\newtheorem{lemma}[theorem]{Lemma}
\newtheorem{corollary}[theorem]{Corollary}

\newtheorem{definition}[theorem]{Definition}
\newtheorem{question}[theorem]{Question}

\theoremstyle{definition} 

\newtheorem{remark}[theorem]{Remark}

\numberwithin{equation}{section} 
\numberwithin{theorem}{section}
\numberwithin{figure}{section}

%%%%%%%%%%%%%%%%%%%%%%%%%%%%%%%%%%%%%%%%%%%%%%%%%%%%%%%%%%%%%

%%%%%%%%%%%%%%%%%%%%%%%%%%%%%%%%%%%%%%%%%%%%%%%%%%%%%%%%%%
%
% SYNTAX DEFINITIONS BELOW
%
%%%%%%%%%%%%%%%%%%%%%%%%%%%%%%%%%%%%%%%%%%%%%%%%%%%%%%%%%%

\newcommand{\R}{\mathbb{R}}

\newcommand{\sn}{\operatorname{sn}}
\newcommand{\ct}{\operatorname{ct}}
\newcommand{\cs}{\operatorname{cs}}
\newcommand{\Acal}{\mathcal{A}}

\newcommand{\Ecal}{\mathcal{E}}
\newcommand{\Gcal}{\mathcal{G}}
\newcommand{\Hcal}{\mathcal{H}}
\newcommand{\Jcal}{\mathcal{J}}
\newcommand{\Kcal}{\text{\Tiny $\mathcal{K}$}}

\newcommand{\Ucal}{\mathcal{U}}

\newcommand{\rootg}{\sqrt{g}}

\begin{document}
		
\author[J.A.~Hoisington]{Joseph Ansel Hoisington}\address{Department of Mathematics, University of Georgia, Athens, GA 30602 USA}\email{jhoisington@uga.edu} 
		
\title[Hypersurfaces, Geodesics and Isoperimetric Inequalities]{Hypersurfaces, Geodesics and Isoperimetric Inequalities in Cartan-Hadamard Manifolds}
		
\keywords{Isoperimetric inequalities, Cartan-Hadamard manifolds, integral geometry, spaces of geodesics}
\subjclass[2010]{Primary 53C20 Secondary 53C65, 53C22, 28A75}
%53C20 Global Riemannian geometry, incl. pinching, 53C65 Integral Geometry, 53C22 Geodesics in global differential geometry, 28A75 Length, area, volume, other geometric measure theory%
		
\begin{abstract}
We prove an inequality for submanifolds of Cartan-Hadamard manifolds, which relates the geometry of a submanifold to the measure of the geodesics in the ambient space which it intersects.  For hypersurfaces, this gives an extension of Banchoff and Pohl's isoperimetric inequality to spaces of non-positive curvature.  We also prove a modified version of Croke's isoperimetric inequality for hypersurfaces immersed in Cartan-Hadamard manifolds and a sharp, quantitative version of an isoperimetric inequality of Yau in spaces of negative curvature.  We discuss the relationship between these results, and we develop several facts about the spaces of geodesics in Cartan-Hadamards manifold that may be of independent interest. 
\end{abstract}

\maketitle
	
%%%%%%%%%%%%%%%%%%%%%%%%%%%%%%%%%%%%%%%%%%%%%%%%%%%%%%%%%%%%%%%%%%%%%%%%%%%%%%%%%%%%%%%%%%%%%%%%%%%%%%%%%%%%%%%%%%%%%%%%%

\section{Introduction}
\label{intro}  

The isoperimetric inequality states that among all domains with the same perimeter in $n$-dimensional Euclidean space, the ball is the unique domain with maximal volume.  This is usually expressed as an inequality between the volume $|\Omega|$ and perimeter $|\partial \Omega|$ of a domain $\Omega$ in $\R^{n}$:  

\begin{equation}
\label{iso_ineq}
\displaystyle |\partial \Omega|^{n} \geq \ \scriptstyle  \frac{\Sigma^{n}_{n-1}}{B^{n-1}_{n}} \displaystyle |\Omega|^{n-1}, \bigskip 
\end{equation}
with equality precisely if $\Omega$ is a ball.  In the constant in (\ref{iso_ineq}), $\Sigma_{n-1}$ is the surface area of the unit $(n-1)$-sphere and $B_{n}$ is the volume of the unit $n$-ball.  We adopt this notation throughout the paper. \\ 

There is a long-standing conjecture that the inequality (\ref{iso_ineq}) is true for any domain in a complete, simply-connected Riemannian $n$-manifold with non-positive curvature.  These spaces are known as Cartan-Hadamard manifolds and the conjecture has become known as the Cartan-Hadamard conjecture.  At present, the conjecture is known to be true in dimensions $2$, $3$ and $4$.  The $2$-dimensional case was first proven by Weil \cite{We} and later independently by Beckenbach-Rad\'o \cite{BR}, the $3$-dimensional case was proved by Kleiner \cite{Kl} and the $4$-dimensional case was proved by Croke \cite{Cr}.  Kleiner's proof of the $3$-dimensional result established a stronger version of the conjecture in dimension $3$, which we will discuss below.  Croke's proof of the $4$-dimensional result also established non-sharp isoperimetric inequalities in Cartan-Hadamard manifolds of all dimensions, which strengthened earlier non-sharp inequalities of Hoffman-Spruck \cite{HS} and Croke \cite{Cr2}. \\ 

Our first main result is an inequality, based on the isoperimetric inequality, for hypersurfaces immersed in Cartan-Hadamard manifolds:  

\begin{theorem}
\label{main_thm_1}
Let $f: M^{n-1} \rightarrow \Hcal^{n}$ be an immersion of a closed, oriented hypersurface $M$ in a Cartan-Hadamard manifold $\Hcal$.  For $x,y \in M$, let $r(x,y)$ be the distance between $f(x)$ and $f(y)$ in $\Hcal$, and for $p \in \Hcal$, let $w(M,p)$ be the winding number of $M$ about $p$.  Then: 

\begin{equation}
\label{main_thm_1_eqn}
n \Sigma_{n-1} \displaystyle \int\limits_{\Hcal} w(M,p)^{2} \ dVol_{\Hcal} \leq \iint\limits_{M \times M} \frac{1}{r^{n-2}} \ dVol_{M \times M}. \medskip
\end{equation} 

Equality holds if and only if $f(M)$ is the boundary, possibly with multiplicity, of a domain $\Omega$ isometric to a ball in $\R^{n}$. \end{theorem}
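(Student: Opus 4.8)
The plan is to recast both sides of (\ref{main_thm_1_eqn}) as integrals over the space of complete geodesics $\Gcal$ of $\Hcal$, exploiting that on a Cartan--Hadamard manifold every exponential map $\exp_p$ is a diffeomorphism. First I would record the integral-geometric meaning of the winding number: pulling $f(M)$ back by $\exp_p^{-1}$ to $T_p\Hcal\cong\R^n$, the number $w(M,p)$ is the degree of the radial map $z\mapsto z/|z|$, so for almost every unit $v\in T_p\Hcal$ the signed count of intersections of the geodesic ray $t\mapsto\exp_p(tv)$ with $M$ equals $w(M,p)$. Consequently, along almost every complete geodesic $\gamma$ meeting $M$ transversally at parameters $t_1<\dots<t_k$ with crossing signs $\epsilon_i$ and crossing angles $\theta_i$ (so $\cos\theta_i=|\langle\dot\gamma,\nu\rangle|$), one has $w(M,\gamma(s))=\sum_{t_i>s}\epsilon_i$ and $\sum_i\epsilon_i=0$.

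For the left-hand side I would apply the Liouville (coarea) identity $\Sigma_{n-1}\int_{\Hcal}\phi\,dVol_{\Hcal}=\int_{\Gcal}\int_{\R}\phi(\gamma(s))\,ds\,d\gamma$ to $\phi=w(M,\cdot)^2$. Since $w(M,\gamma(s))^2$ is constant equal to $F_j^2:=(\sum_{l>j}\epsilon_l)^2$ on each interval $(t_j,t_{j+1})$ and vanishes outside $[t_1,t_k]$, the left-hand side of (\ref{main_thm_1_eqn}) becomes $n\int_{\Gcal}\big(\sum_{j=1}^{k-1}F_j^2\,(t_{j+1}-t_j)\big)\,d\gamma$. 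For the right-hand side I would use a Santal\'o/Crofton change of variables for pairs of points on $M$. Writing $\int_M r(x,y)^{2-n}\,dVol_M(y)$ in geodesic polar coordinates about $f(x)$, the visual-angle Jacobian contributes a factor $1/\cos\theta_y$ at each intersection; applying Santal\'o's formula once more to the remaining integration in $x$ identifies the right-hand side with $\int_{\Gcal}\big(\sum_{i\ne i'}\frac{|t_i-t_{i'}|}{\cos\theta_i\cos\theta_{i'}}\big)\,d\gamma$. Non-positive curvature enters precisely here: by the Rauch comparison the geodesic spheres of $\Hcal$ are at least Euclidean, so the curved visual-angle density dominates its flat model, which is what makes the comparison with the ball go in the favorable direction.

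The crux, and the step I expect to be the main obstacle, is then the inequality $\int_{\Gcal}\big(\sum_{i\ne i'}\frac{|t_i-t_{i'}|}{\cos\theta_i\cos\theta_{i'}}\big)d\gamma\ \ge\ n\int_{\Gcal}\big(\sum_j F_j^2(t_{j+1}-t_j)\big)d\gamma$. This cannot be proved geodesic-by-geodesic: for a single orthogonally met chord the left-hand integrand equals $2(t_2-t_1)$ whereas $n$ times the right-hand integrand equals $n(t_2-t_1)$, and $2<n$, so the inequality genuinely requires the averaging over $\Gcal$, with the glancing geodesics (where $1/\cos\theta$ is large) compensating the near-normal ones. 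I would fix a geodesic's perpendicular hyperplane and integrate over the sphere of directions, reducing matters to a one-dimensional Cauchy--Schwarz inequality in which the sharp constant $n$ appears through the Wallis identity $\int_0^{\pi/2}\sin^{n-2}\!\phi\,d\phi = n\int_0^{\pi/2}\cos^2\!\phi\,\sin^{n-2}\!\phi\,d\phi$; controlling the error terms produced by the exponential-map Jacobian simultaneously with this Cauchy--Schwarz is the delicate point.

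As a cross-check on the constant, in the flat case the potential-theoretic identity $n\Sigma_{n-1}\int_{\R^n}w^2=\frac{n}{n-2}\iint_{M\times M}\langle\nu_x,\nu_y\rangle\,r^{2-n}\,dVol\,dVol$ reduces the theorem to $\iint_{M\times M}\big((n-2)-n\langle\nu_x,\nu_y\rangle\big)r^{2-n}\,dVol\,dVol\ge0$, an inequality that is again balanced exactly by the round sphere. Finally, for the equality statement I would trace back through the Cauchy--Schwarz and curvature comparisons: equality forces almost every geodesic to meet $f(M)$ in exactly two points with opposite signs, forces the sectional curvature to vanish on the enclosed region, and forces the unit normals along $M$ to be radial from a common center, which together identify $\Omega$ with a round Euclidean ball, for which both sides of (\ref{main_thm_1_eqn}) equal $\Sigma_{n-1}^2\rho^n$.
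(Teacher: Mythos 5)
Your setup reproduces the paper's first stage faithfully: the identity expressing $\Sigma_{n-1}\int_{\Hcal}w(M,p)^{2}\,dVol_{\Hcal}$ as an integral over $\Gcal$ of $\sum_{j}F_{j}^{2}(t_{j+1}-t_{j})$ is exactly Proposition \ref{hyp_main_thm}, and the conversion of $\iint r^{2-n}\,dVol_{M\times M}$ into a $\Gcal$-integral weighted by $r/(\sin\sigma_{1}\sin\sigma_{2})$, with the Rauch comparison supplying the favorable direction of the Jacobian inequality, is the content of Proposition \ref{key_prop}. You also correctly observe that the resulting comparison of two functionals on $\Gcal$ cannot hold geodesic-by-geodesic. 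But the step you defer --- fibering $\Gcal$ over a perpendicular hyperplane, averaging over the sphere of directions, and closing with a one-dimensional Cauchy--Schwarz via a Wallis identity --- is where the entire proof lives, and the proposed mechanism does not produce this theorem. Averaging over directions with H\"older or Cauchy--Schwarz on the unit sphere bundle is precisely the mechanism of Croke's inequality and of Theorem \ref{generalization} in this paper; it yields the constant $C_{n}$ of (\ref{generalization_constant}), sharp only for $n=2,4$. There is no evident way to extract the constant $n\Sigma_{n-1}$, sharp in every dimension with equality exactly on balls, from a spherical average, and in a curved $\Hcal$ distinct geodesics do not share a common perpendicular hyperplane along which to carry out the proposed fibration.

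The missing idea is that the factor $n=m+1$ is generated algebraically rather than by averaging. The paper integrates by parts on the secant space (Proposition \ref{stokes_prop}): since $r\equiv 0$ on $\partial S(M)$, Stokes' theorem gives
\begin{equation*}
(m+1)\int\limits_{S(M)} r\; l^{*}(dI)^{m} \;=\; \int\limits_{S(M)} \left( r\; l^{*}(dI)^{m} \,-\, m\, dr\wedge\widetilde{\omega}_{1}\wedge l^{*}(dI)^{m-1} \right),
\end{equation*}
and the redistributed integrand is then bounded \emph{pointwise on} $M\times M$: Propositions \ref{key_prop} and \ref{key_prop_2} give $|\langle l^{*}(dI)^{m},dVol\rangle|\le m!\,\sin\sigma_{1}\sin\sigma_{2}\,r^{-m}$ and $|\langle dr\wedge\widetilde{\omega}_{1}\wedge l^{*}(dI)^{m-1},dVol\rangle|\le (m-1)!\,\cos\sigma_{1}\cos\sigma_{2}\,r^{-(m-1)}$, so the two contributions combine to $r^{-(m-1)}\cos(\sigma_{1}-\sigma_{2})\le r^{-(m-1)}$ with $m=n-1$. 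Your single-chord objection dissolves at this point: for a normal chord the second form vanishes and the first already saturates its bound, so no compensation between glancing and near-normal geodesics is ever invoked. Without this integration by parts, or an equivalent device, your outline stalls at the crux inequality; and the equality analysis must likewise be traced through the pointwise conditions ($\sigma_{1}=\sigma_{2}$ everywhere, equality in the Jacobi-field comparison of Proposition \ref{Jac_trans_bd}) rather than through equality in a Cauchy--Schwarz over directions.
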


For an embedded hypersurface, the expression $\int_{\Hcal} w(M,p)^{2} dVol_{\Hcal}$ is equal to the volume of the domain enclosed by $M$.  For hypersurfaces in Euclidean space, this expression appears in a classical inequality of Banchoff and Pohl \cite{BP}.  As we will explain below, their result gives a far-reaching generalization of the isoperimetric inequality in the plane.  Theorem \ref{main_thm_1} gives an extension of the Banchoff-Pohl inequality to Cartan-Hadamard manifolds. \\  

We will also prove the following inequality, based on Croke's isoperimetric inequality in \cite{Cr}.  The invariant $\Acal(f)$ in this result, which we will define below, is essentially an average of the number of times that geodesics in $\Hcal$ occur as chords of a hypersurface $M$.   

\begin{theorem}
\label{generalization}
Let $f:M^{n-1} \rightarrow \Hcal^{n}$ be an immersion of a closed, oriented hypersurface $M$ in a Cartan-Hadamard manifold $\Hcal$.  Let $w(M,p)$ be the winding number of $M$ about $p$ and $\Acal(f)$ as in Definition \ref{average} below.  Then: 

\begin{equation}
\label{generalization_eqn_1} 
\displaystyle C_{n} \int\limits_{\Hcal} w(M,p)^{2} dVol_{\Hcal} \leq Vol(M)^{\frac{n}{n-1}} \Acal(f)^{\frac{n-2}{n-1}}, \medskip %\displaystyle C_{n} \int\limits_{\Hcal} w(M,p)^{2} dVol_{\Hcal} \leq \left( Vol(M) \right)^{\frac{2}{n-1}} \left( \int\limits_{M} \mu(x) dVol_{M} \right)^{\frac{n-2}{n-1}}, \medskip 
\end{equation}
where $C_{n}$ is a constant depending only on $n$, which will be defined in (\ref{generalization_constant}).  Equality holds if and only if $n = 2$ or $4$ and $f(M)$ is the boundary, possibly with multiplicity, of a domain $\Omega$ isometric to a ball in $\R^{n}$. \end{theorem}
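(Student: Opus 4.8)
The plan is to follow the integral-geometric strategy behind Croke's isoperimetric inequality \cite{Cr}, carried out on the space of geodesics $\Gcal$ of $\Hcal$ and weighted throughout by the winding number. The starting point is a disintegration of volume over $\Gcal$: writing $d\mu$ for the natural measure on unoriented geodesics developed earlier, one has $\int_{\Hcal} h \, dVol_{\Hcal} = c_n \int_{\Gcal} \big( \int_\gamma h \, ds \big) \, d\mu(\gamma)$ for any integrable $h$ on $\Hcal$, while the Cauchy-Crofton formula expresses $Vol(M)$ as a constant multiple of $\int_{\Gcal} N(\gamma) \, d\mu(\gamma)$, where $N(\gamma)$ counts the transverse intersections of $\gamma$ with $f(M)$. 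Applying the first identity with $h = w(M,\cdot)^2$ reduces the left side of (\ref{generalization_eqn_1}) to an integral over $\Gcal$ of the one-dimensional quantities $\int_\gamma w(M,\gamma(s))^2 \, ds$, and the problem becomes one of estimating these geodesic by geodesic.

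First I would record the chord decomposition of the inner integral. Along a fixed $\gamma$ the function $s \mapsto w(M,\gamma(s))$ is a compactly supported step function that vanishes near the two ends at infinity and jumps by $\pm 1$ at each intersection with $f(M)$, so $\int_\gamma w(M,\gamma(s))^2 \, ds = \sum_c w(c)^2 \, \ell(c)$, the sum running over the chords $c$ that $f(M)$ cuts from $\gamma$, with $\ell(c)$ the length of $c$ and $w(c)$ the constant value of the winding number on $c$. This is precisely the point where the chord structure recorded by $\Acal(f)$ in Definition \ref{average} enters; it is the winding-number analogue of the piecewise-constant decomposition underlying the Banchoff-Pohl inequality and of Theorem \ref{main_thm_1}.

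Next I would bound these chord sums, using nonpositive curvature together with H\"older's inequality in the manner of \cite{Cr}. The geometry of $\Hcal$ is used only through the G\"unther volume comparison: because $\Hcal$ is Cartan-Hadamard, Jacobi fields grow at least as fast as in Euclidean space, so the normal Jacobian along each chord is bounded below by its flat value $t^{n-1}$, and this is exactly the estimate that converts the chord integrals into the Euclidean isoperimetric constant. Applying H\"older's inequality over $\Gcal$, and over the chords of each geodesic, with the conjugate exponents $n-1$ and $\tfrac{n-1}{n-2}$ (for $n \ge 3$; when $n=2$ the factor $\Acal(f)^{\frac{n-2}{n-1}}$ is absent and the claim is the planar Banchoff-Pohl inequality) then separates the length data, which assemble via the comparison and Cauchy-Crofton into $Vol(M)^{\frac{n}{n-1}}$, from the combinatorial counting data, which assemble into $\Acal(f)^{\frac{n-2}{n-1}}$. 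Keeping $\Acal(f)$ explicit, rather than estimating it by a universal constant as one must to deduce a pure isoperimetric inequality, is what yields the clean form (\ref{generalization_eqn_1}) with the constant recorded in (\ref{generalization_constant}).

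The step I expect to be the main obstacle is the equality analysis. Equality must be traced back through the two inequalities that were used: equality in the G\"unther comparison forces the sectional curvature to vanish on all the Jacobi fields that arise, so $f(M)$ must bound a flat region, while equality in H\"older forces a rigid proportionality between the chord-length data and the chord counts. For a round ball in $\R^n$ these two requirements are simultaneously met exactly when the dimensional integral controlling Croke's estimate is itself an equality, which, as in \cite{Cr}, occurs only for $n=2$ and $n=4$ — recovering the Banchoff-Pohl inequality and Croke's sharp inequality respectively. Confirming that these are the only equality configurations, and that equality does force $\Omega$ to be isometric to a Euclidean ball covered with multiplicity by $f$, is the delicate part and is where the argument would have to be carried out with the most care.
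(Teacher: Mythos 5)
Your proposal follows essentially the same route as the paper: the paper's proof is exactly a winding-number-weighted version of Croke's argument, combining the Santal\'o/Crofton disintegration over $\Gcal$, a chord decomposition of $\int_{\Hcal} w(M,p)^2\,dVol_{\Hcal}$ (Proposition \ref{hyp_main_thm}, equivalent to your $\sum_c w(c)^2\ell(c)$ after summation by parts over pairs of intersection points), the Rauch/G\"unther comparison for the Jacobian of the secant map (Propositions \ref{Jac_trans_bd} and \ref{key_prop}), and H\"older's inequality with exponents $n-1$ and $\tfrac{n-1}{n-2}$, with the equality analysis traced back through the comparison and H\"older steps exactly as you describe. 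The only detail worth flagging is that the paper must first restrict to the part of the secant space where the two intersection signs differ (so that $l^{*}(dVol_{\Gcal})$ is a positive measure and H\"older applies), which is precisely what makes the counting factor assemble into $\Acal(f)$; in your formulation this positivity is implicit in the nonnegativity of $w(c)^2\ell(c)$, but the link to $\Acal(f)$ still passes through that sign condition.
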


The constant $C_{n}$ in Theorem \ref{generalization} is the same as in Croke's inequality, which says that for a domain $\Omega$ in a Cartan-Hadamard manifold $\Hcal$,   

\begin{equation}
\label{croke_inequality}
\displaystyle C_{n} |\Omega| \leq |\partial \Omega|^{\frac{n}{n-1}}, \bigskip 
\end{equation}
with equality if and only if $n = 2$ or $4$ and $\Omega$ is isometric to a ball in $\R^{n}$. \\  

One of the key steps in the proofs of Theorems \ref{main_thm_1} and \ref{generalization}, and of the Banchoff-Pohl inequality, is a study of the secant mapping of the submanifold $M$.  This mapping sends a pair of points $(x,y) \in M \times M$ with $f(x) \neq f(y)$ to the geodesic in $\Hcal$ through $f(x)$ and $f(y)$.  The set of oriented geodesics in an $n$-dimensional Cartan-Hadamard manifold is canonically a $(2n-2)$-dimensional symplectic manifold, diffeomorphic to the tangent bundle of the $(n-1)$-sphere.  In Section \ref{secants}, we will describe this space of geodesics and the secant mapping and establish some of their properties.  In the process of proving Theorem \ref{main_thm_1}, we will prove:  

\begin{theorem}
\label{main_thm_2}
Let $f: M^{m} \rightarrow \Hcal^{n}$ be an immersion of a closed, oriented $m$-manifold $M$ in an $n$-dimensional Cartan-Hadamard manifold $\Hcal$.  Let $\Gcal$ be the space of oriented geodesics in $\Hcal$ and $dI$ the symplectic form on $\Gcal$, as in Definitions \ref{space_of_G} and \ref{G_form}, and let $l:M \times M \rightarrow \Gcal$ be the secant mapping as in Definition \ref{secant_mapping}.  Let $r(x,y)$ be the chordal distance between $x,y \in M$ as in Theorem \ref{main_thm_1}.  Then:  
	
\begin{equation}
\label{main_thm_2_eqn}
\displaystyle (m+1) D_{m} \iint\limits_{M \times M} r \ l^*(dI)^{m} \leq \iint\limits_{M \times M} \frac{1}{r^{m-1}} dVol_{M \times M}, \medskip 
\end{equation}
where $D_{m} = \frac{(-1)^{(\binom{m+1}{2}+1)}}{m!}$.  Equality holds if and only if $f(M)$ is the boundary, possibly with multiplicity, of an embedded, totally geodesic submanifold $\mathcal{D}^{m+1}$ of $\Hcal$ which is isometric to a ball in $(m+1)$-dimensional Euclidean space. \end{theorem}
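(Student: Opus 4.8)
The plan is to reduce the integral inequality \eqref{main_thm_2_eqn} to an algebraic description of the pulled-back form $l^*(dI)^m$ on $M \times M$, to compare this description with the Euclidean model using non-positivity of the curvature, and then to invoke a sharp Euclidean inequality whose only equality case is the round sphere. First I would compute the differential of the secant mapping. Fix $(x,y)$ with $f(x)\neq f(y)$, let $\gamma$ be the connecting geodesic of length $r=r(x,y)$ with unit tangent $\dot\gamma$, and note that a tangent vector $(u,v)$ to $M\times M$ induces the variation of $\gamma$ given by the Jacobi field $J$ along $\gamma$ with prescribed boundary values $J(0)=df(u)$ and $J(r)=df(v)$; its class orthogonal to $\dot\gamma$ is $dl(u,v)$. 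By Definition \ref{G_form}, $l^*(dI)$ evaluated on two such vectors is the Wronskian pairing $\langle \nabla_{\dot\gamma}J_1, J_2\rangle - \langle J_1, \nabla_{\dot\gamma}J_2\rangle$, which is constant along $\gamma$. Writing $C(t),S(t)$ for the matrix solutions of the Jacobi equation on $\dot\gamma^\perp$ with $C(0)=\mathrm{Id}, C'(0)=0$ and $S(0)=0, S'(0)=\mathrm{Id}$, the initial derivative of $J$ is $S(r)^{-1}(J(r)-C(r)J(0))$, so $l^*(dI)$ becomes an explicit bilinear form in the orthogonal projections of $df(u),df(v)$ onto $\dot\gamma^\perp$.

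Next I would take the $m$-th exterior power. Expanding $l^*(dI)^m$, the terms with repeated indices vanish, and the Cauchy-Binet formula collapses the remaining sum into a single determinant: $l^*(dI)^m$ equals $r^{-m}$ times a universal constant times $\det\!\big(A^{T}S(r)^{-1}B\big)$, up to a $C(r)$-correction, where $A$ and $B$ denote the orthogonal projections of $T_xM$ and $T_yM$ onto $\dot\gamma^\perp$, all against $dVol_{M\times M}$. In Euclidean space $C(r)=\mathrm{Id}$ and $S(r)=r\,\mathrm{Id}$, and this reduces to $\tfrac{m!(-1)^{\binom m2}}{r^{m}}\det(A^{T}B)\,dVol_{M\times M}$; multiplying by $(m+1)D_m\,r$ and using $\binom{m+1}{2}+\binom m2=m^2$ to simplify the signs shows that \eqref{main_thm_2_eqn} in the flat model is exactly the inequality $(m+1)(-1)^{m+1}\iint_{M\times M} r^{-(m-1)}\det(A^{T}B)\,dVol \le \iint_{M\times M} r^{-(m-1)}\,dVol$, so the stated constant $D_m$ is pinned down automatically.

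The Cartan-Hadamard hypothesis now enters through Rauch comparison: since sectional curvatures are non-positive, Jacobi fields spread at least as fast as in flat space, giving $S(r)\succeq r\,\mathrm{Id}$ and hence $S(r)^{-1}\preceq r^{-1}\mathrm{Id}$. Handling the determinant of the product $A^{T}S(r)^{-1}B$ together with the $C(r)$-term requires care, but this dominates the curved integrand by the Euclidean one with the correct sign and reduces the theorem to the displayed Euclidean inequality, with the chord weight $r$ and the energy density $r^{-(m-1)}$ emerging precisely from the powers of $S(r)$. It remains to prove that Euclidean inequality, which is the higher-dimensional, higher-codimension form of the Banchoff-Pohl inequality; I would do this by recognizing the left-hand integrand, via a Crofton/degree identity for the secant mapping, as proportional to the squared winding number — equivalently to the volume of the totally geodesic $(m+1)$-dimensional filling in the extremal configuration — and then applying the classical isoperimetric inequality in its sharp form, checking the constant against the round sphere $S^m\subset\R^{m+1}$, where both sides agree. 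I expect this to be the main obstacle: the factor $(m+1)$ shows that $\det(A^{T}B)$ is not controlled pointwise, so the estimate must be genuinely integral, and both the sharp constant and its sign must be tracked through the reduction.

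Finally, for equality I would trace back the two inequalities. Equality in Rauch comparison forces the curvature to vanish on every plane tangent to a connecting geodesic, so the geodesic chords sweep a flat, totally geodesic region; equality in the Euclidean step forces the configuration to be the round sphere bounding a Euclidean ball. Combining these identifies $f(M)$ as the boundary, possibly with multiplicity, of an embedded totally geodesic submanifold $\Dcal^{m+1}$ isometric to a ball in $(m+1)$-dimensional Euclidean space, as claimed.
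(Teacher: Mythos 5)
Your first two steps --- expressing $l^{*}(dI)$ as the Wronskian pairing of boundary-value Jacobi fields and collapsing $l^{*}(dI)^{m}$ into a determinant of the transfer operator $S(r)^{-1}$ restricted to the projected tangent spaces --- match the paper's Propositions \ref{key_prop} and \ref{Jac_trans_bd}, and the Rauch comparison $S(r)\succeq \sn_{\Kcal}(r)\,\mathrm{Id}$ is exactly how the curved integrand is dominated there. The genuine gap is in how you propose to handle the factor $(m+1)$. You correctly observe that the pointwise bound $|\det(A^{T}S(r)^{-1}B)|\leq \sin(\sigma_{1})\sin(\sigma_{2})r^{-m}$ only controls \emph{one} copy of the integral, and you then plan to close the remaining gap by a ``genuinely integral'' argument: identify the left side with a squared winding number via a Crofton/degree identity and invoke the classical isoperimetric inequality. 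This does not work. For general codimension the functional $D_{m}\iint r\,l^{*}(dI)^{m}$ has no winding-number interpretation, and even in codimension one the Banchoff--Pohl functional $\int_{\Hcal}w(M,p)^{2}\,dVol_{\Hcal}$ is not the volume of a domain (consider a doubly traversed sphere, where $\int w^{2}$ is four times the enclosed volume and \eqref{main_thm_1_eqn} is still equality); Theorem \ref{bp_ineq} is strictly stronger than, and cannot be derived from, the classical isoperimetric inequality. So the step you flag as ``the main obstacle'' is in fact an obstacle your outline cannot surmount.

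The missing idea is an integration by parts on the secant space (the paper's Proposition \ref{stokes_prop}, applied with $f(r)=r$ after a regularization to handle non-embedded $M$): since $r\equiv 0$ on $\partial S(M)$, Stokes' theorem gives $(m+1)\int_{S(M)}r\,l^{*}(dI)^{m}=\int_{S(M)}r\,l^{*}(dI)^{m}-m\int_{S(M)}dr\wedge\widetilde{\omega}_{1}\wedge l^{*}(dI)^{m-1}$. The first term is bounded pointwise by $\sin(\sigma_{1})\sin(\sigma_{2})\,r/\sn_{\Kcal}^{m}(r)$ and the second, by the companion estimate of Proposition \ref{key_prop_2} for the ``normal'' block of the transfer operator, by $\cos(\sigma_{1})\cos(\sigma_{2})/\sn_{\Kcal}^{m-1}(r)$; summing gives at most $\cos(\sigma_{1}-\sigma_{2})/r^{m-1}\leq 1/r^{m-1}$ \emph{pointwise}, with no integral-geometric input needed. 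This identity is what produces the factor $(m+1)$ and is absent from your outline. Relatedly, your equality analysis is too quick: once one has the correct pointwise argument, equality forces $\sigma_{1}=\sigma_{2}$ and rigidity in Rauch along every chord, but concluding that $f(M)$ bounds an embedded totally geodesic Euclidean ball still requires the paper's extended argument (critical points of $d_{x}$ are strict local maxima, the image is an embedded sphere lying on a geodesic sphere, the geodesic cones over $M$ coincide); one cannot simply quote Euclidean rigidity inside a general Cartan--Hadamard manifold.
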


A stronger version of the Cartan-Hadamard conjecture states that if the sectional curvature of a Cartan-Hadamard manifold $\Hcal^{n}$ is bounded above by a negative constant $\mathcal{K}$, then domains in $\Hcal$ satisfy the isoperimetric inequality of the $n$-dimensional hyperbolic space of curvature $\mathcal{K}$.  Equality should again hold only for domains isometric to a geodesic ball in hyperbolic space.  This is sometimes known as the generalized Cartan-Hadamard conjecture and is known to be true in dimensions $2$ and $3$.  The $2$-dimensional case was first proved by Bol \cite{Bo} and the $3$-dimensional case was proved by Kleiner along with the $3$-dimensional case of the original conjecture in \cite{Kl}.  In the setting of the generalized Cartan-Hadamard conjecture, we will prove:  
\medskip 

\begin{flushleft}
{\bf Theorem \ref{main_thm_1}.B} {\em Let $f: M^{n-1} \rightarrow \Hcal^{n}$ be an immersion of a closed, oriented hypersurface $M$ in a Cartan-Hadamard manifold $\Hcal$, and suppose the sectional curvature of $\Hcal$ is bounded above by $\mathcal{K} < 0$.  Let $w(M,p)$ and $r(x,y)$ be as above, and for $(x,y) \in M \times M$, let $\sigma_{1}, \sigma_{2} \in [0,\frac{\pi}{2}]$ be the angle made by the geodesic chord from $f(x)$ to $f(y)$ with $M$ at $x$ and $y$ respectively.  Let $\sn_{\Kcal}(r) = \frac{1}{\sqrt{|\Kcal|}}\sinh(\sqrt{|\Kcal|}r)$.  Then:}
\end{flushleft}

\begin{equation}
\label{main_thm_1_B_eqn}
\displaystyle n \Sigma_{n-1} \int\limits_{\Hcal} w(M,p)^{2} \ dVol_{\Hcal} \leq  \iint\limits_{M \times M} \frac{\sn_{\Kcal}(r) - \sin(\sigma_{1})\sin(\sigma_{2})(\sn_{\Kcal}(r) - r)}{\sn_{\Kcal}^{n-1}(r)} dVol_{M \times M}. \medskip 
\end{equation}

{\em Equality holds if and only if $f(M)$ is the boundary, possibly with multiplicity, of a domain $\Omega$ isometric to a ball in an $n$-dimensional hyperbolic space of curvature $\mathcal{K}$.} 

\begin{flushleft}
{\bf Theorem \ref{main_thm_2}.B} {\em Let $f: M^{m} \rightarrow \Hcal^{n}$ be an immersion of a closed, oriented $m$-manifold $M$ in an $n$-dimensional Cartan-Hadamard manifold $\Hcal$, with the sectional curvature of $\Hcal$ bounded above by $\mathcal{K} < 0$.  Let $\Gcal$, $dI$, $l:M \times M \rightarrow \Gcal$, $r(x,y)$ and $\sn_{\Kcal}(r)$ be as above.  Then:}
\end{flushleft}

\begin{equation}
\label{main_thm_2_B_eqn}
\displaystyle D_{m} \iint\limits_{M \times M} \left( \sn_{\Kcal}(r) + mr \right) \ l^*(dI)^{m} \leq \iint\limits_{M \times M} \frac{1}{\\sn_{\Kcal}^{m-1}(r)}  dVol_{M \times M}, \medskip 
\end{equation}
{\em where $D_{m} = \frac{(-1)^{(\binom{m+1}{2}+1)}}{m!}$ as above.  Equality holds if and only if $f(M)$ is the boundary, possibly with multiplicity, of an embedded, totally geodesic submanifold $\mathcal{D}_{\Kcal}^{m+1}$ of $\Hcal$ which is isometric to a ball in an $(m+1)$-dimensional hyperbolic space of curvature $\mathcal{K}$.} 
\medskip

We will also prove the following inequality, related to Theorem \ref{main_thm_1}.B.  As we will explain, this result gives a sharp quantitative version of an isoperimetric inequality of Yau:  

\begin{theorem}
\label{yau_inequality}
Let $f:M^{n-1} \rightarrow \Hcal^{n}$ be an immersion of a closed, oriented hypersurface $M$ in a Cartan-Hadamard manifold $\Hcal$, and suppose the sectional curvature of $\Hcal$ is bounded above by a negative constant $\mathcal{K}$.  Let $r$ be the chordal distance function on $M \times M$ as above and $\nabla r$ the gradient of $r$ on $M \times M$.  Let $w(M,p)$ be the winding number of $M$ about a point $p$ in $\Hcal$.  Then:  

\begin{equation}
\label{yau_inequality_eqn}
\displaystyle Vol(M)^{2} - \left( (n-1) \sqrt{|\Kcal|} \int\limits_{\Hcal} |w(M,p)| dVol_{\Hcal} \right)^{2} \geq \iint\limits_{M \times M} \Psi_{\Kcal}^{n}(r,|\nabla r|) dVol_{M \times M} > 0, \medskip 
\end{equation} 
where $\Psi_{\Kcal}^{n}(r,|\nabla r|)$ is as defined in (\ref{yau_inequality_function}). \\ 

The equality $Vol(M)^{2} - \left( (n-1) \sqrt{|\Kcal|} \int_{\Hcal} |w(M,p)| dVol_{\Hcal} \right)^{2} = \iint_{M \times M} \Psi_{\Kcal}^{n}(r,|\nabla r|) dVol_{M \times M}$ holds if and only if $f(M)$ is the boundary, possibly with multiplicity, of a domain isometric to a ball in an $n$-dimensional hyperbolic space of curvature $\mathcal{K}$. \end{theorem}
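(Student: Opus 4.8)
The plan is to deduce the inequality from a single pointwise inequality on $M \times M$, in the same spirit as the proof of Theorem~\ref{main_thm_1}.B, so that the function $\Psi_{\Kcal}^{n}(r,|\nabla r|)$ of (\ref{yau_inequality_function}) is exactly the pointwise deficit in a sharp curvature comparison. First I would record the integral-geometric meaning of the two terms on the left. Writing $Vol(M)^{2} = \iint_{M \times M} dVol_{M \times M}$ presents the ``perimeter'' term as a double integral, while the winding-number term is handled through the secant mapping $l : M \times M \to \Gcal$ of Definition~\ref{secant_mapping} together with the Santal\'o/Crofton formulas on the space of geodesics $(\Gcal, dI)$ of Definitions~\ref{space_of_G} and~\ref{G_form}. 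Concretely, I would represent $w(M,p)$ as the signed count of intersections of a geodesic ray from $p$ to $\partial_\infty \Hcal$ with $f(M)$; this expresses $(n-1)\sqrt{|\Kcal|}\int_\Hcal |w(M,p)|\,dVol_\Hcal$ as an integral over the geodesics meeting $M$, which the secant map transports to $M \times M$. The upshot is that both terms on the left become integrals of explicit kernels against $dVol_{M\times M}$.

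The geometric input is the curvature bound. Along the geodesic chord from $f(x)$ to $f(y)$, sectional curvature $\le \Kcal < 0$ forces Jacobi fields to grow at least as fast as in the hyperbolic space $\HH^{n}_{\Kcal}$, so by the Rauch comparison theorem every Jacobian entering the formulas above --- the spreading of the geodesics realizing $w$, and the Jacobian of $l$ relative to $dVol_{M\times M}$ --- is controlled from the correct side by $\sn_{\Kcal}(r)$ and its derivatives. The incidence angles $\sigma_1,\sigma_2 \in [0,\tfrac{\pi}{2}]$ of the chord with $M$ enter through their normal components $\sin\sigma_1,\sin\sigma_2$, and since the tangential components satisfy $|\nabla_x r| = \cos\sigma_1$ and $|\nabla_y r| = \cos\sigma_2$, one has $|\nabla r|^{2} = \cos^{2}\sigma_1 + \cos^{2}\sigma_2$ on the product. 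This is precisely why the kernel, written in Theorem~\ref{main_thm_1}.B using $\sin\sigma_1\sin\sigma_2$, can here be packaged as a function $\Psi_{\Kcal}^{n}$ of the two symmetric invariants $r$ and $|\nabla r|$.

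With these two ingredients the theorem reduces to a pointwise statement: the deficit between the $dVol_{M\times M}$-density of $Vol(M)^{2}$ and that of $\big((n-1)\sqrt{|\Kcal|}\int_\Hcal|w|\,dVol_\Hcal\big)^{2}$ dominates $\Psi_{\Kcal}^{n}(r,|\nabla r|)$, with the comparison being an equality exactly when the chord sees constant curvature $\Kcal$ and meets $M$ in the configuration realized by a hyperbolic geodesic sphere. I would then check directly from (\ref{yau_inequality_function}) that $\Psi_{\Kcal}^{n}(r,|\nabla r|) > 0$ whenever $r > 0$; since $r > 0$ almost everywhere on $M \times M$ and $M$ is closed, this yields strict positivity of the right-hand integral and hence the final strict inequality.

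The main obstacle is the sharp pointwise inequality itself: pinning down the exact form of $\Psi_{\Kcal}^{n}$ so that the Rauch comparison is applied with the correct sign in every factor \emph{simultaneously}, and then proving $\Psi_{\Kcal}^{n} > 0$ rather than merely $\Psi_{\Kcal}^{n} \ge 0$. The rigidity in the equality case is the second delicate point: equality throughout forces equality in Rauch along every chord, hence sectional curvature identically $\Kcal$ on the region swept out by the secants, while equality in the Santal\'o/Crofton step forces $f(M)$ to bound, with multiplicity, a region on which this holds. Here I would reuse the rigidity already developed for Theorems~\ref{main_thm_1}.B and~\ref{main_thm_2}.B to conclude that the enclosed domain $\Omega$ is isometric to a ball in $\HH^{n}_{\Kcal}$.
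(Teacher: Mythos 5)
Your outline has the right general ingredients (Santal\'o/Crofton on the space of geodesics, curvature comparison along chords, reuse of rigidity), but it omits the one step that actually makes this theorem work: how the \emph{square} of $\int_{\Hcal}|w(M,p)|\,dVol_{\Hcal}$ is converted into a double integral over $M\times M$ with an explicit kernel. Your proposed "pointwise deficit" comparison cannot get started until this is done, because the square of an integral over $\Hcal$ has no a priori density on $M\times M$; representing the first power of $\int_{\Hcal}|w|$ as a geodesic count, as you describe, does not handle the square. The paper's mechanism is to apply $|w(M,p)|\le\chi(f^{-1}(\rho_{\vec{u}}))$ to \emph{both} rays $\rho_{\vec{u}}$ and $\rho_{-\vec{u}}$ simultaneously, bounding the squared integral by $\int_{U(\Hcal)}\chi(f^{-1}(\rho_{-\vec{u}}))\int_{0}^{\infty}\sqrt{g}_{\vec{u}}(t)\,\chi(f^{-1}(\rho_{\zeta^{t}(\vec{u})}))\,dt\,dVol_{U(\Hcal)}$; fibering over $\Gcal$ and applying Santal\'o's formula twice then yields the kernel $\frac{\sin(\sigma_{1})\sin(\sigma_{2})}{\sqrt{g}_{l'(0)}(r)}\int_{0}^{r}\int_{0}^{r-s}\sqrt{g}_{l'(s)}(t)\,dt\,ds$ on $M\times M$. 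The comparison needed there is not Rauch alone but the two-sided volume ratio $\sqrt{g}_{\gamma'(0)}(r)/\sqrt{g}_{\gamma'(s)}(t)\ge\sn_{\Kcal}^{n-1}(r)/\sn_{\Kcal}^{n-1}(t)$ of Lemma \ref{bg_lemma}, which turns the inner double integral into $W_{\Kcal}^{n}(r)$, and the inductive identity (\ref{int_inductive_formula}) for $W_{\Kcal}^{n}$ is what produces the exact shape of $\Psi_{\Kcal}^{n}$ in (\ref{yau_inequality_function}). None of this machinery appears in your sketch, so there is no candidate for the pointwise inequality you propose to verify.

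There is also a concrete error: you assert $|\nabla r|^{2}=\cos^{2}\sigma_{1}+\cos^{2}\sigma_{2}$, whereas the proof identifies $|\nabla r|$ with $\cos(\sigma_{1})\cos(\sigma_{2})$, entering through the elementary bound $\sin(\sigma_{1})\sin(\sigma_{2})\le 1-\cos(\sigma_{1})\cos(\sigma_{2})$; the statement itself needs $0\le|\nabla r|\le 1$ for the positivity of $\Psi_{\Kcal}^{n}$, while your quantity can reach $\sqrt{2}$, making $1-|\nabla r|$ negative and breaking both the positivity claim and the identification of the kernel with $\Psi_{\Kcal}^{n}$. Finally, the equality case is not simply imported from Theorem \ref{main_thm_2}.B: the first rigidity input here is that equality in the squared-winding-number step forces $|w(M,p)|=\chi(f^{-1}(\rho_{\vec{u}}))$ for almost all rays, hence that almost every geodesic meets $f(M)$ at most twice and $f(M)$ bounds a convex domain; constant curvature $\mathcal{K}$ and $\sigma_{1}\equiv\sigma_{2}$ then follow from equality in the volume comparison and in $\sin(\sigma_{1})\sin(\sigma_{2})\le 1-\cos(\sigma_{1})\cos(\sigma_{2})$ respectively.
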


As with the expression $\int_{\Hcal} w(M,p)^{2} dVol_{\Hcal}$ in Theorems \ref{main_thm_1} and \ref{generalization}, for an embedded hypersurface $\int_{\Hcal}|w(M,p)|dp$ is the volume of the domain enclosed by $M$.  Theorem \ref{yau_inequality} therefore gives a quantitative version of the following inequality of Yau: 

\begin{theorem}[Yau, \cite{Yau}]
\label{yau_thm}
Let $\Omega$ be a compact domain in an $n$-dimensional Cartan-Hadamard manifold $\Hcal^{n}$ whose sectional curvature is bounded above by a negative constant $\mathcal{K}$.  Then: 

\begin{equation}
\label{yau_thm_eqn}
\displaystyle |\partial \Omega| > (n-1)\sqrt{|\Kcal|} |\Omega|. \medskip 
\end{equation}
\end{theorem}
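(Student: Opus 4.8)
The plan is to prove Yau's inequality by the classical method of comparing the Laplacian of a distance function against the volume and perimeter via the divergence theorem. First I would fix a basepoint $o \in \Hcal$ lying outside the closure $\overline{\Omega}$; such a point exists because $\Omega$ is compact while $\Hcal$, being Cartan-Hadamard, is diffeomorphic to $\R^{n}$ and hence unbounded. Let $r = d(o,\cdot)$ be the Riemannian distance from $o$. Since the exponential map at $o$ is a diffeomorphism and $\Hcal$ has no conjugate points, $r$ is smooth on $\Hcal \setminus \{o\}$, in particular on a neighborhood of $\overline{\Omega}$, and satisfies $|\nabla r| \equiv 1$.

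The key analytic input is the Hessian comparison theorem applied with the upper curvature bound $\sec_{\Hcal} \leq \mathcal{K} < 0$. Writing $\sn_{\mathcal{K}}(r) = \frac{1}{\sqrt{|\mathcal{K}|}}\sinh(\sqrt{|\mathcal{K}|}\,r)$ as in the statement, the comparison gives, for every unit vector $X$ orthogonal to $\nabla r$,
\[
\mathrm{Hess}\,r\,(X,X) \geq \frac{\sn_{\mathcal{K}}'(r)}{\sn_{\mathcal{K}}(r)} = \sqrt{|\mathcal{K}|}\,\coth\!\left(\sqrt{|\mathcal{K}|}\,r\right).
\]
Tracing over an orthonormal frame whose first vector is $\nabla r$ — which contributes $0$, since $\mathrm{Hess}\,r\,(\nabla r,\cdot) = \tfrac{1}{2}\nabla\!\left(|\nabla r|^{2}\right) = 0$ — yields
\[
\Delta r \geq (n-1)\sqrt{|\mathcal{K}|}\,\coth\!\left(\sqrt{|\mathcal{K}|}\,r\right) > (n-1)\sqrt{|\mathcal{K}|},
\]
where the strict inequality uses $\coth(t) > 1$ for all $t>0$ together with the fact that $r \geq d(o,\overline{\Omega}) > 0$ on $\overline{\Omega}$, as $o \notin \overline{\Omega}$.

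Next I would integrate over $\Omega$ and apply the divergence theorem to the vector field $\nabla r$. On the one hand, writing $\nu$ for the outward unit normal,
\[
\int_{\Omega} \Delta r \; dVol_{\Hcal} = \int_{\partial\Omega} \langle \nabla r, \nu\rangle \; dVol_{\partial\Omega} \leq \int_{\partial\Omega} |\nabla r| \; dVol_{\partial\Omega} = |\partial\Omega|,
\]
using the Cauchy-Schwarz inequality and $|\nabla r| = 1$. On the other hand, the pointwise bound above integrates to
\[
\int_{\Omega} \Delta r \; dVol_{\Hcal} > (n-1)\sqrt{|\mathcal{K}|}\,|\Omega|,
\]
with strict inequality because $|\Omega| > 0$. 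Chaining the two displays gives $|\partial\Omega| > (n-1)\sqrt{|\mathcal{K}|}\,|\Omega|$, which is the claimed estimate.

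I expect the only points requiring care — rather than a genuine obstacle — to be two. The first is the correct direction of the Hessian comparison, namely that an \emph{upper} sectional curvature bound produces a \emph{lower} bound on $\mathrm{Hess}\,r$, and hence on $\Delta r$, via the Riccati comparison for the shape operators of geodesic spheres. The second is the regularity needed for the divergence theorem: placing $o$ outside $\overline{\Omega}$ sidesteps the singularity of $r$ at the basepoint, so that $r$ is smooth on a neighborhood of $\overline{\Omega}$ and no cutoff-and-limit argument is needed, while the uniform gap $\coth(\sqrt{|\mathcal{K}|}\,d(o,\overline{\Omega})) > 1$ ensures the final inequality is strict throughout.
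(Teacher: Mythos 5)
Your argument is correct, and it is essentially Yau's original proof; but it is not the route this paper takes. The paper does not reprove Theorem \ref{yau_thm} at all --- it quotes it from \cite{Yau} --- and instead establishes the sharper quantitative statement of Theorem \ref{yau_inequality}, whose proof proceeds by integral geometry: one writes $\left(\int_{\Hcal}|w(M,p)|\,dVol_{\Hcal}\right)^{2}$ as a double integral over the space of geodesics via Santal\'o's formula (Proposition \ref{crofton_santalo}), converts it to an integral over pairs of chord endpoints on $M=\partial\Omega$, and controls the volume element along chords by the Bishop--G\"unther--Gromov comparison (Lemma \ref{bg_lemma}); positivity of the resulting defect $\Psi^{n}_{\Kcal}(r,|\nabla r|)$ then yields the strict inequality (\ref{yau_thm_eqn}) and, beyond that, a sharp measure of how far $\Omega$ is from a hyperbolic ball. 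Your approach --- Hessian comparison for the distance function from a basepoint $o\notin\overline{\Omega}$ giving $\Delta r\geq(n-1)\sqrt{|\Kcal|}\coth(\sqrt{|\Kcal|}\,r)>(n-1)\sqrt{|\Kcal|}$, followed by the divergence theorem and Cauchy--Schwarz on $\partial\Omega$ --- is shorter and more elementary, uses only the correct (lower-bound) direction of the Hessian comparison under an upper curvature bound, and cleanly avoids the singularity of $r$ by placing $o$ outside $\overline{\Omega}$; its only cost is that it produces no information about the equality defect, which is precisely what the paper's chord-based argument is designed to capture. The one hypothesis you should state explicitly is enough boundary regularity for the divergence theorem (smooth or at least Lipschitz $\partial\Omega$), which is implicit in the paper's framing of $\partial\Omega$ as an immersed hypersurface.
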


For a curve $M$ in a surface of non-positive curvature, Theorem \ref{main_thm_1}.B has a geometric interpretation which is related to Theorem \ref{yau_inequality}.  This and the $2$-dimensional case of Theorem \ref{main_thm_1} have also been proven by Howard \cite{Ho}.  We record these results in the following: 

\begin{theorem}[see also \cite{Ho}]
\label{Howard_cor}
Let $M$ be a closed curve of length $L$, not necessarily simple, in a complete, simply-connected surface $\Hcal^{2}$, with curvature of $\Hcal^{2}$ bounded above by $\mathcal{K} \leq 0$.  Then: 

\begin{equation}
\label{Howard_cor_eqn}
\displaystyle 4\pi \int\limits_{\Hcal} w(M,p)^{2} dA_{\Hcal} + |\mathcal{K}|\left( \int\limits_{\Hcal} |w(M,p)| \ dA_{\Hcal} \right)^{2} \leq L^{2}. \medskip 
\end{equation}
	
Equality holds if and only if $M$ is the boundary, possibly with multiplicity as above, of a domain isometric to a disk with constant curvature $\mathcal{K}$. \end{theorem}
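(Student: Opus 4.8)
The plan is to derive this two-dimensional statement from the $n=2$ case of Theorem~\ref{main_thm_1}.B together with a single global estimate that accounts for the curvature term $|\mathcal{K}|\left(\int_{\Hcal}|w|\,dA_{\Hcal}\right)^{2}$. If $\mathcal{K}=0$ this term vanishes and the assertion is exactly the $n=2$ case of Theorem~\ref{main_thm_1} (the Banchoff--Pohl inequality), so I assume $\mathcal{K}<0$. Specializing (\ref{main_thm_1_B_eqn}) to $n=2$, one has $n\Sigma_{n-1}=2\Sigma_{1}=4\pi$ and $\sn_{\Kcal}^{n-1}=\sn_{\Kcal}$, so the integrand on the right collapses to $1-\sin\sigma_{1}\sin\sigma_{2}\,\frac{\sn_{\Kcal}(r)-r}{\sn_{\Kcal}(r)}$. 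Since $M$ has length $L$ we have $\iint_{M\times M}1\,dA_{M\times M}=L^{2}$, and Theorem~\ref{main_thm_1}.B reads
\[
4\pi\int_{\Hcal} w(M,p)^{2}\,dA_{\Hcal}\ \le\ L^{2}-\iint_{M\times M}\sin\sigma_{1}\sin\sigma_{2}\,\frac{\sn_{\Kcal}(r)-r}{\sn_{\Kcal}(r)}\,dA_{M\times M}.
\]
Comparing with (\ref{Howard_cor_eqn}), the theorem reduces to the single sharp estimate
\begin{equation*}
|\mathcal{K}|\Big(\int_{\Hcal}|w(M,p)|\,dA_{\Hcal}\Big)^{2}\ \le\ \iint_{M\times M}\sin\sigma_{1}\sin\sigma_{2}\,\frac{\sn_{\Kcal}(r)-r}{\sn_{\Kcal}(r)}\,dA_{M\times M}.\tag{$\star$}
\end{equation*}

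The estimate $(\star)$ is the heart of the matter and the step I expect to be the main obstacle. It is not a pointwise consequence of the results already proved: along $M\times M$ one has $|\nabla_{x} r|=\cos\sigma_{1}$ and $|\nabla_{y} r|=\cos\sigma_{2}$, hence $|\nabla r|^{2}=\cos^{2}\sigma_{1}+\cos^{2}\sigma_{2}$, so $\sin\sigma_{1}\sin\sigma_{2}$ is genuinely a function of the two angles separately and not of $|\nabla r|$ alone (on the extremal geodesic circle the angles coincide, $\sigma_{1}=\sigma_{2}$). Consequently $(\star)$, which is sharp on the geodesic circle, cannot be reduced to a pointwise inequality between the integrand of (\ref{main_thm_1_B_eqn}) and the function $\Psi_{\Kcal}^{2}(r,|\nabla r|)$ of (\ref{yau_inequality_eqn}), and cannot be obtained by merely adding Theorem~\ref{main_thm_1}.B to the $n=2$ case of Theorem~\ref{yau_inequality}; it must be established globally. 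My plan for $(\star)$ is an integral-geometric identity followed by Cauchy--Schwarz: express the enclosed area $\int_{\Hcal}|w(M,p)|\,dA_{\Hcal}$ as an integral over the secant domain $M\times M$ --- this is where the secant mapping $l$ of Theorem~\ref{main_thm_2} and its area density enter, the relevant Jacobian being $\sin\sigma_{1}\sin\sigma_{2}$ times a function of $r$ --- and then apply Cauchy--Schwarz against the density $\sin\sigma_{1}\sin\sigma_{2}\,\frac{\sn_{\Kcal}(r)-r}{\sn_{\Kcal}(r)}$. The constant $|\mathcal{K}|$ should emerge from Rauch/Jacobi-field comparison for curvature bounded above by $\mathcal{K}$, the factor $\sn_{\Kcal}(r)-r$ carrying exactly the deviation of $\sn_{\Kcal}(r)$ from $r$ responsible for $|\mathcal{K}|$. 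This is the point at which Theorem~\ref{yau_inequality} intervenes: for a curve $Vol(M)=L$, so the left-hand side of (\ref{yau_inequality_eqn}) is $L^{2}-|\mathcal{K}|\left(\int_{\Hcal}|w|\,dA_{\Hcal}\right)^{2}$, and the proof of $(\star)$ should run in parallel with --- indeed be a repackaging of --- the $n=2$ case of that theorem.

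Finally I would treat equality. Chaining the specialized Theorem~\ref{main_thm_1}.B with $(\star)$, equality in (\ref{Howard_cor_eqn}) forces equality in both. Equality in Theorem~\ref{main_thm_1}.B already forces $f(M)$ to bound, possibly with multiplicity, a domain isometric to a disk of constant curvature $\mathcal{K}$; on such a geodesic circle $\sigma_{1}=\sigma_{2}$ and the secant geometry is rotationally symmetric, which is exactly the equality configuration of the Cauchy--Schwarz step in $(\star)$. Conversely the round disk makes both inequalities equalities, yielding the stated characterization. The analytic core to be supplied is therefore the identity realizing $\int_{\Hcal}|w|\,dA_{\Hcal}$ as a secant integral and the verification that the ensuing Cauchy--Schwarz is sharp with constant $|\mathcal{K}|$.
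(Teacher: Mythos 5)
Your proposal is correct and follows essentially the same route as the paper: the paper likewise adds the $n=2$ specialization of (\ref{main_thm_1_B_eqn}) to exactly your estimate $(\star)$, which it obtains not by a new Cauchy--Schwarz argument but as the intermediate inequality (\ref{int_sq_pf_eq_9}) from the proof of Theorem \ref{yau_inequality} (using $W_{\Kcal}^{2}(r)=\tfrac{1}{|\Kcal|}(\sn_{\Kcal}(r)-r)$, via Santal\'o's formula and Lemma \ref{bg_lemma}) --- precisely the ``repackaging'' of that proof, retaining the density $\sin(\sigma_{1})\sin(\sigma_{2})$, that you anticipate. The equality case is likewise reduced to equality in Theorem \ref{main_thm_1}.B.
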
  

This gives an extension of the $2$-dimensional generalized Cartan-Hadamard conjecture, which says that for a domain $\Omega$ in a surface $\Hcal^{2}$ as above, whose area is $A$ and perimeter has length $L$, 

\begin{equation}
\label{2-dim_CH_ineq}
\displaystyle 4\pi A + |\mathcal{K}| A^{2} \leq L^{2}. \bigskip
\end{equation}

Our proof of Theorem \ref{Howard_cor} is in a sense the converse of Howard's: Howard proves Theorem \ref{Howard_cor} by using the classical inequality (\ref{2-dim_CH_ineq}) to prove sharp Sobolev inequalities for functions of bounded variation in Cartan-Hadamard surfaces, which give (\ref{Howard_cor_eqn}) when applied to the function $w(M,p)$.  We will prove Theorem \ref{Howard_cor} directly, with the isoperimetric inequality (\ref{2-dim_CH_ineq}) as a special case.  We note that Howard's results cover a more general situation than in Theorem \ref{Howard_cor} -- in particular, his results apply to surfaces with a positive upper curvature bound, subject to some additional hypotheses which are necessary in that setting.  \\ 

In \cite{BP}, building on earlier work of Pohl \cite{Pohl}, Banchoff and Pohl proved the following generalization of the classical isoperimetric inequality in the plane.  Much of our initial motivation for the results in this paper came from a desire to extend the Banchoff-Pohl inequality to spaces of non-positive curvature:  

\begin{theorem}[The Banchoff-Pohl Inequality, \cite{BP}]
\label{bp_ineq}
	
Let $f: M^{m} \rightarrow \R^{n}$ be an immersion of a closed, oriented manifold $M$ in Euclidean space.  Let ${\bf H}_{n-m-1}$ be the space of affine $(n-m-1)$-planes in $\R^{n}$ and $dE$ a measure on ${\bf H}_{n-m-1}$ which is invariant under the action of the Euclidean isometry group.  For $x, y \in M$, let $r(x,y)$ be the distance between $f(x)$ and $f(y)$ in $\R^{n}$, and for $E \in {\bf H}_{n-m-1}$, let $\lambda(M,E)$ be the linking number of $M$ about $E$.  Then:  
	
\begin{equation}
\label{bp_ineq_eqn}
\displaystyle (m+1) \Sigma_{m} K_{n,m} \int\limits_{{\bf H}_{n-m-1}} \lambda^{2}(M,E) \ dE \leq \iint\limits_{M \times M} \frac{1}{r^{m-1}} \ dVol_{M \times M}, \medskip 
\end{equation}
where $K_{n,m}$ is a constant depending only on $n$ and $m$.  Equality holds precisely if $M$ is a round sphere, or several coincident round spheres with the same orientation or, if $m=1$, one or several coincident circles, each traversed in the same direction some number of times. \end{theorem}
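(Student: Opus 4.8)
The plan is to follow the integral-geometric strategy underlying Theorem \ref{main_thm_2}, by realizing the linking number as a degree and then converting $\int_{{\bf H}_{n-m-1}}\lambda^{2}(M,E)\,dE$ into an integral over $M \times M$ through a Crofton-type formula. First I would fix a generic plane $E \in {\bf H}_{n-m-1}$ and split $\R^{n}=E_{0}\oplus E_{0}^{\perp}$, where $E_{0}$ is the linear subspace parallel to $E$, so that $\dim E_{0}^{\perp}=m+1$. Radial projection away from $E$,
\[
\pi_{E}(p)=\frac{p-\mathrm{proj}_{E}(p)}{|p-\mathrm{proj}_{E}(p)|}\in \Sph^{m}\subset E_{0}^{\perp},
\]
is defined on $\R^{n}\setminus E$. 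Since $M$ is closed, oriented and $m$-dimensional, a dimension count shows that the planes meeting $f(M)$ form a $dE$-null set, so for generic $E$ the composite $\pi_{E}\circ f$ is well-defined and $\lambda(M,E)=\deg(\pi_{E}\circ f)=\int_{M}(\pi_{E}\circ f)^{*}\omega_{m}$, where $\omega_{m}$ is the volume form on $\Sph^{m}$ normalized to total mass $1$.

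Next I would square this expression and apply Fubini with the invariant measure $dE$, obtaining
\[
\int_{{\bf H}_{n-m-1}}\lambda(M,E)^{2}\,dE=\iint_{M\times M}\left(\int_{{\bf H}_{n-m-1}}(\pi_{E}\circ f)^{*}\omega_{m}\big|_{x}\wedge(\pi_{E}\circ f)^{*}\omega_{m}\big|_{y}\,dE\right),
\]
where the integrand is the $2m$-form on $M\times M$ built from two $m$-forms in the $x$- and $y$-variables. The inner integral over the homogeneous space ${\bf H}_{n-m-1}$ can then be evaluated using the invariance of $dE$ under the Euclidean group: the result must be a universal constant times a canonical $2m$-form on $M\times M$ determined by the chord joining $f(x)$ to $f(y)$ and by the tangent planes $df_{x}(T_{x}M)$ and $df_{y}(T_{y}M)$. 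The natural candidate, consistent with the Euclidean specialization of Theorem \ref{main_thm_2}, is that this kernel is $c_{n,m}\,r\,l^{*}(dI)^{m}$ for the secant map $l$ and the symplectic form $dI$ on the space of lines, so that $\int_{{\bf H}_{n-m-1}}\lambda^{2}(M,E)\,dE$ becomes a fixed multiple of $\iint_{M\times M} r\,l^{*}(dI)^{m}$.

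With such an identity in hand, the inequality (\ref{bp_ineq_eqn}) would follow directly from the Euclidean case of Theorem \ref{main_thm_2}, namely $(m+1)D_{m}\iint_{M\times M} r\,l^{*}(dI)^{m}\le \iint_{M\times M} r^{-(m-1)}\,dVol_{M\times M}$, once the constants are matched to produce the factor $(m+1)\Sigma_{m}K_{n,m}$. Alternatively one can run the kinematic integral explicitly, in which case the kernel appears in the form $\kappa(x,y)\,r^{-(m-1)}$ with an angle-determinant factor satisfying $|\kappa|\le 1$, and integrating the pointwise bound $\kappa\le 1$ gives the inequality at once. The equality discussion then comes from this last step: $\kappa\equiv 1$ forces every secant configuration, and hence the Gauss data along $M$, to coincide with those of a round sphere, yielding that $f(M)$ is a round sphere traversed with multiplicity (or coincident circles when $m=1$). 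I expect the genuine obstacle to lie entirely in the kinematic kernel: carrying out the integration over ${\bf H}_{n-m-1}$, extracting the sharp constant $K_{n,m}$, and pinning down both the pointwise bound $|\kappa|\le 1$ and its equality locus. Once the kernel is identified, the remaining steps are formal.
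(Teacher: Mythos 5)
This theorem is not proved in the paper: it is quoted from Banchoff--Pohl, and the paper's only argument concerning it is the remark (after the statement) that, \emph{granted} the identity
\begin{equation*}
(m+1)\Sigma_{m}K_{n,m}\int_{{\bf H}_{n-m-1}}\lambda^{2}(M,E)\,dE \;=\; (m+1)D_{m}\iint_{M\times M} r\,l^{*}(dI)^{m},
\end{equation*}
which the paper attributes to Banchoff and Pohl's work, the inequality becomes the Euclidean case of Theorem \ref{main_thm_2}. Your proposal reproduces exactly this reduction, so the second half of your argument (applying Theorem \ref{main_thm_2} and inheriting its equality analysis) is sound and matches the paper's point of view. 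The problem is that the identity itself --- which you correctly flag as ``the genuine obstacle'' --- is the entire mathematical content separating the cited theorem from Theorem \ref{main_thm_2}, and your proposed derivation of it does not go through as stated. The invariance of $dE$ under the Euclidean group only tells you that the kinematic kernel $\int_{{\bf H}_{n-m-1}}(\pi_{E}\circ f)^{*}\omega_{m}|_{x}\wedge(\pi_{E}\circ f)^{*}\omega_{m}|_{y}\,dE$ is an invariant double $m$-form of the configuration $\bigl(f(x),f(y),df_{x}(T_{x}M),df_{y}(T_{y}M)\bigr)$; but the space of such invariants is far from one-dimensional (e.g.\ $\sin(\sigma_{1})\sin(\sigma_{2})\,r^{-(m-1)}dVol_{M\times M}$, $\cos(\sigma_{1})\cos(\sigma_{2})\,r^{-(m-1)}dVol_{M\times M}$, and the various determinant factors appearing in Propositions \ref{key_prop} and \ref{key_prop_2} are all invariant), so ``the natural candidate'' $c_{n,m}\,r\,l^{*}(dI)^{m}$ is a guess rather than a conclusion. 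Pinning down the kernel, with its sign and the sharp constant $K_{n,m}$, is precisely what Pohl's and Banchoff--Pohl's papers do, and they do it by a different route: expressing $\lambda(M,E)$ as an intersection number and working on the secant manifold with Stokes' theorem, not by Fubini on the squared degree integral.

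Two further points you would need to address even within your own scheme. First, the Fubini step requires absolute integrability of the double form over ${\bf H}_{n-m-1}\times M\times M$; the form $(\pi_{E}\circ f)^{*}\omega_{m}$ blows up like $\mathrm{dist}(f(x),E)^{-m}$ as $E$ approaches $f(x)$, and while a codimension count makes convergence plausible (compare Proposition \ref{finiteness}), it is not automatic and must be checked before the interchange of integrals is legitimate. Second, $r\,l^{*}(dI)^{m}$ is not a positive multiple of $dVol_{M\times M}$, so matching your kernel to it involves tracking the sign $(-1)^{\binom{m+1}{2}+1}$ in $D_{m}$; your sketch of the equality case via a pointwise bound $|\kappa|\le 1$ presupposes this bookkeeping has already been done. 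In short: the skeleton is the right one and agrees with how the paper situates the theorem, but the load-bearing identity is asserted rather than proved, and the invariance argument offered in its place is insufficient.
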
  

When $M$ is a simple closed curve of length $L$ in the plane, enclosing a domain of area $A$, the Banchoff-Pohl inequality gives the classical isoperimetric inequality $L^{2} \geq 4\pi A$.  When the Cartan-Hadamard manifold $\Hcal$ in Theorems \ref{main_thm_1} and \ref{main_thm_2} is $n$-dimensional Euclidean space, it follows from Banchoff and Pohl's work that the left-hand side of (\ref{bp_ineq_eqn}) is equal to the integral $D_{m} \iint_{M \times M} r \ l^{*}(dI)^{m}$ which is bounded above in Theorem \ref{main_thm_2}, up to a positive constant depending only on $n$ and $m$, so that Theorem \ref{bp_ineq} becomes a special case of Theorem \ref{main_thm_2}.  So, although the result usually does not have an interpretation in terms of linking numbers as in Theorem \ref{bp_ineq}, Theorem \ref{main_thm_2} gives a sharp extension of the Banchoff-Pohl inequality to Cartan-Hadamard manifolds in complete generality.  Moreover, when the ambient space $\Hcal$ is a hyperbolic space, Theorem \ref{main_thm_2}.B implies a theorem of Gysin \cite{Gy} which gives a sharp extension of the Banchoff-Pohl inequality to spaces of constant curvature.  Theorems \ref{main_thm_1} and \ref{main_thm_2} therefore show that, although the Cartan-Hadamard and generalized Cartan-Hadamard conjectures are currently open in all dimensions $n \geq 5$, the equivalent conjectures for the Banchoff-Pohl inequality are true. \\ 

There are several questions related to the results of this paper and the Banchoff-Pohl inequality that we believe it may be interesting to study in future work.  One of the most natural seems to be whether, for a codimension $2$ submanifold $M^{n-2}$ of a Cartan-Hadamard manifold $\Hcal^{n}$, the Banchoff-Pohl inequality holds for the linking number of $M$ about geodesics in $\Hcal$ in the following sense: 

\begin{question}
\label{linking_question}
Let $f:M^{n-2} \rightarrow \Hcal^{n}$ be an immersion of a closed, oriented manifold $M$ of codimension $2$ in a Cartan-Hadamard manifold $\mathcal{H}$.  Let $\Gcal$ be the space of geodesics in $\Hcal$, as in Definition \ref{space_of_G}, and let $dVol_{\Gcal}$ be the canonical measure on $\Gcal$, as in Definition \ref{G_measure}.  Let $\lambda(M,\gamma)$ be the linking number of $M$ about an oriented geodesic $\gamma$ in $\Hcal$ and let $K_{n,n-2}$ be the constant in Theorem \ref{bp_ineq}.  Is it the case that:  

\begin{equation}
\label{conjectured_ienquality}
\displaystyle (n-1) \Sigma_{n-2} K_{n,n-2} \int\limits_{\Gcal} \lambda^{2}(M,\gamma) dVol_{\Gcal} \leq \iint\limits_{M \times M} \frac{1}{r^{n-3}} dVol_{M \times M}, \medskip 
\end{equation}
with equality if and only if $f(M)$ is the boundary of an embedded, totally geodesic submanifold isometric to a disk in $\R^{n-1}$? \end{question}  

More precisely, when $M$ is an immersed, codimension $2$ submanifold of a Cartan-Hadamard manifold $\Hcal$, it would be natural to conjecture that $(n-1) \Sigma_{n-2} K_{n,n-2} \int_{\Gcal} \lambda^{2}(M,\gamma) dVol_{\Gcal}$ is equal to the invariant $D_{m}\iint_{M \times M} r \ l^{*}(dI)^{m}$ which is bounded above in Theorem \ref{main_thm_2}.  If $M$ bounds an embedded, orientable hypersurface, this should follow from an adaptation of Pohl's and Banchoff-Pohl's arguments in \cite{Pohl, BP}.  In particular, for $3$-dimensional Cartan-Hadamard manifolds $\Hcal^{3}$, this should follow immediately for embedded closed curves $M^{1}$ in $\Hcal$ via the existence of a Seifert surface for $M$, and then for all closed curves in $\Hcal^{3}$ by the genericity of embeddedness among smooth immersions $f:M^{1} \rightarrow \Hcal^{3}$.  In fact, near the completion of this work, we learned that Teufel has proven the $3$-dimensional case of this result in a study of integral geometry in Riemannian manifolds \cite{Te}.  Teufel also proves a result similar to Theorem \ref{generalization}.  In this result, however, one takes the maximum number of transverse intersection points of a geodesic $\gamma$ in $\Hcal$ with a hypersurface $M$, rather than the average $\Acal(f)$ in Theorem \ref{generalization}. \\ 

In the setting where a codimension $2$ submanifold $M^{n-2}$ of $\Hcal^{n}$ does not bound such a spanning hypersurface, it may still be possible to adapt Banchoff and Pohl's arguments to establish the conjectured identity for $\int_{\Gcal} \lambda^{2}(M,\gamma) dVol_{\Gcal}$ and the inequality (\ref{conjectured_ienquality}).  However there is another approach to this question that we believe might work well and might be interesting in its own right.  We will discuss this in Remark \ref{current_remark}. \\  

We will finish this introduction by briefly giving an outline of the rest of the paper and references to additional background reading about isoperimetric inequalities, the Cartan-Hadamard conjecture and the Banchoff-Pohl inequality: \\  

In Section \ref{secants}, we will define the space of geodesics in a Cartan-Hadamard manifold and the secant space of a closed, oriented manifold.  We will describe the secant mapping which is induced by an immersion of a submanifold $M$ into a Cartan-Hadamard manifold $\Hcal$ and establish some of its basic geometric properties.  To our knowledge, this is the first time the secant mapping has been defined and studied in Cartan-Hadamard manifolds, and we believe this development might be of independent interest. \\ 

In Section \ref{bp_in_ch}, we will prove Theorems \ref{main_thm_1} and \ref{main_thm_2}.  We will show that Theorem \ref{main_thm_1} follows from Theorem \ref{main_thm_2} by of a formula for the invariant $\int_{\Hcal}w(M,p)^{2} dp$ which we will establish in Proposition \ref{hyp_main_thm}.  We will also prove that the upper bound $\iint_{M \times M} \frac{1}{r^{m-1}} dVol_{M\times M}$ in Theorems \ref{main_thm_1} and \ref{main_thm_2} is finite. \\  

In Section \ref{isoperimetric_inequalities}, we will prove Theorems \ref{yau_inequality}, \ref{Howard_cor} and \ref{generalization}.  We note that Theorem \ref{yau_inequality} implies that the Cartan-Hadamard conjecture holds for domains $\Omega$ in Cartan-Hadamard manifolds with a negative upper curvature bound if the volume of $\Omega$ is sufficiently large, cf. \cite{Kl}.  A more detailed study of the estimate in Theorem \ref{yau_inequality} may therefore have some applicability to the Cartan-Hadamard conjecture.  For convex, embedded hypersurfaces $M^{n-1} \subseteq \Hcal^{n}$, Theorem \ref{generalization} coincides with Croke's isoperimetric inequality (\ref{croke_inequality}).  However if $M$ is an embedded hypersurface but is not convex, $A(f) > 1$ and Croke's inequality gives a stronger result than Theorem \ref{generalization}.  In Remark \ref{croke_remark}, we will describe a possible strengthening of Theorem \ref{generalization} which we believe would be a natural goal for future work. \\ 

Osserman's survey article \cite{Os} gives a history of the isoperimetric inequality and its influence in geometry, analysis and physics.  The Cartan-Hadamard conjecture has appeared in work of Aubin \cite{Aub1}, Burago and Zalgaller \cite{BZ1} and Gromov \cite{Gr1}, and \cite{KK1,GS1} give histories of the Cartan-Hadamard conjecture.  \cite{KK1} also gives partial results on the generalized Cartan-Hadamard conjecture in dimension $4$ and \cite{GS1} gives results on the total curvature of level sets in Riemannian manifolds which are related to the proof of the $3$-dimensional generalized Cartan-Hadamard conjecture in \cite{Kl}.  The results in \cite{HM1} give a sharp, quantitative version of the Cartan-Hadamard conjecture in dimension $2$, and \cite{RS1,Sch1} give additional proofs of the Cartan-Hadamard conjecture in dimension $3$.  

\subsection*{Acknowledgements:} I am very happy to thank Chris Croke, Joe Fu and Rob Kusner for many helpful conversations about this work and Peter McGrath for his feedback about an early draft of this paper.

%%%%%%%%%%%%%%%%%%%%%%%%%%%%%%%%%%%%%%%%%%%%%%%%%%%%%%%%%%%%%%%%%%%%%%%%%%%%%%%%%%%%%%%%%%%%%%%%%%%%%%%%%%%%%%%%%%%%%%%%%

\section{The Space of Geodesics and the Secant Mapping}
\label{secants}

In this section, we will define the space of geodesics $\Gcal$ in a Cartan-Hadamard manifold $\Hcal$, the secant space $S(M)$ of a closed, oriented manifold $M$ and the secant mapping $l:S(M) \rightarrow \Gcal$ which is induced by an immersion $f:M \rightarrow \Hcal$.  We will establish several results that form the basis of the proofs of our main theorems in Sections \ref{bp_in_ch} and \ref{isoperimetric_inequalities}. \\    

We will explain how in any Cartan-Hadamard manifold $\Hcal$, the family of oriented geodesics is canonically a symplectic manifold, of dimension $2n-2$ if $\Hcal$ is $n$-dimensional.  We begin by defining the space of geodesics as the quotient of the unit tangent bundle $U(\Hcal)$ of $\Hcal$ by the geodesic flow: 

\begin{definition}
\label{space_of_G}
Let $\Hcal$ be a Cartan-Hadamard manifold, $U(\Hcal)$ its unit tangent bundle and $\zeta^{t}$ the $\R$-action on $U(\Hcal)$ given by the geodesic flow; that is, letting $\vec{u}$ be a unit tangent vector to $\Hcal$ and $\gamma_{\vec{u}}$ the geodesic in $\Hcal$ with $\gamma_{\vec{u}}'(0) = \vec{u}$, $\zeta^{t}(\vec{u}) = \gamma_{\vec{u}}'(t)$. \\  

$\Gcal = U(\Hcal) / \zeta$ is the space of oriented geodesics of $\Hcal$.  We let $\Gamma: U(\Hcal) \rightarrow \Gcal$ denote the quotient mapping.  \end{definition}  

The following gives a natural homeomorphism between $\Gcal$ and the tangent bundle to the unit sphere $U_{x}\Hcal$ in any tangent space $T_{x}\Hcal$ -- in particular, this result implies that $\Gcal$ is canonically a $(2n-2)$-dimensional smooth manifold, diffeomorphic to the tangent bundle of the $(n-1)$-sphere: 

\begin{prop}
\label{G_can_diffeom}
Given an oriented geodesic $\gamma$ in a Cartan-Hadamard manifold $\Hcal$ and $p \in \Hcal$, let $b(\gamma,p)$ be the (necessarily unique) point on $\gamma$ nearest to $p$, let $\vec{v}(\gamma,p)$ be $Exp_{p}^{-1}(b(\gamma,p))$, let $\vec{w}(\gamma,p)$ be the pre-image of $\gamma'$ at $b(\gamma,p)$ via $dExp_{p}$ and the identification of $T_{p}\Hcal$ with $T_{\vec{v}(\gamma,p)}T_{p}\Hcal$, and let $\vec{u}(\gamma,p)$ be the unit vector in $T_{p}\Hcal$ parallel to $\vec{w}$.  Let $T(U_{p}\Hcal)$ be the tangent bundle to the unit sphere in $T_{p}\Hcal$. \\ 

Then by the natural identification of $T(U_{p}\Hcal)$ with $\lbrace (\vec{u}, \vec{v}) \in U_{p}\Hcal \times T_{p}\Hcal : \vec{v} \in \vec{u}^{\perp} \rbrace$, $\vec{v}(\gamma,p)$ belongs to $T_{\vec{u}(\gamma,p)}(U_{p}\Hcal)$, and the mapping $\Gcal \rightarrow T(U_{p}\Hcal)$ sending $\gamma$ to $\left( \vec{u}(\gamma,p), \vec{v}(\gamma,p) \right)$ is a homeomorphism. \end{prop}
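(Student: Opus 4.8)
The plan is to exhibit an explicit inverse to the map $\Phi\colon \gamma \mapsto (\vec u(\gamma,p),\vec v(\gamma,p))$ and to check that both $\Phi$ and this inverse are continuous. The first task is to verify that the foot point $b(\gamma,p)$, and hence $\vec v(\gamma,p)$, is well defined. Since $\Hcal$ is Cartan--Hadamard, $\exp_p$ is a diffeomorphism and $q \mapsto \tfrac12 d(p,q)^{2}=\tfrac12|\exp_p^{-1}(q)|^{2}$ is smooth with Hessian bounded below by the metric $g$ (by comparison with Euclidean space), hence strictly convex. Restricting to $\gamma$, the function $s \mapsto \tfrac12 d(p,\gamma(s))^{2}$ is strictly convex and proper, so it attains its minimum at a unique parameter; this gives the unique nearest point $b(\gamma,p)$ and shows $\vec v(\gamma,p)=\exp_p^{-1}(b(\gamma,p))$ is well defined. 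Because $b(\gamma,p)$ depends only on the oriented geodesic, $\Phi$ descends to $\Gcal$.

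Next I would establish the orthogonality claim $\vec v(\gamma,p)\perp\vec u(\gamma,p)$. At the minimizing parameter the first variation of $s\mapsto \tfrac12 d(p,\gamma(s))^{2}$ vanishes, which says exactly that $\gamma'$ is orthogonal at $b(\gamma,p)$ to the radial direction $c'$, where $c(t)=\exp_p\!\big(t\,\vec v/|\vec v|\big)$ is the radial geodesic from $p$ to $b(\gamma,p)$. By the Gauss lemma, $d\exp_p|_{\vec v}$ carries $\R\vec v\subseteq T_{\vec v}(T_p\Hcal)$ to $\R c'$ and the orthogonal complement $\vec v^{\perp}$ to $(c')^{\perp}$. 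Since $\gamma'\in(c')^{\perp}$, its preimage $\vec w(\gamma,p)$ lies in $\vec v^{\perp}$, and so does the unit vector $\vec u(\gamma,p)$ parallel to it. Thus $(\vec u(\gamma,p),\vec v(\gamma,p))$ is a genuine element of $T(U_p\Hcal)$ under the stated identification, and the geodesics through $p$ map to the zero section.

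For the inverse I would send $(\vec u,\vec v)\in T(U_p\Hcal)$ to the oriented geodesic $\Psi(\vec u,\vec v)$ through $b:=\exp_p(\vec v)$ in the direction of $d\exp_p|_{\vec v}(\vec u)$, a nonzero vector as $d\exp_p$ is an isomorphism. To see $\Psi=\Phi^{-1}$: because $\vec u\perp\vec v$, the Gauss lemma again gives $d\exp_p|_{\vec v}(\vec u)\perp c'$, so $\gamma:=\Psi(\vec u,\vec v)$ meets the radial geodesic from $p$ orthogonally at $b$. By the strict convexity above, this first-order condition is also sufficient, so $b$ is the unique nearest point of $\gamma$ to $p$; hence $\vec v(\gamma,p)=\vec v$, and unwinding the definitions (the orientation of $\gamma$ being preserved) gives $\vec u(\gamma,p)=\vec u$. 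The reverse composition $\Phi\circ\Psi=\mathrm{id}$ follows by the same computation.

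Finally, continuity. As $\Gcal$ carries the quotient topology from $\Gamma\colon U(\Hcal)\to\Gcal$, it suffices to show the flow-invariant lift $\widetilde{\Phi}\colon U(\Hcal)\to T(U_p\Hcal)$, $\vec q\mapsto\Phi(\Gamma(\vec q))$, is continuous; it then descends to a continuous $\Phi$. This reduces to the continuous dependence of $b(\gamma,p)$ on the initial vector $\vec q$ of $\gamma$, which I expect to be the only step requiring care. I would deduce it from the strict convexity: the family $s\mapsto \tfrac12 d(p,\gamma_{\vec q}(s))^{2}$ depends continuously on $\vec q$ and is uniformly coercive and strictly convex on compact sets of initial data, so its unique minimizer varies continuously; alternatively, apply the implicit function theorem to the first-variation equation, whose $s$-derivative is the nonvanishing second variation. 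Continuity of $\Psi$ is immediate from its closed form in $\exp_p$ and $d\exp_p$. Hence $\Phi$ is a continuous bijection with continuous inverse, i.e.\ a homeomorphism.
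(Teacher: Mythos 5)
Your proof is correct. The paper states Proposition \ref{G_can_diffeom} without giving a proof, so there is nothing to compare against line by line; your route --- strict convexity and properness of $s \mapsto \tfrac{1}{2}d(p,\gamma(s))^{2}$ (Hessian comparison in nonpositive curvature) to get the unique foot point, the Gauss lemma to verify $\vec{v}(\gamma,p) \in \vec{u}(\gamma,p)^{\perp}$, the explicit inverse $\Psi$ built from $\exp_{p}$ and $d\exp_{p}$ (well defined since $d\exp_{p}$ is everywhere an isomorphism), and continuity via the quotient topology on $\Gcal$ together with continuous dependence of the minimizer on the initial vector --- is the standard argument and supplies exactly the details the paper takes for granted. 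The one point genuinely requiring care, continuity of $\vec{q} \mapsto b(\gamma_{\vec{q}},p)$, is handled correctly by your uniform-convexity (or implicit function theorem) argument.
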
 

Note that given an orientation of $\Hcal$, we can also define a diffeomorphism from $\Gcal$ to the total space of the tautological bundle over the Grassmannian of oriented hyperplanes in $T_{p}\Hcal$.  This gives an alternate formulation of Proposition \ref{G_can_diffeom} which will be helpful in some of our results, and we will assume throughout that $\Hcal$ is oriented.  \\  

The unit tangent bundle $U(\Hcal)$ carries a canonical contact form $\alpha$, whose exterior derivative gives a symplectic form on its kernel $\alpha^{\perp}$.  $\alpha$ and $d\alpha$ are invariant under the geodesic flow of $\Hcal$, and for $\vec{u} \in U(\Hcal)$, $d\Gamma : \alpha_{\vec{u}}^{\perp} \rightarrow T_{\Gamma(\vec{u})}\Gcal$ is a linear isomorphism, so $d\alpha$ descends to a canonically-defined symplectic form on $\Gcal$.  The top power of this symplectic form gives a canonical measure on $\Gcal$.  We adopt the notation of \cite{Pohl,BP} for this symplectic form and record these structures in the following: 

\begin{definition}
\label{G_form}
\label{G_measure} 

Let $dI$ be the canonical symplectic form on the space of geodesics $\Gcal$ in a Cartan-Hadamard manifold $\Hcal$.  We define the measure $dVol_{\Gcal}$ on $\Gcal$ to be that given by the following top-dimensional differential form:   

\begin{equation}
\label{G_measure_equation}
\displaystyle dVol_{\Gcal} = \text{\small $\frac{(-1)^{\binom{n}{2}}}{(n-1)!}$} \displaystyle dI^{(n-1)}. \medskip
\end{equation}
\end{definition}

The sign and constant in (\ref{G_measure_equation}) are chosen so that $\alpha \wedge \Gamma^*(dVol_{\Gcal}) = dVol_{\Hcal} \wedge dU$, where $dU$ is the fibrewise volume form on $U(\Hcal)$.  Note that $\Gamma^{*}(dI) = d\alpha$, and that for $n \equiv 0,1 \mod \ 4$ we have $(-1)^{\binom{n}{2}} = 1$ and for $n \equiv 2,3 \mod \ 4$ we have $(-1)^{\binom{n}{2}} = -1$. \\ 

Letting $F(\Hcal)$ denote the frame bundle of $\Hcal$, $U(\Hcal)$ is the base of a fibration from $F(\Hcal)$, which sends a frame $\lbrace E_{1}, E_{2}, \dots, E_{n} \rbrace$ to its first vector $E_{1}$.  We denote this $\Ecal: F(\Hcal) \rightarrow U(\Hcal)$.  Letting $\omega_{1}, \omega_{2}, \cdots, \omega_{n}$ and $\omega_{1}^{2}, \omega_{1}^{3}, \cdots, \omega_{n-1}^{n}$ be the canonically-defined dual and connection $1$-forms on $F(\Hcal)$, we have:  

\begin{equation*}
\displaystyle \Ecal^{*}(\alpha) = \omega_{1}, 
\end{equation*}
\begin{equation}
\label{cartan_eqn}
\displaystyle \Ecal^{*}(d\alpha) = d\omega_{1} = \sum\limits_{j=2}^{n} \omega_{1}^{j} \wedge \omega_{j}. \bigskip 
\end{equation}

In terms of $\omega_{1}, \omega_{2}, \cdots, \omega_{n}$ and $\omega_{1}^{2}, \omega_{1}^{3}, \cdots, \omega_{n-1}^{n}$, Definition \ref{G_measure} says:  

\begin{equation}
\label{Gvol_formula}	
\displaystyle \left( \Gamma \circ \Ecal \right)^{*} (dVol_{\Gcal}) = \omega_{2} \wedge \cdots \wedge \omega_{n} \wedge \omega_{1}^{2} \wedge \cdots \wedge \omega_{1}^{n}. \bigskip 
\end{equation}

The following formula for the measure on $\Gcal$ will be important in several of our results:   

\begin{prop}[Santal\'o's formula, see also \cite{Cr2,Cr,Sa,Be1}]
\label{crofton_santalo}
Let $f : M^{n-1} \rightarrow \Hcal^{n}$ be an immersion of an oriented hypersurface in a Cartan-Hadamard manifold.  Let $\Ucal(f)$ be the pull-back of the unit tangent bundle $U(\Hcal)$ of $\Hcal$ via $f$ and $\Upsilon : \Ucal(f) \rightarrow \Gcal$ the map which sends $\vec{u} \in U_{x}\Hcal$ to the geodesic $\gamma_{\vec{u}}$ in $\Hcal$ determined by $\vec{u}$.  Let $dVol_{\Ucal} = dVol_{M} \wedge dU$ be the volume form on $\Ucal(f)$. \\ 

Then $\Upsilon^{*}(dVol_{\Gcal})_{\vec{u}} = \langle dVol_{M_{x}}, dU_{\vec{u}} \rangle dVol_{\Ucal}$. \end{prop}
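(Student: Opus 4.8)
The plan is to reduce everything to the frame-bundle formula (\ref{Gvol_formula}) for $dVol_{\Gcal}$ and to compute the pullback $\Upsilon^{*}(dVol_{\Gcal})$ with an adapted moving frame. Since $f$ is an immersion it is locally an embedding, so the computation of the top-degree form $\Upsilon^{*}(dVol_{\Gcal})$ at a point $(x,\vec{u}) \in \Ucal(f)$ is a pointwise statement about $d\Upsilon$, and I may work locally. Write $p = f(x)$ and note that $\Upsilon = \Gamma \circ \iota$, where $\iota : \Ucal(f) \to U(\Hcal)$ sends $(x,\vec{u})$ to the vector $\vec{u} \in U_{p}\Hcal \subseteq U(\Hcal)$. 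I would first lift $\iota$ to the frame bundle: over a neighborhood of $(x,\vec{u})$ choose an orthonormal frame $\{E_{1},\dots,E_{n}\}$ of $T_{p}\Hcal$ with $E_{1} = \vec{u}$ and $E_{2},\dots,E_{n}$ spanning $\vec{u}^{\perp}$; this defines $\tilde\iota : \Ucal(f) \to F(\Hcal)$ with $\Ecal \circ \tilde\iota = \iota$. Because $\omega_{2}\wedge\cdots\wedge\omega_{n}$ and $\omega_{1}^{2}\wedge\cdots\wedge\omega_{1}^{n}$ are each invariant under the residual $SO(n-1)$-action rotating $E_{2},\dots,E_{n}$, the answer will not depend on this choice, and by (\ref{Gvol_formula}),
\begin{equation*}
\Upsilon^{*}(dVol_{\Gcal}) = \tilde\iota^{*}\big(\omega_{2}\wedge\cdots\wedge\omega_{n}\wedge\omega_{1}^{2}\wedge\cdots\wedge\omega_{1}^{n}\big).
\end{equation*}

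Next I would identify the two blocks. The solder forms satisfy $\tilde\iota^{*}\omega_{i}(\xi) = \langle E_{i}, df(\mathrm{pr}_{M*}\xi)\rangle$, since the base point of $\tilde\iota$ is $p = f(x)$, which depends only on the $M$-coordinate $x$; here $\mathrm{pr}_{M} : \Ucal(f) \to M$ is the projection. In particular $\tilde\iota^{*}\omega_{i}$ annihilates the fibre directions of $\Ucal(f)$ (those moving $\vec{u}$ with $x$ fixed), and on $M$-directions it records the orthogonal projection of $df(T_{x}M)$ onto $\vec{u}^{\perp} = \mathrm{span}(E_{2},\dots,E_{n})$. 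For the connection forms, skew-symmetry of $\omega$ gives $\nabla_{\xi}\vec{u} = \sum_{j=2}^{n}\tilde\iota^{*}\omega_{1}^{j}(\xi)\,E_{j}$; on a fibre direction $\xi$ the vector $\nabla_{\xi}\vec{u}$ is exactly the velocity of $\vec{u}$ inside the unit sphere $U_{p}\Hcal$, so $\tilde\iota^{*}(\omega_{1}^{2}\wedge\cdots\wedge\omega_{1}^{n})$ restricts on fibre directions to the fibrewise volume form $dU$.

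To assemble these, I would evaluate the pulled-back $(2n-2)$-form on a basis of $T_{(x,\vec{u})}\Ucal(f)$ consisting of $(n-1)$ horizontal lifts of a basis of $T_{x}M$ together with $(n-1)$ fibre (vertical) vectors. Because $\tilde\iota^{*}\omega_{2},\dots,\tilde\iota^{*}\omega_{n}$ vanish on all vertical vectors, the resulting $(2n-2)\times(2n-2)$ matrix is block triangular, so its determinant factors as the determinant of $(\tilde\iota^{*}\omega_{i})$ on the horizontal block times that of $(\tilde\iota^{*}\omega_{1}^{j})$ on the vertical block. The first factor is the Jacobian of the orthogonal projection $df(T_{x}M)\to\vec{u}^{\perp}$ relative to $dVol_{M}$, i.e.\ the cosine of the angle between the hyperplanes $df(T_{x}M)$ and $\vec{u}^{\perp}$, which equals $\langle \vec{\nu},\vec{u}\rangle$ for $\vec{\nu}$ the unit normal of $f(M)$ at $p$; the second factor is the sphere volume $dU$. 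Hence $\Upsilon^{*}(dVol_{\Gcal}) = \langle \vec{\nu},\vec{u}\rangle\, dVol_{M}\wedge dU$. Finally I would match this coefficient with the pairing in the statement: regarding $dVol_{M_{x}}$ and $dU_{\vec{u}}$ as decomposable $(n-1)$-vectors in $\Lambda^{n-1}T_{p}\Hcal$, their Hodge duals are the unit normals of $df(T_{x}M)$ and of $\vec{u}^{\perp}$, namely $\vec{\nu}$ and $\vec{u}$, so $\langle dVol_{M_{x}}, dU_{\vec{u}}\rangle = \langle \vec{\nu},\vec{u}\rangle$, giving the claim.

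The main obstacle I anticipate is the bookkeeping for the connection forms $\omega_{1}^{j}$ along the horizontal $(M$-$)$directions, where $\tilde\iota^{*}\omega_{1}^{j}$ need not vanish and depends on how the tautological vector $\vec{u}$ is transported as $x$ varies. The point that rescues the computation is the block-triangular structure above: the horizontal directions are already saturated by $\omega_{2}\wedge\cdots\wedge\omega_{n}$, so the horizontal components of $\omega_{1}^{2}\wedge\cdots\wedge\omega_{1}^{n}$ are wedged against $M$-direction forms already present and drop out, leaving only the fibrewise part of the connection block. The remaining care is to track the orientation and sign conventions consistently, so that the pairing $\langle dVol_{M_{x}}, dU_{\vec{u}}\rangle$ reproduces both the magnitude and the sign dictated by (\ref{G_measure_equation}) and the normalization $\alpha\wedge\Gamma^{*}(dVol_{\Gcal}) = dVol_{\Hcal}\wedge dU$.
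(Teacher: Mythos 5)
Your proposal is correct and follows exactly the route the paper takes: the paper's proof simply declares the result an immediate consequence of the frame-bundle formula (\ref{Gvol_formula}), and your computation — lifting $\Upsilon$ through $\Ecal$, observing that $\omega_{2},\dots,\omega_{n}$ kill the vertical directions so the wedge product factors into the projection Jacobian $\langle \vec{\nu},\vec{u}\rangle$ times the fibrewise form $dU$ — is precisely the detail being left implicit there. The identification of the coefficient with $\langle dVol_{M_{x}}, dU_{\vec{u}}\rangle$ via the unit normals also matches the paper's remark immediately after the proposition that the formula can be rewritten as $\Upsilon^{*}(dVol_{\Gcal})_{\vec{u}} = \langle \vec{u},\nu\rangle\, dVol_{\Ucal}$.
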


\begin{proof} This is an immediate consequence of the formula (\ref{Gvol_formula}) for $dVol_{\Gcal}$. \end{proof}

Note that, letting $\nu$ be a unit normal vector field to $M$ chosen so that $\nu^{\flat} \wedge dVol_{M} = dVol_{\Hcal}$, Proposition \ref{crofton_santalo} can also be written $\Upsilon^{*}(dVol_{\Gcal})_{\vec{u}} = \langle \vec{u}, \nu \rangle dVol_{\Ucal}$.  For oriented submanifolds $M_{1}^{k}, M_{2}^{n-k}$ of complementary dimensions in a Cartan-Hadamard manifold $\Hcal^{n}$ and a transverse intersection point $x \in M_{1} \cap M_{2}$, we will write $i_{x}(M_{1},M_{2})$ for the oriented intersection number of $M_{1}$ and $M_{2}$ at $x$: $i_{x}(M_{1},M_{2}) = +1$ if $ \langle dVol_{\Hcal} , dVol_{M_{1}} \wedge dVol_{M_{2}} \rangle > 0$ and $i_{x}(M_{1},M_{2}) = -1$ if $\langle dVol_{\Hcal} , dVol_{M_{1}} \wedge dVol_{M_{2}} \rangle < 0$.  For $\vec{u} \in U_{x}\Hcal$, the sign of $det(d\Upsilon)_{\vec{u}}$ is therefore equal to $i_{x}(\gamma_{\vec{u}},M)$.  In terms of the positive measures induced by the volume form on $\Gcal$ and the top-dimensional form $\langle \vec{u}, \nu \rangle dVol_{\Ucal}$ on $\Ucal(f)$, Proposition \ref{crofton_santalo} says that $\Upsilon$ is a measure-preserving mapping from $\Ucal(f)$ to $\Gcal$. \\ 

For an immersion $f : M^{n-1} \rightarrow \Hcal^{n}$ of an oriented hypersurface as in Proposition \ref{crofton_santalo}, we define the invariant $\Acal(f)$ in Theorem \ref{generalization} as follows: 

\begin{definition}
\label{average}
Let $f : M^{n-1} \rightarrow \Hcal^{n}$ be as in Proposition \ref{crofton_santalo}, let $\nu$ be the unit normal vector field to $M$ with $\nu^{\flat} \wedge dVol_{M} = dVol_{\Hcal}$ and for $\vec{u} \in U_{x}\Hcal$, let $\rho_{\vec{u}}$ be the geodesic ray in $\Hcal$ determined by $\vec{u}$.  For $x\in M$, let $\mu(x)$ be: 

\begin{equation*}
\label{mass_function_eqn}
\displaystyle \mu(x) = c_{n} \int\limits_{U_{x}\Hcal} |\langle \vec{u}, \nu \rangle|^{\frac{n}{n-2}} \chi(\lbrace y \in M : f(y) \in \rho_{\vec{u}}, \ i_{x}(\gamma_{\vec{u}},M) \neq i_{y}(\gamma_{\vec{u}},M) \rbrace) d\vec{u}, \medskip %\displaystyle \mu(x) = \int\limits_{U_{x}\Hcal} (\langle \vec{u}, \nu \rangle)^{\frac{n}{n-2}} \chi(\lbrace y \in M : f(y) \in \rho_{\vec{u}}, f(y) > f(x), i_{x}(\gamma_{\vec{u}},M) \neq i_{y}(\gamma_{\vec{u}},M) \rbrace) d\vec{u}.
\end{equation*}
where $k_{n}$ is: 
\begin{equation}
\label{mass_function_constant}
\displaystyle \frac{2}{\int\limits_{S^{n-1}} |\langle \vec{u}, \nu \rangle|^{\frac{n}{n-2}} d\vec{u}}. \medskip 
\end{equation}

Let $\Acal(f)$ be the average of $\mu(x)$ over $M$:  
\begin{equation*}
\displaystyle \Acal(f) = \frac{1}{Vol(M)} \int\limits_{M} \mu(x) dVol_{M}.
\end{equation*}
\end{definition}

For $n=2$, the constant $C_{2}$ in Theorem \ref{generalization} is equal to $4\pi$.  For $n \geq 3$, $C_{n}$ is defined in terms of $k_{n}$:  

\begin{equation}
\label{generalization_constant}
\displaystyle C_{n} = \Sigma_{n-1}k_{n}^{\left(\frac{n-2}{n-1}\right)}. \bigskip 
\end{equation}

The following corollary of Proposition \ref{crofton_santalo} allows us to calculate the area of a compact hypersurface $M$ in a Cartan-Hadamard manifold $\Hcal$ as an integral over $\Gcal$, exactly as the area of a hypersurface in Euclidean space may be calculated as an integral over the space of lines: 

\begin{theorem}[Crofton's Formula]
\label{Crofton}
Let $f:M^{n-1} \rightarrow \Hcal^{n}$ be an immersion of a compact hypersurface $M$ in a Cartan-Hadamard manifold $\Hcal$.  Then:   

\begin{equation}
\label{Crofton_eqn}
\displaystyle \int\limits_{\Gcal} \chi(f^{-1}(\gamma)) dVol_{\Gcal} = 2B_{n-1} Vol(M). \medskip  
\end{equation}
\end{theorem}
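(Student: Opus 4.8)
The plan is to reduce the integral over $\Gcal$ to an integral over the pulled-back unit tangent bundle $\Ucal(f)$ by means of Santal\'o's formula (Proposition \ref{crofton_santalo}) together with the area formula, and then to evaluate the resulting fibrewise integral over the unit spheres $U_{f(x)}\Hcal$.

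First I would use the observation recorded after Proposition \ref{crofton_santalo} that $\Upsilon : \Ucal(f) \to \Gcal$ is measure-preserving from the positive measure $|\langle \vec u, \nu\rangle| \, dVol_{\Ucal}$ to $dVol_{\Gcal}$; equivalently, the Jacobian of $\Upsilon$ with respect to $dVol_{\Ucal}$ and $dVol_{\Gcal}$ is $|\langle \vec u, \nu\rangle|$. Since $\Ucal(f)$ and $\Gcal$ both have dimension $2n-2$, the area formula applied to the constant function $1$ gives
\begin{equation*}
\int_{\Ucal(f)} |\langle \vec u, \nu\rangle| \, dVol_{\Ucal} = \int_{\Gcal} \#\Upsilon^{-1}(\gamma) \, dVol_{\Gcal},
\end{equation*}
the fibre $\Upsilon^{-1}(\gamma)$ being finite for almost every $\gamma$ (Sard's theorem disposes of the critical values, and the tangential locus $\langle \vec u, \nu\rangle = 0$ carries no measure). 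I would then identify the fibre count: a pair $(x,\vec u) \in \Ucal(f)$ satisfies $\gamma_{\vec u} = \gamma$ exactly when $f(x)$ lies on $\gamma$ and $\vec u$ is the positively oriented unit tangent of $\gamma$ at $f(x)$, which is uniquely determined by $x$. Hence each point of $f^{-1}(\gamma)$ contributes one preimage, so $\#\Upsilon^{-1}(\gamma) = \chi(f^{-1}(\gamma))$ for a.e.\ $\gamma$, and the identity above becomes
\begin{equation*}
\int_{\Gcal} \chi(f^{-1}(\gamma)) \, dVol_{\Gcal} = \int_{\Ucal(f)} |\langle \vec u, \nu\rangle| \, dVol_{\Ucal}.
\end{equation*}

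To finish, I would use $dVol_{\Ucal} = dVol_{M} \wedge dU$ and Fubini to integrate over the fibre $U_{f(x)}\Hcal \cong S^{n-1}$ first. By rotational invariance of the sphere, the inner integral $\int_{S^{n-1}} |\langle \vec u, \nu\rangle| \, dU$ is independent of $x$ and of $\nu$, and a short computation---writing each closed hemisphere as a graph over the unit ball in $\R^{n-1}$, so that $dU$ becomes $|\langle \vec u, \nu\rangle|^{-1}$ times the Euclidean volume on that ball---shows it equals $2B_{n-1}$, twice the volume of the unit $(n-1)$-ball. Integrating the constant $2B_{n-1}$ over $M$ then yields $2B_{n-1}\,Vol(M)$, as claimed.

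The only step that requires genuine care is the application of the area formula: one must check that $\Upsilon$ is a local diffeomorphism off the measure-zero set where $\langle \vec u, \nu\rangle$ vanishes, that $\chi(f^{-1}(\gamma))$ is finite and equal to the fibre count for almost every regular value $\gamma$, and that the excluded sets contribute nothing on either side. Once this bookkeeping is in place, the remaining computation is routine.
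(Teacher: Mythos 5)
Your proposal is correct and follows essentially the same route as the paper: both pull the integral on $\Gcal$ back to $\Ucal(f)$ via Santal\'o's formula and a change of variables (your area-formula/fibre-count phrasing is equivalent to the paper's $\sum_x i_x(\gamma,M)^2 = \sum_{\vec u}\operatorname{sgn}(\det d\Upsilon_{\vec u})^2$ step), then apply Fubini and the fibrewise computation $\int_{S^{n-1}}|\langle\vec u,\nu\rangle|\,dU = 2B_{n-1}$. The only cosmetic difference is that the paper first decomposes $M$ into embedded, orientable ball-like pieces so that Proposition \ref{crofton_santalo} applies literally, whereas you implicitly rely on the fact that $|\langle\vec u,\nu\rangle|$ is independent of the choice of local orientation.
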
 

\begin{proof} Because both sides of (\ref{Crofton_eqn}) are additive over finite disjoint unions (and more generally, over unions which intersect in subsets of $M$ of measure zero) it is sufficient to prove the result for $M$ embedded in $\Hcal$ and diffeomorphic to an $(n-1)$-ball.  We can therefore suppose $M$ is oriented.  By Proposition \ref{crofton_santalo} and a change-of-variables, 

\begin{equation*}
\displaystyle \int\limits_{\Gcal} \chi(f^{-1}(\gamma)) dVol_{\Gcal} = \int\limits_{\Gcal} \scriptstyle \sum\limits_{x \in f^{-1}(\gamma)} \displaystyle \left( i_{x}(\gamma,M) \right)^{2} dVol_{\Gcal}
\end{equation*}

\begin{equation*}
\displaystyle = \int\limits_{\Gcal} \scriptstyle \sum\limits_{\vec{u} \in \Upsilon^{-1}(\gamma)} \displaystyle sgn(det(d\Upsilon)_{\vec{u}})^{2} dVol_{\Gcal} = \int\limits_{\Ucal(f)} |det(d\Upsilon)_{\vec{u}}| dVol_{\Ucal}. \bigskip 
\end{equation*}

By Fubini, this is equal to:  

\begin{align*}
\displaystyle \int\limits_{M} \int\limits_{U_{x}\Hcal} |det(d\Upsilon)_{\vec{u}}| dU dVol_{M}. \bigskip 
\end{align*}

An elementary calculation shows $\int_{U_{x}\Hcal} |det(d\Upsilon)_{\vec{u}}| dU = \frac{2\Sigma_{n-2}}{n-1} = 2B_{n-1}$.  This implies (\ref{Crofton_eqn}) for $M$ orientable and embedded, and thus by additivity for all $M$ compact and smoothly immersed in $\Hcal$. \end{proof}

We will define the secant space $S(M)$ of a closed, oriented manifold $M$ following Banchoff and Pohl and the authors cited in \cite{Pohl,BP}.  We refer to \cite[Section 4]{Pohl} and \cite{BP} for more background on the secant space.   

\begin{definition}
\label{secant_space}
Let $M^{m}$ be a closed, oriented manifold of dimension $m$.  Let $M \times M$ be the product manifold of $M$ with itself, oriented such that if $[M_{1}]$ and $[M_{2}]$ are the fundamental classes of the first and second factors respectively, $[M_{1}]\times [M_{2}]$ is the orientation class of $M \times M$.  Let $\varDelta$ be the diagonal in $M \times M$. \\ 

The secant space $S(M)$ of $M$ is the manifold $M \times M \setminus \varDelta$, together with a boundary which is constructed by identifying $M \times M \setminus \varDelta$ with the complement of a small, closed tubular neighborhood of $\varDelta$ in $M \times M$ and taking the boundary to be the boundary of this tubular neighborhood. \end{definition}

Note that, given a Riemannian metric on $M$, the boundary of $S(M)$ is diffeomorphic to the unit tangent bundle $U(M)$.  In fact, for an embedding of $M$ into a Cartan-Hadamard manifold, $\partial S(M)$ is canonically identified with the unit tangent bundle of the induced metric on $M$.  We will explain this after defining the secant mapping in Definition \ref{secant_mapping}.  The secant space $S(M)$ and the product $M \times M$ coincide up to a set of measure $0$ as $2m$-dimensional domains of integration, and we will sometimes write integrals initially defined on $S(M)$ as integrals over $M \times M$, and vice-versa \\ 

An immersion of $M$ into a Cartan-Hadamard manifold $\Hcal$ induces an almost everywhere-defined mapping from the secant space of $M$ to the space of geodesics in $\Hcal$:  

\begin{definition}
\label{secant_mapping}
Let $f:M^{m} \rightarrow \Hcal^{n}$ be an immersion of a closed, oriented manifold $M$ into a Cartan-Hadamard manifold $\Hcal$.  The secant mapping $l:S(M) \rightarrow \Gcal$ sends a pair of points $(x,y) \in M \times M$ with $f(x) \neq f(y)$ to the unique oriented geodesic $l(x,y)$ in $\Hcal$ which passes through $f(x)$ and $f(y)$, oriented from $f(x)$ to $f(y)$. \\

The secant mapping extends naturally to boundary points of $S(M)$:  if $V$ is a neighborhood of $M$ on which $f$ is an embedding, tangent lines to $f(V)$ can be realized as limiting positions to secant lines through $V \times V$.  We therefore define $l$ on $\partial S(M) = U(M)$ to be the mapping which sends unit tangent vectors to $M$ to the geodesic in $\Hcal$ which they determine via $df$. \end{definition}

\begin{remark}
\label{current_remark}
It might be interesting to develop a definition of a secant space and secant mapping for more general famlies of integral currents in Cartan-Hadamard manifolds.  In the situation considered in Question \ref{linking_question}, for an immersed, codimension $2$ submanifold $M$ which does not bound an embedded, spanning hypersurface, one may be able to prove the conjectured identity for $\int_{\Gcal} \lambda^{2}(M,\gamma) dVol_{\Gcal}$ by studying the secant mapping of an integral current $T$ with $\partial T = M$.  This development would be new even in Euclidean space and may give a more general extension of the Banchoff-Pohl inequality to the space of integral currents in $\R^{n}$.  Almgren's results in \cite{Alm1} show that many families of integral currents in $\R^{n}$ satisfy generalizations of the isoperimetric inequality.  It would be interesting to know whether the Banchoff-Pohl inequality also has a generalization to a larger family of integral currents in $\R^{n}$. \end{remark}

One can also define the secant mapping $l: M_{1} \times M_{2} \rightarrow \Gcal$ for any pair of manifolds $M_{1}, M_{2}$ disjointly embedded in $\Hcal$.  We will state Propositions \ref{key_prop} and \ref{key_prop_2} below terms of such a pair $M_{1}, M_{2}$, however in the applications to our main theorems, $M_{1}, M_{2}$ will be neighborhoods of the same closed manifold $M$ immersed in a Cartan-Hadamard space.  Given a Cartan-Hadamard manifold $\Hcal$ and points $x,y \in \Hcal$, we will let $r(x,y)$ be the distance between $x$ and $y$ and $l(x,y)$ the unique geodesic from $x$ to $y$, oriented from $x$ to $y$, as in the definitions above.  We define a linear transformation from $T_{y}\Hcal$ to $T_{x}\Hcal$ in terms of the Jacobi fields along $l(x,y)$: 

\begin{definition}
\label{Jac_trans}
For $\vec{v} \in T_{y}\Hcal$ orthogonal to $l'$, let $\Jcal(\vec{v})$ be the unique vector in $T_{x}\Hcal$ such that the Jacobi field $J(t)$ along $l$ with $J(0) = 0, J'(0) = \Jcal(\vec{v})$ satisfies $J(r(x,y)) = \vec{v}$. \end{definition}

We will often extend the definition of $\Jcal$ to all of $T_{y}\Hcal$ by setting $\Jcal(l') = 0$. \\ 

For $\mathcal{K} \leq 0$, we will write $\sn_{\Kcal}(r)$ for the length of a Jacobi field with initial conditions $J(0) = 0$, $|J'(0)| = 1$ in a model space with constant curvature $\mathcal{K}$.  Thus, if $\mathcal{K} < 0$, then $\sn_{\Kcal}(r) = \scriptstyle \frac{1}{\sqrt{|\Kcal|}} \sinh(\sqrt{|\Kcal|} r)$ as in Theorems \ref{main_thm_1}.B and \ref{main_thm_2}.B, and if  $\mathcal{K} = 0$ then  $\sn_{\Kcal}(r) = r$.  We will write $\cs_{\Kcal}(r)$ for $\sn_{\Kcal}'(r)$ and $\ct_{\Kcal}(r)$ for $\frac{\cs_{\Kcal}(r)}{\sn_{\Kcal}(r)}$.  At many points, however, it will be convenient to distinguish the case $\mathcal{K} = 0$ from the case $\mathcal{K} < 0$.  In these situations, we will write $r$ rather than $\sn_{\Kcal}(r)$ when $\mathcal{K} = 0$.  In the proof of Proposition \ref{Jac_trans_bd} and our main results, we will use several facts from the proof of the Rauch comparison theorem based on the index lemma, which is presented in \cite[Chapter 10]{doC1}.  However, our convention for the curvature tensor is the opposite of that in \cite{doC1}:  we will write $R(X,Y)Z$ for $\nabla_{X}\nabla_{Y}Z - \nabla_{Y}\nabla_{X}Z - \nabla_{[X,Y]}Z$ and $R(X,Y,Z,W)$ for $\langle R(X,Y)Z,W \rangle$.  

\begin{prop}
\label{Jac_trans_bd}
Let $x,y$ be points in a Cartan-Hadamard manifold $\Hcal^{n}$ as above, and suppose the sectional curvature of $\Hcal$ is bounded above by $\mathcal{K} \leq 0$. \\ 

Then $\Jcal$ is an orientation-preserving linear isomorphism from $l^{\perp} \subseteq T_{y}\Hcal$ to $l^{\perp} \subseteq T_{x}\Hcal$ and $\det(\Jcal) \leq \frac{1}{\sn^{n-1}_{\Kcal}(r)}$.  More generally, for any $m$-dimensional subspace $V$ of $T_{y}\Hcal$ orthogonal to $l'$, $\Jcal$ is an orientation-preserving linear isomorphism from $V$ to its image, and $\det(\Jcal|_{V}) \leq \frac{1}{\sn^{m}_{\Kcal}(r)}$.  The equality $\det(\Jcal|_{V}) = \frac{1}{\sn^{m}_{\Kcal}(r)}$ holds if and only if the following conditions are satisfied: 
\medskip
\begin{flushleft}
There is an $m$-dimensional space of Jacobi fields $J$ along $l(x,y)$ with $J(0) = 0, J(r) \in V$, all of which realize equality in the Rauch comparison theorem with curvature bounded above by $\mathcal{K}$.  For Jacobi fields $J_1, J_2$ belonging to this subspace, we have:  
\end{flushleft}
\begin{equation}
\displaystyle J_{1}'(0), J_{2}'(0) \ \text{are orthogonal in} \ T_{x}\Hcal \iff \langle J_{1}(t), J_{2}(t) \rangle \equiv 0 \ \text{along} \ l(x,y). \medskip  
\end{equation}

Moreover, if equality holds for $\Jcal: V \rightarrow \Jcal(V)$ then, letting $\widetilde{\Jcal}$ be the mapping from $l^{\perp} \subseteq T_{x}\Hcal$ to $l^{\perp} \subseteq T_{y}\Hcal$ in Definition \ref{Jac_trans}, $\widetilde{\Jcal}(\Jcal(V)) = V$, equality holds for $\widetilde{\Jcal}:\Jcal(V) \rightarrow V$ and $\widetilde{\Jcal} \circ \Jcal = \frac{1}{\sn_{\Kcal}^{2}(r)} Id_{V}$.  In addition, if $m \leq (n-2)$, there is an $(n-m-1)$-dimensional family of Jacobi fields $J$ along $l(x,y)$ with $J(0) = 0 \in T_{x}\Hcal, J'(0) \in \Jcal(V)^{\perp}$ and $J(r) \in V^{\perp}$. \end{prop}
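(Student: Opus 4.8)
The plan is to recast $\Jcal$ in terms of the normal Jacobi operator along the geodesic $l = l(x,y)$, parametrized by arclength with $l(0) = x$, $l(r) = y$, and then to run the index-lemma form of the Rauch comparison on the restriction of this operator to $V$. First I would introduce the operator $A(t): l'(0)^{\perp} \rightarrow l'(t)^{\perp}$ which sends $J'(0)$ to $J(t)$ for normal Jacobi fields $J$ with $J(0) = 0$, identifying fibres by parallel transport. Since $\Hcal$ is Cartan-Hadamard it has no conjugate points, so $A(t)$ is a linear isomorphism for every $t > 0$, and by Definition \ref{Jac_trans} one has $\Jcal = A(r)^{-1}$ on $l^{\perp} \subseteq T_{y}\Hcal$. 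Because $A(t) = tId + O(t^{2})$ near $0$, $\det A(t)$ is positive for small $t$ and never vanishes, so $\det A(r) > 0$; hence $\Jcal$ and each restriction $\Jcal|_{V}$ are orientation-preserving isomorphisms onto their images.

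For the determinant bound I would use the shape operator $U(t) = A'(t)A(t)^{-1}$ of the geodesic spheres about $x$. The Wronskian identity $\langle J', K \rangle = \langle J, K' \rangle$ for normal Jacobi fields vanishing at $0$ shows $U(t)$ is symmetric, and the index-lemma argument underlying the Rauch theorem (as in \cite[Chapter 10]{doC1}), together with the curvature bound $\mathcal{K}$, gives $U(t) \geq \ct_{\Kcal}(t)\,Id$ as symmetric operators on $l'^{\perp}$. Given a basis $v_{1},\dots,v_{m}$ of $V$, let $J_{i}$ be the normal Jacobi field with $J_{i}(0) = 0$, $J_{i}(r) = v_{i}$, let $W(t) = \SPAN(J_{1}(t),\dots,J_{m}(t))$, and set $g(t) = \mathrm{vol}(J_{1}(t),\dots,J_{m}(t))$. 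Since $J_{i}' = U J_{i}$, differentiating the Gram determinant shows that the logarithmic derivative $g'(t)/g(t)$ equals the trace of the compression of $U(t)$ to $W(t)$, which is $\geq m\,\ct_{\Kcal}(t)$ because $U(t) \geq \ct_{\Kcal}(t)\,Id$. Thus $g(t)/\sn_{\Kcal}^{m}(t)$ is nondecreasing; comparing its value $\mathrm{vol}(v_{1},\dots,v_{m})/\sn_{\Kcal}^{m}(r)$ at $t = r$ with its limit $\mathrm{vol}(\Jcal(v_{1}),\dots,\Jcal(v_{m}))$ as $t \to 0^{+}$ yields $\det(\Jcal|_{V}) \leq \sn_{\Kcal}^{-m}(r)$. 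The full-space bound is the case $m = n-1$.

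Next I would treat equality. Equality at $t = r$ forces $g'/g = m\,\ct_{\Kcal}$ on all of $(0,r]$ (the integrand is continuous and nonnegative), hence the compression of $U(t)$ to $W(t)$ equals $\ct_{\Kcal}(t)\,Id$; since $U(t) \geq \ct_{\Kcal}(t)\,Id$ this means $W(t)$ lies in the $\ct_{\Kcal}(t)$-eigenspace of $U(t)$ and each $J_{i}$ attains equality in Rauch, $|J_{i}(t)| = |J_{i}'(0)|\sn_{\Kcal}(t)$. Tracing the equality case of the index-lemma estimate shows each such field must have the form $J(t) = c\,\sn_{\Kcal}(t)P(t)$ for a parallel field $P$ along $l$, which forces the sectional curvature of $\SPAN(P,l')$ to equal $\mathcal{K}$; this is precisely the assertion that there is an $m$-dimensional space of Jacobi fields with $J(0)=0$, $J(r)\in V$ realizing equality in the Rauch comparison. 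For two such fields $J_{1} = c_{1}\sn_{\Kcal}P_{1}$, $J_{2} = c_{2}\sn_{\Kcal}P_{2}$, the $P_{i}$ are parallel, so $\langle J_{1}(t),J_{2}(t)\rangle = c_{1}c_{2}\sn_{\Kcal}^{2}(t)\langle P_{1}(0),P_{2}(0)\rangle$ while $J_{i}'(0) = c_{i}P_{i}(0)$; the claimed equivalence between orthogonality of $J_{1}'(0),J_{2}'(0)$ and $\langle J_{1}(t),J_{2}(t)\rangle \equiv 0$ follows at once.

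In the equality case the remaining assertions become explicit computations with the model fields. Taking $P_{i}$ parallel with $P_{i}(r) = v_{i}$, so $J_{i}(t) = \frac{\sn_{\Kcal}(t)}{\sn_{\Kcal}(r)}P_{i}(t)$ and $\Jcal(v_{i}) = \frac{1}{\sn_{\Kcal}(r)}P_{i}(0)$, running the same construction along $l$ traversed from $y$ to $x$ shows $\widetilde{\Jcal}\big(\tfrac{1}{\sn_{\Kcal}(r)}P_{i}(0)\big) = \frac{1}{\sn_{\Kcal}^{2}(r)}P_{i}(r) = \frac{1}{\sn_{\Kcal}^{2}(r)}v_{i}$, giving $\widetilde{\Jcal}(\Jcal(V)) = V$, $\widetilde{\Jcal}\circ\Jcal = \frac{1}{\sn_{\Kcal}^{2}(r)}Id_{V}$, and, by the resulting determinant identity, equality for $\widetilde{\Jcal}:\Jcal(V)\rightarrow V$ as well. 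Finally, when $m \leq n-2$, for any $J$ with $J(0) = 0$ and $J'(0)$ in the $(n-m-1)$-dimensional space $\Jcal(V)^{\perp}\cap l'^{\perp}$, the function $\phi_{i}(t) = \langle J(t),P_{i}(t)\rangle$ satisfies the constant-curvature scalar Jacobi equation $\phi_{i}'' + \mathcal{K}\phi_{i} = 0$ with $\phi_{i}(0) = \phi_{i}'(0) = 0$ (using $R(P_{i},l')l' = \mathcal{K}P_{i}$ and symmetry of the curvature operator), hence $\phi_{i}\equiv 0$ and $J(r)\in V^{\perp}$, producing the asserted family. The main obstacle is the content underlying the second paragraph: establishing the operator inequality $U(t) \geq \ct_{\Kcal}(t)\,Id$ together with its sharp rigidity, i.e. that equality at $t=r$ propagates back to the pointwise eigenstructure of $U$ on all of $(0,r]$ and forces the model fields $\sn_{\Kcal}(t)P(t)$; Steps $3$--$4$ depend entirely on isolating this equality condition correctly.
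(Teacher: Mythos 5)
Your argument is correct, and it reaches every assertion of the proposition, but it travels by a genuinely different route from the paper. The paper gets the inequality $\det(\Jcal|_{V}) \leq \sn_{\Kcal}^{-m}(r)$ essentially for free from the Rauch comparison (every singular value of the map $J'(0) \mapsto J(r)$ is at least $\sn_{\Kcal}(r)$, so the same holds on any subspace), and then carries out the entire equality analysis with the index form: it diagonalizes $I_{t_{0}}$ on the space $\mathscr{I}_{t_{0}}$ of Jacobi fields vanishing at $0$ relative to the inner product $\langle\langle J_{a}, J_{b}\rangle\rangle = \langle J_{a}'(0), J_{b}'(0)\rangle$, identifies the equality fields as eigenvectors for the smallest eigenvalue $\sn_{\Kcal}(t_{0})\cs_{\Kcal}(t_{0})$, and reads off both the pointwise orthogonality relation $\langle J_{i}'(t_{0}), J_{j}(t_{0})\rangle = \delta_{ij}\sn_{\Kcal}(t_{0})\cs_{\Kcal}(t_{0})/\sn_{\Kcal}^{2}(r)$ and, by completing to a full eigenbasis, the final $(n-m-1)$-dimensional family with $J(r) \in V^{\perp}$ at no extra cost. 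You instead run everything through the Riccati operator $U = A'A^{-1}$ and the Gram determinant: monotonicity of $g/\sn_{\Kcal}^{m}$ yields the subspace determinant bound in a more self-contained way than the paper's one-line appeal to Rauch, and your rigidity step is clean --- equality of the endpoint values forces the compression of $U(t)$ to $W(t)$ to equal $\ct_{\Kcal}(t)\,Id$ for all $t$, and since $U - \ct_{\Kcal}Id$ is positive semidefinite this places $W(t)$ inside its kernel, so no separate ``propagation'' lemma is needed despite your closing worry. The only place your route costs something is the last claim, where you must do the scalar computation $\phi_{i}'' + \Kcal\phi_{i} = 0$ with vanishing initial data to see that the complementary family lands in $V^{\perp}$; the paper's eigenbasis gives this directly. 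Both arguments are sound, and yours has the minor advantage of making the $m$-dimensional determinant inequality explicit rather than implicit.
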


\begin{proof} The characterization of $\Jcal$ as an orientation-preserving linear isomorphism and the inequality $\det(\Jcal) \leq \frac{1}{\sn^{m}_{\Kcal}(r)}$ follow immediately from the basic properties of Jacobi fields and the Rauch comparison theorem. \\  

Suppose $\det(\Jcal|_{V}) = \frac{1}{\sn^{m}_{\Kcal}(r)}$. \\ 

Then for any $\vec{v} \in V$, letting $J_{\vec{v}}$ be the Jacobi field along $l(x,y)$ with $J_{\vec{v}}(0) = 0$ and $J_{\vec{v}}(r) = \vec{v}$, $J_{\vec{v}}$ must realize equality in the Rauch comparison theorem with curvature bounded above by $\mathcal{K}$.  Moreover, for $\vec{v},\vec{w} \in V$, $\Jcal(\vec{v}), \Jcal(\vec{w})$ are orthogonal in $\Jcal(V)$ precisely if $\vec{v}, \vec{w}$ are orthogonal in $V$.  Letting $e_{1}, e_{2}, \dots, e_{m}$ be an orthonormal basis for $V$, $\Jcal(e_{1}), \Jcal(e_{2}), \dots, \Jcal(e_{m})$ is therefore an orthogonal basis for $\Jcal(V)$, with $||\Jcal(e_{i})|| = \frac{1}{\sn_{\Kcal}(r)}$ for $i = 1, 2, \dots, m$. \\  

Let $J_{1}, J_{2}, \dots, J_{m}$ be the Jacobi fields along $l(x,y)$ with $J_{i}(0) = 0, J_{i}(r) = e_{i}$, in other words, with $\Jcal(e_{i}) = J_{i}'(0)$.  For $t_{0} \in (0,r]$, let $\mathscr{I}_{t_{0}}$ be the vector space of all Jacobi fields along $l|_{[0,t_{0}]}$ with $J(0) = 0, \langle J(t), l'(t) \rangle \equiv 0$ for $t \in (0,t_{0}]$, and let $I_{t_{0}}$ be the index form of $l$, defined as follows for any piecewise-smooth vector fields $V,W$ along $l$:

\begin{align}
\displaystyle  I_{t_{0}}(V,W) = \int\limits_{0}^{t_{0}} \langle V',W' \rangle - R(V,l',l',W) dt. \bigskip  
\end{align}

Then $I_{t_{0}}$ is a positive-definite, symmetric bilinear form on $\mathscr{I}_{t_0}$, and for $J_{a}, J_{b} \in \mathscr{I}_{t_{0}}$, $I_{t_{0}}(J_{a},J_{b}) = \langle J_{a}(t_{0}), J_{b}'(t_{0}) \rangle$.  Defining a positive-definite inner product $\langle \langle \ \cdot \ , \cdot \ \rangle \rangle$ on $\mathscr{I}_{t_{0}}$ by setting $\langle \langle J_{a}, J_{b} \rangle \rangle = \langle J_{a}'(0), J_{b}'(0) \rangle$, we can diagonalize $I_{t_{0}}$ relative to $\langle \langle \ \cdot \ , \cdot \ \rangle \rangle$.  It follows from the proof of the Rauch comparison theorem via the index lemma that $J_{1}|_{[0,t_0]}, J_{2}|_{[0,t_0]}, \dots, J_{m}|_{[0,t_0]}$ are eigenvectors of $I_{t_0}$, associated to its smallest non-zero eigenvalue $\sn_{\Kcal}(t_0)\cs_{\Kcal}(t_0)$.  We can therefore complete the set of Jacobi fields $J_{1}, J_{2}, \dots, J_{m}$ to a basis $J_{1}, J_{2}, \dots, J_{n-1}$ of $\mathscr{I}_{t_0}$ composed of eigenvectors for $I_{t_{0}}$, which is orthogonal relative to the inner product $\langle \langle \ \cdot \ , \cdot \ \rangle \rangle$ above.  For $i \in \lbrace 1, 2, \dots, m \rbrace$, $j \in \lbrace 1, 2, \dots, n-1 \rbrace$, we therefore have:  

\begin{equation}
\label{jac_trans_bd_pf_eqn_1}
\displaystyle \langle J_{i}'(t_{0}), J_{j}(t_0) \rangle = I_{t_0}(J_{i}, J_{j}) = \delta_{i,j}\frac{\sn_{\Kcal}(t_0)\cs_{\Kcal}(t_0)}{\sn_{\Kcal}(r)^{2}}. \bigskip 
\end{equation}

For $J_{a}, J_{b} \in \mathscr{I}_{t_0}$, $[\frac{d}{dt}]\langle J_{a}(t), J_{b}(t) \rangle|_{t_{0}} = \langle J_{a}'(t_{0}), J_{b}(t_{0}) \rangle + \langle J_{a}(t_{0}), J_{b}'(t_{0}) \rangle = 2I_{t_0}(J_{a},J_{b})$.  Therefore, (\ref{jac_trans_bd_pf_eqn_1}) implies that $J_{1}, J_{2}, \dots, J_{m}$ are pointwise orthogonal along $l(x,y)$, and that for $t_{0} \in (0,r_{0}]$,    

\begin{equation}
\label{jac_trans_bd_pf_eqn_2}
\displaystyle J_{i}'(t_0) = \ct_{\Kcal}(t_{0}) J_{i}(t_0). \bigskip
\end{equation}

This gives a space of Jacobi fields with the properties described in the equality case.  It is straightforward to check the converse, that this implies $\det(\Jcal|_{V}) = \frac{1}{\sn^{m}_{\Kcal}(r)}$. \\  %This implies the properties of the equality case described above.  It is straightforward to check the converse, that these imply $\det(\Jcal) = \frac{1}{\sn^{m}_{\Kcal}(r)}$. \\  

If equality holds for $\Jcal: V \rightarrow \Jcal(V)$, let $\vec{v} \in T_{y}\Hcal$ be orthogonal to $l'$ and let $J(t)$ be the Jacobi field along $l(x,y)$ with $J(0) = 0 \in T_{x}\Hcal$, $J(r) = \vec{v}$, that is, such that $\Jcal(\vec{v}) = J'(0)$.  $J$ realizes equality in the Rauch comparison theorem, so $|J(t)| = \frac{\sn_{\Kcal}(t)}{\sn_{\Kcal}(r)}|\vec{v}|$ for $t \in [0,r]$, and $J'(t) = \ct_{\Kcal}(t) J(t)$ by (\ref{jac_trans_bd_pf_eqn_2}).  It follows by a direct calculation that $J'(t)$ is also a Jacobi field along $l(x,y)$.  Letting $\widetilde{J}(t) = (\ct_{\Kcal}(t) - \ct_{\Kcal}(r))J(t)$, $\widetilde{J}(r) = 0$ and $\widetilde{J}'(r) = \frac{-1}{\sn_{\Kcal}^{2}(r)} \vec{v}$.  Parametrizing $\widetilde{J}$ along $l$ with the opposite orientation shows that $\widetilde{\Jcal}(\Jcal(\vec{v})) = \frac{1}{\sn_{\Kcal}^{2}(r)}\vec{v}$. \\ 

If $m \leq n-2$, let $J_{1}, J_{2}, \dots, J_{n-1}$ be an orthogonal basis of eigenvectors which diagonalizes the index form $I_{r}$ with $J_{1}(r), J_{2}(r), \dots J_{m}(r)$ an orthonormal basis for $V$ as above.  It follows from (\ref{jac_trans_bd_pf_eqn_1}) and (\ref{jac_trans_bd_pf_eqn_2}) that for $i \in \lbrace 1, 2, \dots, m \rbrace$, $j \in \lbrace 1, 2, \dots, n-1 \rbrace$, we have $\langle J_{i}(r), J_{j}(r) \rangle = 0$. \end{proof}

It will be helpful to note that Proposition \ref{Jac_trans_bd} has a straightforward extension to the case in which we take $\Jcal$ to be defined on all of $T_{y}\Hcal$ with $\Jcal(l') = 0$ as above.  In this form, it provides the basis for Propositions \ref{key_prop} and \ref{key_prop_2}, which we will use in the proof of Theorems \ref{main_thm_1} and \ref{generalization}.  Propositions \ref{key_prop}, \ref{key_prop_2} and their applications to the proofs of Theorems \ref{main_thm_1} and \ref{main_thm_2} are slightly different for curves, i.e. $\dim(M) = 1$, than for higher-dimensional submanifolds.  For the case in which $\dim(M_{1}) = \dim(M_{2})=1$ in Propositions \ref{key_prop} and \ref{key_prop_2}, and in which $\dim(M) = 1$ in the proofs of Theorems \ref{main_thm_1} and \ref{main_thm_2}, we will write $dM$ instead of $dVol_{M_{1}}$ and $\dot{M}$ for the oriented unit tangent vector to $M$.  

\begin{prop}
\label{key_prop}
Let $\Hcal$ be a Cartan-Hadamard manifold with sectional curvature bounded above by $\mathcal{K} \leq 0$.  Let $M_{1}, M_{2}$ be oriented submanifolds of $\Hcal$, of dimension $m$ and disjoint from one another, and $l: M_{1} \times M_{2} \rightarrow \Gcal$ the secant mapping.  For $(x_{0},y_{0}) \in M_{1} \times M_{2}$, let $J^{\star}: T_{y_{0}}M_{2} \rightarrow T_{x_{0}}M_{1}$ be the linear transformation given by $\Jcal$ from Definition \ref{Jac_trans}, with $\Jcal(l') = 0$, followed by orthogonal projection onto $T_{x_{0}}M_{1}$.  Let $dI$ be the symplectic form on $\Gcal$. \\  

Then if $m \geq 2$, 

\begin{equation}
\label{key_prop_eqn_1}
\displaystyle l^{*}(dI)^{m} = m! (-1)^{\binom{m+1}{2}} \ \det (J^{\star}) \ dVol_{M_{1}} \wedge dVol_{M_{2}}. \medskip 
\end{equation}

If $m=1$, 

\begin{equation}
\label{key_prop_eqn_1_dim_1}
\displaystyle l^*(dI) = \pm \det (J^{\star}) dM_{1} \wedge dM_{2}, \medskip 
\end{equation}
where the sign in (\ref{key_prop_eqn_1_dim_1}) depends on the angles that the oriented tangents to $M_{1}$ and $M_{2}$ make with the oriented secant line, in a manner which will be described in the proof, cf. also Proposition \ref{key_prop_2}. \\  

In particular, letting $\sigma_{1} \in [0,\frac{\pi}{2}]$ be the angle between $l$ and $T_{x_{0}}M_{1}$ and $\sigma_{2} \in [0,\frac{\pi}{2}]$ the corresponding angle between $l$ and $T_{y_{0}}M_{2}$, for all $m$,  

\begin{equation}
\label{key_prop_eqn_2}
\displaystyle | \langle l^{*}(dI)^{m}, \ dVol_{M_{1}} \wedge dVol_{M_{2}} \rangle | \leq m! \ \sin(\sigma_{1}) \sin(\sigma_{2}) \sn^{-m}_{\Kcal}(r). \medskip  
\end{equation}

Equality holds in (\ref{key_prop_eqn_2}) if and only if the following conditions are satisfied:  letting $T_{x_{0}}M_{1}^{\star}, T_{y_{0}}M_{2}^{\star}$ be the orthogonal projections of $T_{x_{0}}M_{1}, T_{y_{0}}M_{2}$ onto $l^{\perp}$, $\Jcal(T_{y_{0}}M_{2}^{\star}) = T_{x_{0}}M_{1}^{\star}$, and equality holds for $\Jcal:T_{y_{0}}M_{2}^{\star} \rightarrow T_{x_{0}}M_{1}^{\star}$ in Proposition \ref{Jac_trans_bd}. \end{prop}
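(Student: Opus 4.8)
The plan is to realize $T_\gamma\Gcal$, at a geodesic $\gamma$, as the space of Jacobi fields along $\gamma$ orthogonal to $\gamma'$, under which $dI$ becomes the Wronskian form $dI(J,K)=\langle J',K\rangle-\langle J,K'\rangle$: this is the content of $\Gamma^{*}(dI)=d\alpha$ together with (\ref{cartan_eqn}), reading $\omega_j$ as $\langle J,e_j\rangle$ and $\omega_1^{j}$ as $\langle J',e_j\rangle$ in a parallel frame $e_1=\gamma',e_2,\dots,e_n$; one checks $\tfrac{d}{dt}\left(\langle J',K\rangle-\langle J,K'\rangle\right)=0$ from the symmetries of $R$, so the pairing may be evaluated at any point of $\gamma$. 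First I would identify the derivative of the secant mapping. Writing $\gamma=l(x_0,y_0)$, parametrized by arclength from $f(x_0)$ to $f(y_0)$ with $r=r(x_0,y_0)$, a variation of $x_0$ in a direction $\xi\in T_{x_0}M_1$ with $y_0$ fixed moves one endpoint and fixes the other; since $\Gcal$ records only the unparametrized oriented geodesic, $dl(\xi,0)$ is the unique normal Jacobi field $A_\xi$ with $A_\xi(r)=0$ and $A_\xi(0)$ equal to the component of $\xi$ orthogonal to $\gamma'$, and symmetrically $dl(0,\eta)$ is the normal Jacobi field $B_\eta$ with $B_\eta(0)=0$ and $B_\eta(r)$ the orthogonal component of $\eta$. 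That these boundary value problems are uniquely solvable is exactly the absence of conjugate points in $\Hcal$.

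Then the computation of $l^{*}(dI)$ follows from bilinearity and constancy of the Wronskian: evaluating $dI(A_{\xi_1},A_{\xi_2})$ at $t=r$ and $dI(B_{\eta_1},B_{\eta_2})$ at $t=0$ gives $0$, while evaluating the cross terms at $t=0$ and recognizing $B_\eta'(0)=\Jcal(\eta)$ from Definition \ref{Jac_trans} yields $l^{*}(dI)((\xi_1,\eta_1),(\xi_2,\eta_2))=\langle\xi_2,J^{\star}\eta_1\rangle-\langle\xi_1,J^{\star}\eta_2\rangle$, where the passage from $\Jcal$ to $J^{\star}$ uses that $\Jcal(\eta)\perp\gamma'$, so only the component of $\xi$ in $T_{x_0}M_1$ survives the pairing. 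In oriented orthonormal bases $a_i$ of $T_{x_0}M_1$, $b_j$ of $T_{y_0}M_2$ this reads $l^{*}(dI)=-\sum_{i,j}G_{ij}\,dVol_{M_1}^{\,i}\wedge dVol_{M_2}^{\,j}$ with $G_{ij}=\langle a_i,J^{\star}b_j\rangle$ the matrix of $J^{\star}$. I would then invoke the standard identity that the $m$-th exterior power of $\sum_{i,j}G_{ij}\,dVol_{M_1}^{\,i}\wedge dVol_{M_2}^{\,j}$ equals $m!\,(-1)^{\binom m2}\det(G)\,dVol_{M_1}\wedge dVol_{M_2}$, and combine the extra $(-1)^m$ from the sign with $(-1)^{\binom m2}$ to produce $(-1)^{\binom{m+1}{2}}$, which is (\ref{key_prop_eqn_1}). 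For $m=1$ the same computation gives the single coefficient $\det(J^{\star})$; but since $\sigma_1,\sigma_2\in[0,\tfrac{\pi}{2}]$ discard the signs of $\langle\dot M_1,\gamma'\rangle$ and $\langle\dot M_2,\gamma'\rangle$, the sign of this coefficient relative to $\sin(\sigma_1)\sin(\sigma_2)$ depends on the oriented configuration, which accounts for the $\pm$ in (\ref{key_prop_eqn_1_dim_1}).

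For (\ref{key_prop_eqn_2}), note $|\langle l^{*}(dI)^{m},dVol_{M_1}\wedge dVol_{M_2}\rangle|=m!\,|\det(J^{\star})|$, so it suffices to bound $|\det(J^{\star})|$. I would factor $J^{\star}=\iota_1^{*}\circ\Jcal\circ\iota_2$, where $\iota_2:T_{y_0}M_2\to l^{\perp}$ and $\iota_1:T_{x_0}M_1\to l^{\perp}$ are the orthogonal projections, with images $T_{y_0}M_2^{\star}$ and $T_{x_0}M_1^{\star}$, and $\iota_1^{*}$ is orthogonal projection of $l^{\perp}$ onto $T_{x_0}M_1$. An elementary computation gives $\sqrt{\det(\iota_i^{*}\iota_i)}=\sin(\sigma_i)$, since the Gram matrix of the projected orthonormal basis is $I-cc^{T}$ with $|c|=\cos(\sigma_i)$, of determinant $\sin^{2}(\sigma_i)$. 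Writing polar decompositions $\iota_i=Q_iR_i$ with $Q_i$ isometric onto the image and $|\det R_i|=\sin(\sigma_i)$, one obtains $|\det(J^{\star})|=\sin(\sigma_1)\sin(\sigma_2)\,|\det(Q_1^{*}\Jcal Q_2)|$, and since $Q_1^{*}$ is an orthogonal projection it cannot increase $m$-volume, so $|\det(Q_1^{*}\Jcal Q_2)|\le\det(\Jcal|_{T_{y_0}M_2^{\star}})\le\sn_{\Kcal}^{-m}(r)$ by Proposition \ref{Jac_trans_bd}. Equality forces both of the last inequalities to be equalities: projection onto $T_{x_0}M_1^{\star}$ preserves the $m$-volume of the $m$-plane $\Jcal(T_{y_0}M_2^{\star})$ precisely when $\Jcal(T_{y_0}M_2^{\star})=T_{x_0}M_1^{\star}$, and then $\det(\Jcal|_{T_{y_0}M_2^{\star}})=\sn_{\Kcal}^{-m}(r)$ is exactly the equality case of Proposition \ref{Jac_trans_bd}, which is the stated characterization.

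The main obstacle I anticipate is not any single estimate but getting the identification of $dl$ and the sign of the Wronskian form exactly right: the whole formula (\ref{key_prop_eqn_1}), including the delicate constant $(-1)^{\binom{m+1}{2}}$ and the $m=1$ sign, hinges on reading (\ref{cartan_eqn}) correctly as the Wronskian with the orientation conventions of Definition \ref{G_form}, and on correctly assigning which endpoint value vanishes for $A_\xi$ and $B_\eta$. Once those are pinned down, the wedge-power identity and the polar-decomposition bound are routine.
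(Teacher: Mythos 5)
Your proposal is correct, and it reaches the paper's formulas by a genuinely different route. The paper works entirely with adapted orthonormal frames: it builds $\sin(\sigma_{1})$, $\sin(\sigma_{2})$ into the choice of frames $E_{i}$, $B_{j}$, pulls back the canonical and connection forms, and reads off $l^{*}(dI)^{m}$ from the structure equation (\ref{cartan_eqn}), with the Jacobi operator $\Jcal$ entering through the identity $\widetilde{\omega}_{1}^{j}(B_{k})=\langle E_{j},\Jcal(B_{k})\rangle$. You instead identify $T_{\gamma}\Gcal$ invariantly with the normal Jacobi fields along $\gamma$ and $dI$ with the (constant) Wronskian pairing, compute $dl$ as the boundary-value Jacobi fields $A_{\xi}$, $B_{\eta}$, and evaluate the Wronskian at the two endpoints to get the bilinear form $\langle\xi_{2},J^{\star}\eta_{1}\rangle-\langle\xi_{1},J^{\star}\eta_{2}\rangle$ directly; the wedge-power identity and the factorization $J^{\star}=\iota_{1}^{*}\circ\Jcal\circ\iota_{2}$ with $|\det R_{i}|=\sin(\sigma_{i})$ then do the rest, and both routes bottom out in Proposition \ref{Jac_trans_bd} for the bound and its equality case. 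What your version buys: the sign $(-1)^{\binom{m+1}{2}}$ falls out of a single frame-free computation, and (as your argument in fact shows) the $m=1$ sign is determined, not ambiguous, once $\det(J^{\star})$ is read as the determinant of $J^{\star}:T_{y_{0}}M_{2}\to T_{x_{0}}M_{1}$ with respect to the orientations -- the paper's $\pm$ is an artifact of referring $\det$ to the frame vectors $E_{2},B_{2}$ rather than to $\dot M_{1},\dot M_{2}$, and you correctly locate that as the source of the ambiguity. What the paper's version buys: the explicit frame expressions ($\widetilde{\omega}_{2}\wedge\cdots\wedge\widetilde{\omega}_{m+1}=\sin(\sigma_{1})\,dVol_{M_{1}}$, the block form of $\widetilde{\omega}_{1}^{3}\wedge\cdots\wedge\widetilde{\omega}_{1}^{m+1}$) are reused verbatim in Proposition \ref{key_prop_2} and Corollary \ref{sign_cor}, so the bookkeeping is amortized over the later results. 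The one step you should spell out if you write this up is the identification $\Gamma^{*}(dI)=d\alpha$ with the Wronskian: you need that the lift to $U(\Hcal)$ of a variation through geodesics has $\omega_{j}$-component $\langle J(t),E_{j}\rangle$ and $\omega_{1}^{j}$-component $\langle J'(t),E_{j}\rangle$, which is exactly reading (\ref{cartan_eqn}) on such lifts; with the paper's sign convention $d\omega_{1}=\sum_{j}\omega_{1}^{j}\wedge\omega_{j}$ this gives $dI(J,K)=\langle J',K\rangle-\langle J,K'\rangle$ as you assert, and then all signs check out.
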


\begin{proof} Given $(x_{0},y_{0}) \in M_{1} \times M_{2}$ as above, let $E_{1}, E_{2}, \dots, E_{n}$ be an orthonormal frame field for $\Hcal$, defined in a neighborhood of $(x_{0},y_{0})$ in $M_{1} \times M_{2}$, with the following properties: \\

\begin{enumerate}[label=(\Alph*)]
\item 
\label{properties_i}
$E_{1}, E_{2}, \dots, E_{n}$ are based at $x$, in a neighborhood of $x_{0}$ in $M_{1}$.  
\bigskip 

\item 
\label{properties_ii}
$E_{1}$ points along $l(x,y)$. 
\bigskip 

%\item 
%\label{properties_iii}
%$E_{1}, E_{2}$ span a subspace containing the projection of $E_{1}$ onto $T_{x}\Sigma_{1}$.  If $E_{1}$ is normal to $\Sigma_{1}$, then we take $E_{2}$ to be tangent to $\Sigma_{1}$, and if $E_{1}$ is tangent to $\Sigma_{1}$, we take $E_{1}$ to normal to $\Sigma_{1}$.  
%\bigskip 

\item 
\label{properties_iv}
$E_{1}, E_{2}, \dots, E_{m+1}$ span a subspace of $T_{x}\Hcal$ containing $T_{x}M_{1}$.  
\bigskip 

\item 
\label{properties_v}
If $m \geq 2$, then $\cos(\sigma_{1}) E_{1} + \sin(\sigma_{1})E_{2}, E_{3}, \dots, E_{m+1}$ is an oriented orthonormal basis for $T_{x}M_{1}$.  
\smallskip 
\begin{flushleft}
Note that when $m=1$, $\cos(\sigma_{1}) E_{1} + \sin(\sigma_{1})E_{2} = \dot{M}_{1}$ if $\langle \dot{M}_{1}, E_{1} \rangle > 0$, and $\cos(\sigma_{1}) E_{1} + \sin(\sigma_{1})E_{2} = -\dot{M}_{1}$ if $\langle \dot{M}_{1}, E_{1} \rangle < 0$.  If $\langle \dot{M}_{1}, E_{1} \rangle = 0$, we set $E_{2} = \dot{M}_{1}$.  
\end{flushleft}
\end{enumerate}
\medskip 

We let $\widetilde{E}$ be the associated map to the frame bundle $F(\Hcal)$ of $\Hcal$ from the neighborhood of $M_{1} \times M_{2}$ on which $E_{i}$ is defined, and we let $\widetilde{\omega}_{i}, \widetilde{\omega}_{i}^{j}$ be the pullback via $\widetilde{E}$ of the canonical 1-forms $\omega_{i}, \omega_{i}^{j}$ on $F(\Hcal)$ as in (\ref{G_measure_equation}) and (\ref{cartan_eqn}).  By (\ref{cartan_eqn}), we then have: \\   
\begin{equation}
\label{form_exp}
\displaystyle l^{*}(dI) = \widetilde{\omega}_{1}^{2} \wedge \widetilde{\omega}_{2} + \widetilde{\omega}_{1}^{3} \wedge \widetilde{\omega}_{3} + \cdots + \widetilde{\omega}_{1}^{n} \wedge \widetilde{\omega}_{n}.  \bigskip
\end{equation}

By \ref{properties_iv} above, $\widetilde{\omega}_{m+2}, \cdots, \widetilde{\omega}_{n}$ vanish on $T_{x}M_{1}$, and by \ref{properties_i}, $\widetilde{\omega}_{2}, \cdots, \widetilde{\omega}_{n}$ all vanish on $T_{y}M_{2}$, so: 
 
\begin{equation*}
\displaystyle l^{*}(dI)^{m} = m! \ \widetilde{\omega}_{1}^{2} \wedge \widetilde{\omega}_{2} \wedge \widetilde{\omega}_{1}^{3} \wedge \widetilde{\omega}_{3} \wedge \cdots \wedge \widetilde{\omega}_{1}^{m+1} \wedge \widetilde{\omega}_{m+1} \bigskip
\end{equation*}

\begin{equation}
\displaystyle = m! (-1)^{\binom{m+1}{2}} \ \widetilde{\omega}_{2} \wedge \cdots \wedge \widetilde{\omega}_{m+1} \wedge \widetilde{\omega}_{1}^{2} \wedge \cdots \wedge  \widetilde{\omega}_{1}^{m+1}. \bigskip
\end{equation}

If $m \geq 2$, then by \ref{properties_v}, \\
\begin{equation}
\displaystyle \widetilde{\omega}_{2} \wedge \cdots \wedge \widetilde{\omega}_{m+1}  |\scriptstyle_{T_{x}M_{1}} \displaystyle = \sin(\sigma_{1}) dVol_{M_{1}}. \bigskip
\end{equation}

When $m=1$, $\widetilde{\omega}_{2} = \pm \sin(\sigma_{1}) dM_{1}$, with the sign determined by whether the oriented tangent to $M_{1}$ at $x$ makes an acute or obtuse angle with $E_{1}$:  if $\langle \dot{M}_{1}, E_{1} \rangle \geq 0$, then $\widetilde{\omega}_{2} = \sin(\sigma_{1}) dM_{1}$ and if $\langle \dot{M}_{1}, E_{1} \rangle < 0$, then $\widetilde{\omega}_{2} = -\sin(\sigma_{1}) dM_{1}$. \\ 

Since $\widetilde{\omega}_{2}, \widetilde{\omega}_{3} \dots, \widetilde{\omega}_{m+1}$ vanish on $T_{y}M_{2}$, in order to determine $l^{*}(dI)^{m}$ on $T_{x}M_{1} \times T_{y}M_{2}$, it is enough to determine $\widetilde{\omega}_{1}^{2} \wedge \cdots \wedge  \widetilde{\omega}_{1}^{m+1}$ on $T_{y}M_{2}$.  To do so, we choose another orthonormal frame field $B_{1}, B_{2}, \dots, B_{n}$, this time based near $y_{0}$, with properties like those of $E_{i}$ above, that is: \\ 

\begin{enumerate}[label=(\alph*)]
\item 
$B_{1}, B_{2}, \dots, B_{n}$ are based at $y$, in a neighborhood of $y_{0}$ in $M_{2}$.  
\bigskip 
	
\item 
$B_{1}$ points along $l(x,y)$. 
\bigskip 
	
%\item 
%$B_{1}, B_{2}$ span a subspace containing the projection of $B_{1}$ onto $T_{x}\Sigma_{1}$, with the same stipulations as \ref{properties_iii} if $B_{1}$ is normal or tangent to $\Sigma_{2}$.  
%\bigskip 
	
\item 
$B_{1}, B_{2}, \dots, B_{m+1}$ span a subspace of $T_{y}\Hcal$ containing $T_{y}M_{2}$.  
\bigskip 

\item 
If $m \geq 2$, then $\cos(\sigma_{2}) B_{1} + \sin(\sigma_{2})B_{2}, B_{3}, \dots, B_{m+1}$ is an oriented orthonormal basis for $T_{y}M_{2}$.  
\smallskip 
\begin{flushleft}
As in the definition of $E_{i}$ above, when $m=1$, $\cos(\sigma_{1}) B_{1} + \sin(\sigma_{1})B_{2} = \dot{M}_{2}$ if $\langle \dot{M}_{2}, B_{1} \rangle > 0$, and $\cos(\sigma_{1}) B_{1} + \sin(\sigma_{1})B_{2} = -\dot{M}_{2}$ if $\langle \dot{M}_{2}, B_{1} \rangle < 0$.  If $\langle \dot{M}_{2}, B_{1} \rangle = 0$, we again set $B_{2} = \dot{M}_{2}$.  
\end{flushleft}
\end{enumerate}
\medskip 

With $B_{1}, B_{2}, \dots, B_{n}$ defined as above, we then have: 

\begin{equation*}
\displaystyle \widetilde{\omega}_{1}^{2}\left(\cos(\sigma_{2})B_{1} + \sin(\sigma_{2})B_{2}\right) = \sin(\sigma_{2}) \langle E_{2}, \Jcal(B_{2}) \rangle. \bigskip 
\end{equation*}

When $m \geq 2$, then for $j,k \in \lbrace 3, \dots, m+1 \rbrace$, we have $\widetilde{\omega}_{1}^{j}(B_{k}) = \langle E_{j}, \Jcal(B_{k}) \rangle$.  Similarly, $\widetilde{\omega}_{1}^{2}(B_{k}) = \langle E_{2}, \Jcal(B_{k}) \rangle$ and $\widetilde{\omega}_{1}^{j}(\cos(\sigma_{2})B_{1} + \sin(\sigma_{2})B_{2}) = \langle E_{j}, \Jcal(\cos(\sigma_{2})B_{1} + \sin(\sigma_{2})B_{2}) \rangle = \sin(\sigma_{2}) \langle E_{j}, \Jcal(B_{2}) \rangle$.  Therefore, when $m \geq 2$,  

\begin{equation*}
%\label{key_prop_pf_eqn_2}  	
\displaystyle \widetilde{\omega}_{1}^{2} \wedge \cdots \wedge \widetilde{\omega}_{1}^{m+1} (\cos(\sigma_{2})B_{1} + \sin(\sigma_{2})B_{2}, \dots, B_{m+1}) = \sin(\sigma_{2}) \det(\langle E_{j}, \Jcal(B_{k}) \rangle )_{j,k=2,\dots, m+1}. \bigskip
\end{equation*}

this implies: 

\begin{equation*}
\displaystyle l^{*}(dI)^{m} = m! (-1)^{\binom{m+1}{2}} \sin(\sigma_{1}) \sin(\sigma_{2}) \det(\langle E_{j}, \Jcal(B_{k}) \rangle ) dVol_{M_{1}} \wedge dVol_{M_{2}} \end{equation*}

\begin{equation}
\label{key_prop_pf_eqn_3} 
\displaystyle  = m! (-1)^{\binom{m+1}{2}} \det(J^{\star}) dVol_{M_{1}} \wedge dVol_{M_{2}}. \bigskip
\end{equation}

This is (\ref{key_prop_eqn_1}). \\ 

When $m=1$, we have: 

\begin{equation}
\label{key_prop_pf_eqn_1}  
\displaystyle l^*(dI) = \pm \sin(\sigma_1) \sin(\sigma_2) \langle E_{2},\Jcal (B_{2}) \rangle dM_{1} \wedge dM_{2}, \bigskip
\end{equation}
where the sign in (\ref{key_prop_pf_eqn_1}) is $+$ if $\langle \dot{M}_{1}, E_{1} \rangle \langle \dot{M}_{2}, B_{1} \rangle > 0$ and $-$ if $\langle \dot{M}_{1}, E_{1} \rangle \langle \dot{M}_{2}, B_{1} \rangle < 0$.  If $\langle \dot{M}_{1}, E_{1} \rangle \langle \dot{M}_{2}, B_{1} \rangle = 0$, the sign may be positive or negative, but because this only occurs along a subset of $M_{1} \times M_{2}$ of measure zero, we do not need to determine a convention for this case.  This is (\ref{key_prop_eqn_1_dim_1}). \\ 

By Proposition \ref{Jac_trans_bd}, (\ref{key_prop_pf_eqn_1}) and (\ref{key_prop_pf_eqn_3}), $|\det(J^{\star})| \leq \sin(\sigma_{1}) \sin(\sigma_{2}) \sn^{-m}_{\Kcal}(r)$, which establishes (\ref{key_prop_eqn_2}).  Equality in (\ref{key_prop_eqn_2}) requires that $|\det(J^{\star})| = \sin(\sigma_{1}) \sin(\sigma_{2}) \sn^{-m}_{\Kcal}(r)$, which is equivalent to having $\Jcal(T_{y_{0}}M_{2}^{\star}) = T_{x_{0}}M_{1}^{\star}$ with equality in Proposition \ref{Jac_trans_bd}. \end{proof} 

The following consequence of the proof of Proposition \ref{key_prop} will be used in the proof of Theorems \ref{main_thm_1} and \ref{generalization}:  

\begin{corollary}
\label{sign_cor}
Let $M_1, M_2$ and $\Hcal$ be as in Proposition \ref{key_prop}, with $\dim(\Hcal) = n \geq 3$ and $\dim(M_1) = \dim(M_2) = n-1$, and let $(x,y) \in M_1 \times M_2$ such that $l(x,y)$ meets $M_{1}$ and $M_{2}$ transversely at $x$ and $y$.  Then the sign of the Jacobian of $l: (M_1 \times M_2, dVol_{M_1} \wedge dVol_{M_2}) \rightarrow (\Gcal, dVol_{\Gcal})$ at $(x,y)$ is $i_{x}(l,M_1) i_{y}(l,M_2)$. \end{corollary}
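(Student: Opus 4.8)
The plan is to reduce the statement to a computation of the sign of $\det(J^{\star})$, the linear map introduced in Proposition \ref{key_prop}, and then to identify that sign with the product of intersection numbers by tracking the orientation conventions on $l^{\perp}$. Since $n \geq 3$ we have $m := \dim M_{1} = \dim M_{2} = n-1 \geq 2$, so (\ref{key_prop_eqn_1}) applies and gives
\begin{equation*}
l^{*}(dI)^{n-1} = (n-1)!\,(-1)^{\binom{n}{2}}\,\det(J^{\star})\; dVol_{M_{1}} \wedge dVol_{M_{2}}.
\end{equation*}
Combining this with Definition \ref{G_measure}, namely $dVol_{\Gcal} = \frac{(-1)^{\binom{n}{2}}}{(n-1)!}\,dI^{(n-1)}$, the two factors $(-1)^{\binom{n}{2}}$ and the two factorials cancel, so that
\begin{equation*}
l^{*}(dVol_{\Gcal}) = \det(J^{\star})\; dVol_{M_{1}} \wedge dVol_{M_{2}}.
\end{equation*}
Hence the Jacobian of $l$ with respect to $dVol_{M_{1}} \wedge dVol_{M_{2}}$ and $dVol_{\Gcal}$ is exactly $\det(J^{\star})$, and it remains to show $\operatorname{sign}\det(J^{\star}) = i_{x}(l,M_{1})\,i_{y}(l,M_{2})$.

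For this I would return to the orthonormal frames $E_{1},\dots,E_{n}$ and $B_{1},\dots,B_{n}$ based near $x$ and $y$ respectively, as constructed in the proof of Proposition \ref{key_prop}, where $E_{1}$ and $B_{1}$ are the forward unit tangent $l'$ to the oriented geodesic $l(x,y)$ at $x$ and at $y$. By (\ref{key_prop_pf_eqn_3}),
\begin{equation*}
\det(J^{\star}) = \sin(\sigma_{1})\sin(\sigma_{2})\,\det\big(\langle E_{j},\Jcal(B_{k})\rangle\big)_{j,k=2,\dots,n}.
\end{equation*}
Transversality of $l$ with $M_{1}$ and $M_{2}$ forces $\sigma_{1},\sigma_{2} \in (0,\tfrac{\pi}{2}]$, so $\sin(\sigma_{1})\sin(\sigma_{2}) > 0$ and the sign of $\det(J^{\star})$ equals the sign of $\det(\langle E_{j},\Jcal(B_{k})\rangle)_{j,k=2,\dots,n}$. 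This last determinant represents $\Jcal \colon l^{\perp} \subseteq T_{y}\Hcal \to l^{\perp} \subseteq T_{x}\Hcal$ in the orthonormal bases $\{B_{2},\dots,B_{n}\}$ and $\{E_{2},\dots,E_{n}\}$.

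The key point is then that $\Jcal$ is orientation-preserving (Proposition \ref{Jac_trans_bd}) for the orientations on $l^{\perp} \subseteq T_{y}\Hcal$ and $l^{\perp} \subseteq T_{x}\Hcal$ determined by requiring $l' \wedge (\cdot) = dVol_{\Hcal}$. Because all bases in sight are orthonormal, the sign of $\det(\langle E_{j},\Jcal(B_{k})\rangle)$ factors as $\epsilon_{x}\epsilon_{y}$, where $\epsilon_{x} = \operatorname{sign}\langle dVol_{\Hcal}, E_{1}\wedge\cdots\wedge E_{n}\rangle$ and $\epsilon_{y} = \operatorname{sign}\langle dVol_{\Hcal}, B_{1}\wedge\cdots\wedge B_{n}\rangle$ record whether each frame agrees with the ambient orientation. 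Finally I would match these signs with the intersection numbers: since at $x$ the form $dVol_{M_{1}}$ is represented by $(\cos(\sigma_{1})E_{1} + \sin(\sigma_{1})E_{2}, E_{3},\dots,E_{n})$ while $l$ is oriented by $E_{1}$, one has
\begin{equation*}
dVol_{l} \wedge dVol_{M_{1}} = \sin(\sigma_{1})\, E_{1}\wedge E_{2}\wedge\cdots\wedge E_{n},
\end{equation*}
so by the defining formula for the oriented intersection number $i_{x}(l,M_{1}) = \operatorname{sign}\langle dVol_{\Hcal}, E_{1}\wedge\cdots\wedge E_{n}\rangle = \epsilon_{x}$, and the identical computation with the $B$-frame gives $i_{y}(l,M_{2}) = \epsilon_{y}$. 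Multiplying yields the claim.

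The step I expect to be most delicate is the orientation bookkeeping in the third paragraph: one must verify that the orientations of $l^{\perp}$ at the two endpoints for which Proposition \ref{Jac_trans_bd} asserts $\Jcal$ to be orientation-preserving are precisely those induced by the \emph{forward} tangent $l'$ (so that $B_{1}$ is the continuation of the geodesic past $y$, not the tangent pointing back toward $x$), and that with this convention the factorization $\operatorname{sign}\det(\langle E_{j},\Jcal(B_{k})\rangle) = \epsilon_{x}\epsilon_{y}$ carries the correct signs. This is most easily checked against the flat model, where $\Jcal$ is a positive multiple of the inverse parallel transport along $l$ and is thus manifestly orientation-preserving for these orientations. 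Once the conventions are fixed consistently, the remaining identifications are the routine wedge computations indicated above.
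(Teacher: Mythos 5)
Your proof is correct and follows essentially the same route the paper intends: the paper states this corollary without a separate proof, as an immediate consequence of the computation $l^{*}(dVol_{\Gcal}) = \det(J^{\star})\, dVol_{M_{1}} \wedge dVol_{M_{2}}$ from the proof of Proposition \ref{key_prop}, and your argument simply makes explicit the remaining orientation bookkeeping (that $\Jcal$ is orientation-preserving by Proposition \ref{Jac_trans_bd}, and that $\epsilon_{x} = i_{x}(l,M_{1})$, $\epsilon_{y} = i_{y}(l,M_{2})$ since $\sin\sigma_{1},\sin\sigma_{2}>0$ by transversality). The delicate point you flag is resolved exactly as you suspect: property (b) in the proof of Proposition \ref{key_prop} takes $B_{1}$ to be the forward tangent of the oriented geodesic at $y$, so the two copies of $l^{\perp}$ carry the orientations induced by $l'$ and the ambient orientation, consistently with Proposition \ref{Jac_trans_bd}.
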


The following result complements Proposition \ref{key_prop} and will also be used in the proof of Theorems \ref{main_thm_1} and \ref{main_thm_2}:  

\begin{prop}
\label{key_prop_2}
Let $\Hcal^{n}$, $M_{1}$, $M_{2}$ and $l: M_{1} \times M_{2} \rightarrow \Gcal$ and the associated vector fields and functions $E_{i}, B_{j}, \sigma_{1}, \sigma_{2}$ be as in Proposition \ref{key_prop}.  For $(x_{0},y_{0})$ in $M_{1} \times M_{2}$ such that $l(x_{0},y_{0})$ does not meet $T_{x_{0}}M_{1}$ or $T_{y_{0}}M_{2}$ orthogonally, define $\overline{J}^{\star}: T_{y_{0}}M_{2} \rightarrow T_{x_{0}}M_{1}$ as follows: 
\medskip
\begin{flushleft}
Let $\overline{T_{x_{0}}M_{1}}^{\star}, \overline{T_{y_{0}}M_{2}}^{\star}$ be the intersections of $T_{x_{0}}M_{1}, T_{y_{0}}M_{2}$ with the orthogonal complement to $l'$ in $T_{x_{0}}\Hcal, T_{y_{0}}\Hcal$. 
\end{flushleft}
\medskip
\begin{itemize}
\item On $\overline{T_{y_{0}}M_{2}}^{\star}$, $\overline{J}^{\star}$ is $\Jcal$, followed by orthogonal projection to $\overline{T_{x_{0}}M_{1}}^{\star}$.  
\medskip 
	
\item On the $1$-dimensional subspace complementary to $\overline{T_{y_{0}}M_{2}}^{\star}$ in $T_{y_{0}}M_{2}$, $\overline{J}^{\star}$ is orthogonal projection onto $span(l')$, followed by parallel translation along $l$ to $x_{0}$, followed by orthogonal projection onto $T_{x_{0}}M_{1}$.  
\begin{flushleft}
Note that for $m=1$, $\overline{T_{x_{0}}M_{1}}^{\star}, \overline{T_{y_{0}}M_{2}}^{\star}$ consist only of the origin and this case describes the entirety of $\overline{J}^{\star}$.  
\end{flushleft}
\end{itemize}
\medskip 

Let $\widetilde{\omega}_{1}$ be the pullback to $M_1 \times M_2$ via $E_{1}$ of the canonical $1$-form on $U(\Hcal)$, as in Proposition \ref{key_prop}. \\ 

Then for $m \geq 2$, 
 
\begin{equation}
\label{key_prop_2_formula}
\displaystyle dr \wedge \widetilde{\omega}_{1} \wedge l^{*}(dI)^{m-1} = (m-1)!(-1)^{\binom{m+1}{2}} \det(\overline{J}^{\star}) \ dVol_{M_{1}} \wedge dVol_{M_{2}}. \medskip 
\end{equation}

In particular, 

\begin{equation}
\label{key_prop_2_formula_2}
\displaystyle |\langle  dr \wedge \widetilde{\omega}_{1} \wedge l^{*}(dI)^{m-1}, \ dVol_{M_{1}} \wedge dVol_{M_{2}} \rangle| \leq (m-1)! \cos(\sigma_{1}) \cos(\sigma_{1}) \medskip  \sn^{-(m-1)}_{\Kcal}(r). 
\end{equation}

Equality holds in (\ref{key_prop_2_formula_2}) if and only if $\Jcal(\overline{T_{y_{0}}M_{2}}^{\star}) = \overline{T_{x_{0}}M_{1}}^{\star}$ and equality holds in Proposition \ref{Jac_trans_bd}. \\

For $m=1$, 

\begin{equation}
\label{key_prop_2_formula_dim_1}
\displaystyle dr \wedge \widetilde{\omega}_{1}  = \pm \det(\overline{J}^{\star}) dM_{1} \wedge dM_{2} = \pm \cos(\sigma_{1})\cos(\sigma_{2}) dM_{1} \wedge dM_{2}, \medskip 
\end{equation}
where the sign in (\ref{key_prop_2_formula_dim_1}) is $+$ if $\langle \dot{M}_{1}, E_{1} \rangle \langle \dot{M}_{2}, B_{1} \rangle > 0$ and $-$ if $\langle \dot{M}_{1}, E_{1} \rangle \langle \dot{M}_{2}, B_{1} \rangle < 0$, as in Proposition \ref{key_prop}. \end{prop}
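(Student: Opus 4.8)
The plan is to run the computation in close parallel with the proof of Proposition \ref{key_prop}, reusing the adapted orthonormal frames $E_{1}, \dots, E_{n}$ based at $x$ and $B_{1}, \dots, B_{n}$ based at $y$, together with the pulled-back forms $\widetilde{\omega}_{i}, \widetilde{\omega}_{i}^{j}$. The first step is to pin down $dr$ and $\widetilde{\omega}_{1}$ in these frames. Since $\widetilde{\omega}_{1}(V) = \langle df(\mathrm{proj}_{M_{1}}V), E_{1} \rangle$, the form $\widetilde{\omega}_{1}$ is supported on the $M_{1}$-factor, and by the first variation of arc length $\nabla_{x} r = -E_{1}$ and $\nabla_{y} r = B_{1}$. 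Writing $\widetilde{\beta}_{1}$ for the $1$-form $V \mapsto \langle df(\mathrm{proj}_{M_{2}}V), B_{1}\rangle$ supported on the $M_{2}$-factor, I get $dr = -\widetilde{\omega}_{1} + \widetilde{\beta}_{1}$ and hence $dr \wedge \widetilde{\omega}_{1} = \widetilde{\beta}_{1} \wedge \widetilde{\omega}_{1}$. This removes the distance function and replaces it by a clean wedge of two soldering-type forms, one on each factor.

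Next I would expand $\widetilde{\beta}_{1} \wedge \widetilde{\omega}_{1} \wedge l^{*}(dI)^{m-1}$ using $l^{*}(dI) = \sum_{j=2}^{n} \widetilde{\omega}_{1}^{j} \wedge \widetilde{\omega}_{j}$ as in (\ref{form_exp}) and evaluate on $T_{x}M_{1} \oplus T_{y}M_{2}$. On $T_{x}M_{1}$, in the dual basis to the oriented basis $\cos(\sigma_{1})E_{1} + \sin(\sigma_{1})E_{2}, E_{3}, \dots, E_{m+1}$, one has $\widetilde{\omega}_{1} = \cos(\sigma_{1})\,\xi_{1}^{*}$, $\widetilde{\omega}_{2} = \sin(\sigma_{1})\,\xi_{1}^{*}$ and $\widetilde{\omega}_{i+1} = \xi_{i}^{*}$ for $i \geq 2$. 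The key combinatorial observation is that after wedging with $\widetilde{\omega}_{1}$ the only surviving terms of the multinomial expansion of $l^{*}(dI)^{m-1}$ are those whose soldering factors are exactly $\widetilde{\omega}_{3}, \dots, \widetilde{\omega}_{m+1}$: the forms $\widetilde{\omega}_{1}$ and $\widetilde{\omega}_{2}$ are both proportional to $\xi_{1}^{*}$ on $T_{x}M_{1}$ so $\widetilde{\omega}_{2}$ cannot occur, and the $\widetilde{\omega}_{j}$ with $j \geq m+2$ vanish on $T_{x}M_{1}$. Thus $\widetilde{\omega}_{1} \wedge l^{*}(dI)^{m-1}$ reduces, up to a sign from reordering soldering against connection forms, to $(m-1)! \,\widetilde{\omega}_{1} \wedge \widetilde{\omega}_{3} \wedge \cdots \wedge \widetilde{\omega}_{m+1} \wedge \widetilde{\omega}_{1}^{3} \wedge \cdots \wedge \widetilde{\omega}_{1}^{m+1}$, with the connection forms acting on the $M_{2}$-factor.

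The third step is to read off the two factors. On $T_{x}M_{1}$ the soldering block gives $\widetilde{\omega}_{1} \wedge \widetilde{\omega}_{3} \wedge \cdots \wedge \widetilde{\omega}_{m+1} = \cos(\sigma_{1})\, dVol_{M_{1}}$, the source of the $\cos(\sigma_{1})$ replacing the $\sin(\sigma_{1})$ of Proposition \ref{key_prop}. On $T_{y}M_{2}$, using $\widetilde{\beta}_{1} = \cos(\sigma_{2})\,\eta_{1}^{*}$ together with $\widetilde{\omega}_{1}^{j}(B_{k}) = \langle E_{j}, \Jcal(B_{k})\rangle$ and the convention $\Jcal(l') = 0$, the form $\widetilde{\beta}_{1}$ selects $\eta_{1}$ and the remaining connection forms produce $\det(\langle E_{j}, \Jcal(B_{k})\rangle)_{j,k=3}^{m+1}$ on $\overline{T_{y_{0}}M_{2}}^{\star} = \mathrm{span}(B_{3}, \dots, B_{m+1})$. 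I would then check that this is exactly $\det(\overline{J}^{\star})$: in the bases $\{\eta_{1}, B_{3}, \dots, B_{m+1}\}$ and $\{\xi_{1}, E_{3}, \dots, E_{m+1}\}$ the matrix of $\overline{J}^{\star}$ is block diagonal, the $1 \times 1$ block being $\cos(\sigma_{1})\cos(\sigma_{2})$ — this is precisely the ``project onto $\mathrm{span}(l')$, parallel translate, project onto $T_{x_{0}}M_{1}$'' prescription applied to $\eta_{1}$, since $\mathrm{proj}_{T_{x_{0}}M_{1}}(E_{1}) = \cos(\sigma_{1})\xi_{1}$ — and the $(m-1) \times (m-1)$ block being $(\langle E_{j}, \Jcal(B_{k})\rangle)_{j,k=3}^{m+1}$. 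This yields (\ref{key_prop_2_formula}). For the inequality (\ref{key_prop_2_formula_2}) I would factor $|\det(\overline{J}^{\star})| = \cos(\sigma_{1})\cos(\sigma_{2})\,|\det(P \circ \Jcal|_{\overline{T_{y_{0}}M_{2}}^{\star}})|$, observe that the orthogonal projection $P$ onto $\overline{T_{x_{0}}M_{1}}^{\star}$ cannot increase volume, and apply Proposition \ref{Jac_trans_bd} to the $(m-1)$-dimensional subspace $\overline{T_{y_{0}}M_{2}}^{\star} \subseteq l'^{\perp}$ to obtain the factor $\sn_{\Kcal}^{-(m-1)}(r)$; equality forces $\Jcal(\overline{T_{y_{0}}M_{2}}^{\star}) = \overline{T_{x_{0}}M_{1}}^{\star}$ together with the equality case of Proposition \ref{Jac_trans_bd}, as stated. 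The case $m=1$ is immediate: then $\overline{T_{y_{0}}M_{2}}^{\star}$ and $\overline{T_{x_{0}}M_{1}}^{\star}$ are trivial, $l^{*}(dI)^{m-1} = 1$, and $dr \wedge \widetilde{\omega}_{1} = \widetilde{\beta}_{1} \wedge \widetilde{\omega}_{1} = \pm\cos(\sigma_{1})\cos(\sigma_{2})\, dM_{1} \wedge dM_{2}$, with the sign governed by $\langle \dot{M}_{1}, E_{1}\rangle \langle \dot{M}_{2}, B_{1}\rangle$ exactly as in Proposition \ref{key_prop}.

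I expect the main obstacle to be the sign and combinatorial bookkeeping: verifying that reordering the $m$ soldering forms past the $m-1$ connection forms, together with the placement of $\widetilde{\beta}_{1}$, produces exactly the $(-1)^{\binom{m+1}{2}}$ of the statement by the same interleaving count as in the proof of Proposition \ref{key_prop}, and confirming that the $\Jcal(l') = 0$ convention together with the asymmetric definition of $\overline{J}^{\star}$ on its $1$-dimensional complementary direction assembles into the block-diagonal matrix above. Everything analytic — the Jacobi-field determinant bound and its equality case — is inherited directly from Proposition \ref{Jac_trans_bd}, so the real content is organizing the frame computation so that the $\cos(\sigma_{1})\cos(\sigma_{2})$ factor and the determinant over the codimension-one subspace $\overline{T_{y_{0}}M_{2}}^{\star}$ emerge cleanly.
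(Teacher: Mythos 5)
Your proposal is correct and takes essentially the same route as the paper's proof: the paper also reduces $dr\wedge\widetilde{\omega}_{1}$ to $\cos(\sigma_{1})\cos(\sigma_{2})$ times the wedge of the unit-tangent-direction coforms (via the rotated frames $\widehat{E}_{1},\widehat{E}_{2}$ and $\widehat{B}_{1},\widehat{B}_{2}$, which are exactly your $\xi_{1},\eta_{1}$ picture), kills the $\widetilde{\omega}_{1}^{2}\wedge\widetilde{\omega}_{2}$ term by the same proportionality observation, and identifies the remaining connection-form block with $\det(\langle E_{j},\Jcal(B_{k})\rangle)_{j,k=3,\dots,m+1}$ before invoking Proposition \ref{Jac_trans_bd}. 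Your derivation of $dr=\widetilde{\beta}_{1}-\widetilde{\omega}_{1}$ from the first variation of arc length is just a slightly more direct phrasing of the paper's identity $dr=\cos(\sigma_{2})\widehat{\beta}_{2}-\cos(\sigma_{1})\widehat{\omega}_{2}$, and the block-diagonal computation of $\det(\overline{J}^{\star})$ matches the paper's equation for $\overline{J}^{\star}(\widehat{B}_{2})$.
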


\begin{proof} We define $\widehat{E}_{1}, \widehat{E}_{2}$ on the same neighborhood in $M_{1} \times M_{2}$ as $E_{1}, E_{2}, \dots, E_{n}$ above, as follows: 
 
\begin{equation}
\displaystyle \widehat{E}_{1} = \sin(\sigma_{1}) E_{1} - \cos(\sigma_{1}) E_{2}, \widehat{E}_{2} = \cos(\sigma_{1}) E_{1} + \sin(\sigma_{1}) E_{2}. \bigskip
\end{equation}

For $m \geq 2$, $\widehat{E}_{2}, E_{3}, \dots, E_{m+1}$ is then an oriented orthonormal basis for $T_{x}M_{1}$.  For $m=1$, $\widehat{E}_{2}$ is $\dot{M}_{1}$ if $\langle \dot{M}_{1}, E_{1} \rangle > 0$ and is $-\dot{M}_{1}$ if $\langle \dot{M}_{1}, E_{1} \rangle < 0$ and, as in the proof of Proposition \ref{key_prop}, if $\langle \dot{M}_{1}, E_{1} \rangle = 0$ then $\widehat{E}_{2} = \dot{M}_{1}$ so that $\widehat{E}_{1}, \widehat{E}_{2}$ coincide with $E_{1}, E_{2}$.  For all $m$, in all cases, $\widehat{E}_{1}$ is normal to $T_{x}M_{1}$ and $\widehat{E}_{1}, \widehat{E}_{2}$ is an orthonormal basis for the span of $E_{1}, E_{2}$, with the same orientation as $E_{1}, E_{2}$. \\ 

We likewise define $\widehat{B}_{1}, \widehat{B}_{2}$ on the same neighborhood as $B_{1}, B_{2}, \dots, B_{n}$, by the equivalent of the definition of $\widehat{E}_{1}, \widehat{E}_{2}$:

\begin{equation}
\displaystyle \widehat{B}_{1} = \sin(\sigma_{2}) B_{1} - \cos(\sigma_{2}) B_{2}, \widehat{B}_{2} = \cos(\sigma_{2}) B_{1} + \sin(\sigma_{2}) B_{2}. \bigskip
\end{equation}

In this notation, for $m \geq 2$, $\overline{T_{x}M_{1}}^{\star} = span(E_{3}, \dots, E_{m+1})$ and $\overline{T_{y}M_{2}}^{\star} = span(B_{3}, \dots, B_{m+1})$, and for all $m$, 

\begin{equation}
\label{key_prop_2_proof_eqn}
\displaystyle \overline{J}^{\star}(\widehat{B}_{2}) = \cos(\sigma_{1}) \cos(\sigma_{2}) \widehat{E}_{2}. \bigskip
\end{equation}

For $m \geq 2$, we then have $\det(\overline{J}^{\star}) = \cos(\sigma_{1}) \cos(\sigma_{2}) \det(\overline{J}^{\star}|_{\overline{T_{y}M_{2}}^{\star}})$. \\ 

We let $\widehat{\omega}_{1}, \widehat{\omega}_{2}$ be the dual $1$-forms to $\widehat{E}_{1}, \widehat{E}_{2}$ and $\widehat{\beta}_{1}, \widehat{\beta}_{2}$ the dual $1$-forms to $\widehat{B}_{1}, \widehat{B}_{2}$ and we introduce the notation $\widetilde{\beta}_{3}, \dots, \widetilde{\beta}_{m+1}$ for the dual $1$-forms to $B_{3}, \dots, B_{m+1}$.  For $m \geq 2$, we then have $dVol_{M_1} = \widehat{\omega}_{2} \wedge \widetilde{\omega}_{3} \wedge \cdots \wedge \widetilde{\omega}_{m+1}$ and $dVol_{M_2} = \widehat{\beta}_{2} \wedge \widetilde{\beta}_{3} \wedge \cdots \wedge \widetilde{\beta}_{m+1}$.  For $m=1$, $\widehat{\omega}_{2} = dM_{1}$ if $\langle \dot{M}_{1}, E_{1} \rangle \geq 0$ and $\widehat{\omega}_{2} = -dM_{1}$ if $\langle \dot{M}_{1}, E_{1} \rangle < 0$.  Likewise, $\widehat{\beta}_{2} = dM_{2}$ if $\langle \dot{M}_{2}, B_{1} \rangle \geq 0$ and $\widehat{\beta}_{2} = -dM_{2}$ if $\langle \dot{M}_{2}, B_{1} \rangle < 0$. \\ 

For $m \geq 2$, we then have $dr = \cos(\sigma_{2}) \widehat{\beta}_{2} - \cos(\sigma_{1}) \widehat{\omega}_{2}$, and therefore: 

\begin{equation}
\displaystyle dr \wedge \widetilde{\omega}_{1} \wedge dI^{m - 1} = \cos(\sigma_{1}) \cos(\sigma_{2}) \ \widehat{\beta}_{2} \wedge \widehat{\omega}_{2} \wedge \left( \widetilde{\omega}_{1}^{2} \wedge \widetilde{\omega}_{2} + \cdots + \widetilde{\omega}_{1}^{m+1} \wedge \widetilde{\omega}_{m+1} \right)^{m-1}. \bigskip
\end{equation}

Because $\widetilde{\omega}_{2} = \cos(\sigma_{1}) \widehat{\omega}_{1} - \sin(\sigma_{1}) \widehat{\omega}_{2}$, and because $\widehat{\omega}_{1}$ is zero on $T_{x}M_{1} \times T_{y}M_{2}$, we then have:  
%this is equal to: 

\begin{equation*}
\displaystyle dr \wedge \widetilde{\omega}_{1} \wedge dI^{m - 1} = \cos(\sigma_{1}) \cos(\sigma_{2}) \ \widehat{\beta}_{2} \wedge \widehat{\omega}_{2} \wedge \left(\widetilde{\omega}_{1}^{3} \wedge \widetilde{\omega}_{3} + \cdots + \widetilde{\omega}_{1}^{m+1} \wedge \widetilde{\omega}_{m+1} \right)^{m-1} \medskip
\end{equation*}

\begin{equation*}
\displaystyle = (m-1)! \cos(\sigma_{1}) \cos(\sigma_{2}) \ \widehat{\beta}_{2} \wedge \widehat{\omega}_{2} \wedge \widetilde{\omega}_{1}^{3} \wedge \widetilde{\omega}_{3} \wedge \cdots \wedge \widetilde{\omega}_{1}^{m+1} \wedge \widetilde{\omega}_{m+1} 
\end{equation*}

\begin{equation}
\label{key_prop_2_eqn}
\displaystyle = (-1)^{\binom{m+1}{2}}(m-1)! \cos(\sigma_{1}) \cos(\sigma_{2}) \ dVol_{M_{1}} \wedge \widehat{\beta}_{2} \wedge \widetilde{\omega}_{1}^{3} \wedge \cdots \wedge \widetilde{\omega}_{1}^{m+1}. \bigskip
\end{equation}

By the properties of $\widetilde{\omega}_{1}^{j}$ established in the proof of Proposition \ref{key_prop}, 
 
\begin{equation*}
\displaystyle \widetilde{\omega}_{1}^{3} \wedge \cdots \wedge \widetilde{\omega}_{1}^{m+1} = \det(\overline{J}^{\star}|_{T_{y}M_{2} \cap (l')^{\perp}}) \widetilde{\beta}_{3} \wedge \cdots \wedge \widetilde{\beta}_{m+1} 
\end{equation*}

\begin{equation}
\label{key_prop_2_proof_eqn_2}
\displaystyle  = \det(\langle E_{j}, \Jcal(B_{k}) \rangle )_{j,k=3,\dots, m+1} \ \widetilde{\beta}_{3} \wedge \cdots \wedge \widetilde{\beta}_{m+1}. \bigskip
\end{equation}

This then implies that $dr \wedge \widetilde{\omega}_{1} \wedge dI^{m - 1} = (-1)^{\binom{m+1}{2}}(m-1)! \det(\overline{J}^{\star}) \ dVol_{M_{1}} \wedge dVol_{M_{2}}$, which is (\ref{key_prop_2_formula}).  The inequality (\ref{key_prop_2_formula_2}) then follows from (\ref{key_prop_2_proof_eqn_2}) and Proposition \ref{Jac_trans_bd}, with equality preciesly if $\Jcal(\overline{T_{y_{0}}M_{2}}^{\star}) = \overline{T_{x_{0}}M_{1}}^{\star}$ with equality in Proposition \ref{Jac_trans_bd}. \\ 

For $m=1$, we also have $\widetilde{\omega}_{1} = \cos(\sigma_{1}) \widehat{\omega}_{2}$, which is equal to $\cos(\sigma_{1}) dM_{1}$ if $\langle \dot{M}_{1}, E_{1} \rangle \geq 0$ and is equal to $-\cos(\sigma_{1}) dM_{1}$ if $\langle \dot{M}_{1}, E_{1} \rangle < 0$.  There are four possible expressions for $dr$ in terms of $\cos(\sigma_{2}) \widehat{\beta}_{2}$ and $\cos(\sigma_{1}) \widehat{\omega}_{2}$, depending on the signs of $\langle \dot{M}_{1}, E_{1} \rangle$ and $\langle \dot{M}_{2}, B_{1} \rangle$, however only the sign of the term involving $\cos(\sigma_{2}) \widehat{\beta}_{2}$ is relevant to $dr \wedge \widetilde{\omega}_{1}$.  Therefore, 
  
\begin{equation}
\label{key_prop_2_eqn_2}
\displaystyle dr \wedge \widetilde{\omega}_{1} = \pm \cos(\sigma_{1}) \cos(\sigma_{2}) dM_{1} \wedge dM_{2}, \bigskip
\end{equation}
where the sign in (\ref{key_prop_2_eqn_2}) is again $+$ if $\langle \dot{M}_{1}, E_{1} \rangle \langle \dot{M}_{2}, B_{1} \rangle > 0$ and $-$ if $\langle \dot{M}_{1}, E_{1} \rangle \langle \dot{M}_{2}, B_{1} \rangle < 0$.  As in Proposition \ref{key_prop}, because $\langle \dot{M}_{1}, E_{1} \rangle \langle \dot{M}_{2}, B_{1} \rangle = 0$ on a subset of $M_{1} \times M_{2}$ of measure zero, we do not need to determine the sign of (\ref{key_prop_2_eqn_2}) in this case.  By (\ref{key_prop_2_proof_eqn}), the result in (\ref{key_prop_2_eqn_2}) is (\ref{key_prop_2_formula_dim_1}). \end{proof}

\begin{remark}
\label{key_prop_2_remark}

The condition in Proposition \ref{key_prop_2}, that $l(x_{0},y_{0})$ does not meet either $T_{x_{0}}M_{1}$ or $T_{y_{0}}M_{2}$ orthogonally, holds almost everywhere on $M_{1} \times M_{2}$.  It is natural to extend the definition of $\det(\overline{J}^{\star}) \ dVol_{M_{1}} \wedge dVol_{M_{2}}$ to be $0$ at points where $l(x_{0},y_{0})$ is orthogonal to either $T_{x_{0}}M_{1}$ or $T_{y_{0}}M_{2}$.  Proposition \ref{key_prop_2} also holds in this case, and we will adopt this definition below. \end{remark}

The following result will allow us to use Propositions \ref{key_prop} and \ref{key_prop_2} in concert in the proof of our main results:  

\begin{prop}
\label{stokes_prop}

Let $M$ be a closed, oriented $m$-dimensional manifold immersed in a Cartan-Hadamard manifold $\Hcal$ and $l:S(M) \rightarrow \Gcal$ its secant mapping.  Let $f:\R_{\geq 0} \rightarrow \R_{\geq 0}$ be a smooth function with $f(r) = o(r^{m+1}), f'(r) = o(r^{m})$ as $r \rightarrow 0$, and let $\widetilde{\omega}_{1}$ be as in Propositions \ref{key_prop} and \ref{key_prop_2}.  Then:  

\begin{equation}
\label{stokes_prop_eqn}
\displaystyle \int\limits_{S(M)} f(r) \ l^{*}(dI)^{m} = -\int\limits_{S(M)} f'(r) \ dr \wedge \widetilde{\omega}_{1} \wedge l^{*}(dI)^{m-1}. \medskip  
\end{equation}
\end{prop}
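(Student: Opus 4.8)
The plan is to prove (\ref{stokes_prop_eqn}) by an integration-by-parts argument based on Stokes' theorem, using the fact that $l^{*}(dI)$ admits $\widetilde{\omega}_{1}$ as a global primitive on $S(M)$. To see this, recall that the secant mapping factors as $l = \Gamma \circ E_{1}$, where $E_{1}: S(M) \rightarrow U(\Hcal)$ assigns to $(x,y)$ the unit tangent vector at $f(x)$ pointing along the oriented secant geodesic $l(x,y)$; this is the first vector of the frames used in Propositions \ref{key_prop} and \ref{key_prop_2}, and it is defined away from the measure-zero set where $f(x) = f(y)$. Since $\omega_{1} = \Ecal^{*}(\alpha)$ depends only on the first vector of a frame, $\widetilde{\omega}_{1} = E_{1}^{*}(\alpha)$ is a well-defined $1$-form on $S(M)$, independent of the remaining frame vectors. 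Because $\Gamma^{*}(dI) = d\alpha$, it follows that $l^{*}(dI) = E_{1}^{*}(d\alpha) = d\widetilde{\omega}_{1}$, so $l^{*}(dI)$ is exact and, in particular, closed.

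I would then introduce the $(2m-1)$-form $\eta = f(r)\,\widetilde{\omega}_{1} \wedge l^{*}(dI)^{m-1}$ and compute its exterior derivative. By the Leibniz rule, together with $d\widetilde{\omega}_{1} = l^{*}(dI)$ and $d\big(l^{*}(dI)^{m-1}\big) = 0$, one finds $d\eta = f'(r)\,dr \wedge \widetilde{\omega}_{1} \wedge l^{*}(dI)^{m-1} + f(r)\,l^{*}(dI)^{m}$. Thus (\ref{stokes_prop_eqn}) is equivalent to the assertion that $\int_{S(M)} d\eta = 0$. To establish this I would apply Stokes' theorem on the region $S_{\epsilon}(M) \subseteq S(M)$ obtained by removing the $\epsilon$-tubular neighborhood of the diagonal $\varDelta$ (and, if $f$ has self-intersections, also an $\epsilon$-tubular neighborhood of the locus $\{f(x) = f(y)\}$, where $\eta$ is undefined), giving $\int_{S_{\epsilon}(M)} d\eta = \int_{\partial S_{\epsilon}(M)} \eta$, and then let $\epsilon \rightarrow 0$.

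The two facts needed to complete the argument, and the main obstacle, are analytic estimates near $\{r = 0\}$. First, $d\eta$ is bounded and hence integrable on the compact manifold $M \times M$: by the bounds (\ref{key_prop_eqn_2}) and (\ref{key_prop_2_formula_2}) and the inequality $\sn_{\Kcal}(r) \geq r$ valid for $\Kcal \leq 0$, the two terms of $d\eta$ have comass $\lesssim f(r)\,r^{-m}$ and $\lesssim f'(r)\,r^{-(m-1)}$ relative to $dVol_{M \times M}$, which are $o(r)$ by the hypotheses $f(r) = o(r^{m+1})$ and $f'(r) = o(r^{m})$; dominated convergence then yields $\int_{S_{\epsilon}(M)} d\eta \rightarrow \int_{S(M)} d\eta$. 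Second, the boundary term vanishes in the limit: on $\partial S_{\epsilon}(M)$ one has $r \sim \epsilon$, so $|f(r)| = o(\epsilon^{m+1})$; the connection forms $\widetilde{\omega}_{1}^{j}$ controlling the rotation of the secant direction are $O(1/r)$, so $l^{*}(dI)^{m-1}$ has comass $O(\epsilon^{-(m-1)})$ there, while $\widetilde{\omega}_{1}$ is bounded and $\partial S_{\epsilon}(M)$, being an $\epsilon$-sphere bundle over $M$, has $(2m-1)$-volume $O(\epsilon^{m-1})$. Hence $\big| \int_{\partial S_{\epsilon}(M)} \eta \big| \lesssim o(\epsilon^{m+1}) \cdot \epsilon^{-(m-1)} \cdot \epsilon^{m-1} = o(\epsilon^{m+1}) \rightarrow 0$, and an analogous estimate shows that the contribution from a tube around $\{f(x) = f(y)\}$ also tends to $0$, since there too $r \sim \epsilon$ while $f(r) = o(\epsilon^{m+1})$. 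Combining these gives $\int_{S(M)} d\eta = \lim_{\epsilon \to 0} \int_{\partial S_{\epsilon}(M)} \eta = 0$, which is (\ref{stokes_prop_eqn}).

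I expect the delicate part to be entirely in these uniform near-diagonal estimates and in matching the orders of vanishing against the decay hypotheses on $f$; by contrast, the identification $l^{*}(dI) = d\widetilde{\omega}_{1}$ and the ensuing Leibniz computation are purely formal. The reduction handling self-intersections of a general immersion proceeds, as in the proof of the Crofton formula (Theorem \ref{Crofton}), by excision with vanishing boundary contributions, so the essential content is the behavior near $\varDelta$.
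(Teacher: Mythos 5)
Your proposal is correct and follows essentially the same route as the paper: both hinge on $\widetilde{\omega}_{1}$ being a primitive for $l^{*}(dI)$ so that Stokes' theorem gives the integration-by-parts identity, with the decay hypotheses on $f$ ensuring the forms are integrable and the contribution from $\lbrace r = 0 \rbrace$ vanishes. The only (cosmetic) difference is that the paper extends $f(r)\,\widetilde{\omega}_{1} \wedge l^{*}(dI)^{m-1}$ as a $C^{1}$ form across the singular locus and to the boundary $\partial S(M) = U(M)$, where the boundary integral equals a constant times $Vol(U(M))\,f(0) = 0$, whereas you excise $\epsilon$-tubes and estimate the boundary terms directly.
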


\begin{proof} If $M$ is embedded, this follows immediately from Stokes' theorem, which gives a more general result for any smooth, real-valued $f(r)$ as follows:  integrating over $\partial S(M) = U(M)$,    

\begin{equation*}
\displaystyle   (-1)^{\binom{m+1}{2}} (m-1)! Vol(U(M)) f(0) = \int\limits_{U(M)} f(r) \ \widetilde{\omega}_{1} \wedge l^*(dI)^{m-1} 
\end{equation*}

\begin{equation}
\label{stokes_prop_pf_eqn}
\displaystyle = \int\limits_{S(M)} f'(r) \ dr \wedge \widetilde{\omega}_{1} \wedge l^*(dI)^{m-1} + f(r) \ l^*(dI)^{m}. \bigskip
\end{equation}

The fact that $r \equiv 0$ on $\partial S(M)$ and $f(0) = 0$ then implies (\ref{stokes_prop_eqn}). \\  

For $M$ immersed but not embedded in $\Hcal$, $l^*(dI)^{m-1}$, $\widetilde{\omega}_{1} \wedge l^*(dI)^{m-1}$ and $l^*(dI)^{m}$ are no longer globally-defined differential forms on $S(M)$ because the mapping $l:S(M) \rightarrow \Gcal$ is not defined on the whole of $S(M)$.  However, the conditions $f(r) = o(r^{m+1})$, $f'(r) = o(r^{m})$ as $r \rightarrow 0$, and the descriptions of $dr$, $\widetilde{\omega}_{1} \wedge l^*(dI)^{m-1}$ and $l^*(dI)^{m}$ in Propositions \ref{key_prop} and \ref{key_prop_2}, ensure that $f(r) \ \widetilde{\omega}_{1} \wedge l^*(dI)^{m-1}$, $f'(r) \ dr \wedge \widetilde{\omega}_{1} \wedge l^*(dI)^{m-1}$ and $f(r) \ l^*(dI)^{m}$ are $C^{1}$ differential forms defined on the entire secant space $S(M)$, so that the result again follows from Stokes' theorem as above. \end{proof}

%%%%%%%%%%%%%%%%%%%%%%%%%%%%%%%%%%%%%%%%%%%%%%%%%%%%%%%%%%%%%%%%%%%%%%%%%%%%%%%%%%%%%%%%%%%%%%%%%%%%%%%%%%%%%%%%%%%%%%%%%

\section{The Banchoff-Pohl Inequality in Cartan-Hadamard Manifolds}
\label{bp_in_ch} 

%%%%%%%%%%%%%%%%%%%%%%%%%%%%%%%%%%%%%%%%%%%%%%%%%%%%%%%%%%%%%%%%%%%%%%%%%%%%%%%%%%%%%%%%%%%%%%%%%%%%%%%%%%%%%%%%%%%%%%%%%

In this section, we will prove Theorems \ref{main_thm_1} and \ref{main_thm_2}.  In fact, we will show that Theorem \ref{main_thm_1} follows from Theorem \ref{main_thm_2} and a formula for the invariant $\int_{\Hcal}w(M,p)^{2} dp$, which we will state and prove in Proposition \ref{hyp_main_thm} and which we will also use in the proof of Theorem \ref{generalization}.  

\begin{proof}[Proof of Theorem \ref{main_thm_2}]
	
We will first show that, in the notation of Section \ref{secants}:  

\begin{equation}
\label{bpch_pf_eqn_1} 
(m+1) \displaystyle \int\limits_{S(M)} r \ l^{*}(dI)^{m} = \int\limits_{S(M)} r \ l^{*}(dI)^{m} - m \ dr \wedge \widetilde{\omega}_{1} \wedge l^{*}(dI)^{m-1}. \bigskip 
\end{equation}	

If $M$ is embedded, this follows immediately from (\ref{stokes_prop_pf_eqn}) in the proof of Proposition \ref{stokes_prop}, with $f(r) = r$.  To allow for the possibility that $M$ is immersed but not embedded, let $\phi_{m}(r) = \frac{r^{m+2}}{1 + r^{m+1}}$.  For each $N > 0$, $( \frac{1}{N} ) \phi_{m}(Nr) = o(r^{m+1})$ and $\phi_{m}'(Nr) = o(r^{m})$ as $r \rightarrow 0$.  By Proposition \ref{stokes_prop}, we therefore have: 

\begin{equation*}
(m+1) \displaystyle \int\limits_{S(M)} \scriptstyle \frac{1}{N} \displaystyle \phi_{m}(Nr) l^*(dI)^{m} =  \int\limits_{S(M)} \scriptstyle \frac{1}{N} \displaystyle \phi_{m}(Nr) l^*(dI)^{m} - m \int\limits_{S(M)} \phi_{m}'(Nr) dr \wedge \widetilde{\omega}_{1} \wedge l^*(dI)^{m-1}. \medskip   
\end{equation*}

Taking the limit as $N \rightarrow \infty$, (\ref{bpch_pf_eqn_1}) then follows from the dominated convergence theorem. \\  

By (\ref{bpch_pf_eqn_1}), we then have:  

\begin{equation*}
(-1)^{(\binom{m+1}{2} + 1)}(m+1) \displaystyle \int\limits_{S(M)} r \ l^{*}(dI)^{m}  
\end{equation*}

\begin{equation*}
= (-1) \displaystyle \int\limits_{S(M)} \left( r \langle l^*(dI)^{m}, dVol_{S(M)} \rangle - m \langle dr \wedge \widetilde{\omega}_{1} \wedge l^*(dI)^{m-1}, dVol_{S(M)} \rangle \right) dVol_{S(M)} 
\end{equation*}

\begin{equation}
\label{bpch_pf_key_prop_eqn}
\displaystyle \leq \int\limits_{S(M)} \left( r | \langle l^*(dI)^{m}, dVol_{S(M)} \rangle | + m | \langle dr \wedge \widetilde{\omega}_{1} \wedge l^*(dI)^{m-1}, dVol_{S(M)}\rangle | \right) dVol_{S(M)}. \bigskip
\end{equation}

By Propositions \ref{key_prop} and \ref{key_prop_2}, this implies:  

\begin{equation*}
\frac{(-1)^{(\binom{m+1}{2} + 1)}(m+1)}{m!} \displaystyle \int\limits_{S(M)} r \ l^{*}(dI)^{m}  
\end{equation*}

\begin{equation}
\label{bpch_pf_eqn_2}	
\displaystyle \leq \int\limits_{S(M)} \frac{1}{\sn_{\Kcal}(r)^{m-1}} \left( \frac{r}{\sn_{\Kcal}(r)} \sin(\sigma_{1}) \sin(\sigma_{2}) + \cos(\sigma_{1}) \cos(\sigma_{2}) \right) dVol_{S(M)}. \bigskip   
\end{equation}

In any Cartan-Hadamard manifold $\Hcal$, we therefore have: 

\begin{equation*}
\frac{(-1)^{(\binom{m+1}{2} + 1)}(m+1)}{m!} \displaystyle \int\limits_{S(M)} r \ l^{*}(dI)^{m} \leq \int\limits_{S(M)} \frac{1}{r^{m-1}} \cos(\sigma_{1} - \sigma_{2}) dVol_{S(M)} 
\end{equation*}

\begin{equation}
\label{bpch_pf_eqn_3}
\displaystyle \leq \int\limits_{S(M)} \frac{1}{r^{m-1}} dVol_{S(M)}, \bigskip 
\end{equation}
which is (\ref{main_thm_2_eqn}).  When the curvature of $\Hcal$ is bounded above by $\mathcal{K} < 0$, (\ref{bpch_pf_eqn_2}) implies that: 

\begin{equation*}
\frac{(-1)^{(\binom{m+1}{2} + 1)}(m+1)}{m!} \displaystyle \int\limits_{S(M)} r \ l^{*}(dI)^{m} + \int\limits_{S(M)} \frac{\sn_{\Kcal}(r) - r}{\sn_{\Kcal}(r)^{m}} \sin(\sigma_{1})\sin(\sigma_{2}) dVol_{S(M)} 
\end{equation*}

\begin{equation}
\label{bpch_pf_eqn_4}
\displaystyle \leq \int\limits_{S(M)} \frac{1}{\sn_{\Kcal}(r)^{m-1}} \cos(\sigma_{2} - \sigma_{1}) dVol_{S(M)} \leq \int\limits_{S(M)} \frac{1}{\sn_{\Kcal}(r)^{m-1}} dVol_{S(M)}. \bigskip 
\end{equation}

By Proposition \ref{key_prop}, 

\begin{equation}
\label{bpch_pf_eqn_5}
 \frac{(-1)^{(\binom{m+1}{2} + 1)}}{m!} \displaystyle \int\limits_{S(M)} \left( \sn_{\Kcal}(r) - r \right) l^{*}(dI)^{m} \leq \int\limits_{S(M)} \frac{\sn_{\Kcal}(r) - r}{\sn_{\Kcal}(r)^{m}} \sin(\sigma_{1})\sin(\sigma_{2}) dVol_{S(M)}. \bigskip 
\end{equation}

By (\ref{bpch_pf_eqn_4}) and (\ref{bpch_pf_eqn_5}), we therefore have

\begin{equation}
\frac{(-1)^{(\binom{m+1}{2} + 1)}}{m!} \displaystyle \int\limits_{S(M)} \left( \sn_{\Kcal}(r) + m r \right) l^{*}(dI)^{m} \leq \int\limits_{S(M)} \frac{1}{\sn_{\Kcal}(r)^{m-1}} dVol_{S(M)}, \bigskip 
\end{equation}
which is (\ref{main_thm_2_B_eqn}). \\ 

Suppose we have equality. \\

We begin by collecting some elementary consequences:  by (\ref{bpch_pf_key_prop_eqn}), $\langle l^*(dI)^{m}, dVol_{S(M)} \rangle \leq 0$ and $\langle dr \wedge \widetilde{\omega}_{1} \wedge l^*(dI)^{m-1}, dVol_{S(M)}\rangle \geq 0$.  By Proposition \ref{key_prop}, this implies that $sgn(det(J^{\star})) = (-1)^{(\binom{m+1}{2} + 1)}$.  By Proposition \ref{key_prop_2}, when $m \geq 2$, this implies that $sgn(det(\overline{J}^{\star})) = (-1)^{\binom{m+1}{2}}$ wherever $\overline{J}^{\star}$ is defined and when $m=1$, this implies that $\langle \dot{M}_{1}, E_{1} \rangle \langle \dot{M}_{2}, B_{1} \rangle \geq 0$.  By (\ref{bpch_pf_eqn_2}), equality also implies that equality holds in Propositions \ref{key_prop} and \ref{key_prop_2} for all $(x,y)$ in $S(M)$ at which the secant mapping is defined.  Therefore, for all such $(x,y)$, 

\begin{equation}
\label{bpch_pf_eq_eqn_1}
\displaystyle det(\Jcal : T_{y}M^{\star} \rightarrow T_{x}M^{\star}) = \frac{(-1)^{(\binom{m+1}{2} + 1)}}{\sn_{\Kcal}(r)^{m}}, 
\end{equation}
\begin{equation}
\label{bpch_pf_eq_eqn_2}
\displaystyle det(\Jcal:\overline{T_{y}M}^{\star} \rightarrow \overline{T_{x}M}^{\star}) = \frac{(-1)^{(\binom{m+1}{2})}}{\sn_{\Kcal}(r)^{m-1}}. \bigskip 
\end{equation}

This then implies that, in the notation of the proof of Proposition \ref{key_prop}, $\Jcal(B_{2}) = \frac{-1}{\sn_{\Kcal}(r)}E_{2}$ wherever $\overline{J}^{\star}$ is defined as in Proposition \ref{key_prop_2}. \\ 

Finally, equality implies that for all $(x,y)$ in $S(M)$ at which the secant mapping is defined, $\sigma_{1} = \sigma_{2}$.  For the remainder of the proof, we will denote common value $\sigma$.  Letting $\overline{d}_{x}$ be the extrinsic distance function from $f(x)$ in $\Hcal$ and $d_{x}$ its restriction to $M$, this implies that all critical points of $d_{x}$ occur where $M$ intersects the image via the exponential map of $\Hcal$ of the normal space to $M$ at $x$, which we denote $Exp_{x}^{\Hcal}(T_{x}M^{\perp})$.  Because $\Jcal(T_{y}M^{\star}) = T_{x}M^{\star}$ (which says that $\Jcal(T_{y}M) = T_{x}M$ when $l(x,y)$ meets $T_{x}M$ and $T_{y}M$ orthogonally), Proposition \ref{Jac_trans_bd} implies that $M$ and $Exp_{x}^{\Hcal}(T_{x}M^{\perp})$ intersect transversely, and thus in a discrete set.  The critical points of $d_{x}$ are therefore a discrete subset of $M$. \\ 

We will prove the characterization of equality in several steps:  first, we will show that for all $x \in M$, any critical point $y_{0}$ of $d_{x}$ with $f(y_0) \neq f(x)$ is a local maximum of $d_x$.  We will use this to show that the image $f(M)$ of $f:M \rightarrow \Hcal$ is a closed, connected submanifold, homeomorphic to $S^{m}$ and canonically oriented by $f$.  We will then show that the image of $f$ is contained in a geodesic sphere in $\Hcal$.  Finally, letting $p_{0}$ be the center of this geodesic sphere, we will show that the geodesic cone $C(p_{0},M)$ with base $f(M)$ and vertex $p_{0}$ is an embedded, totally geodesic submanifold of $\Hcal$ and is isometric to a ball in an $(m+1)$-dimensional space of constant curvature $\mathcal{K}$. \\

To show that for all $x \in M$, any critical point $y_{0}$ of $d_{x}$ with $f(y_0) \neq f(x)$ is a local maximum of $d_x$, we will completely describe $Hess(d_x)$ at such critical points, as well as the second fundamental form of $M$ at $y_0$ in the normal direction given by $l'$.  In doing so, we will assume that $M$ and $f: M \rightarrow \Hcal$ are $C^{2}$, however it will be important to note that to this point, the entire proof, including propositions \ref{Jac_trans_bd}, \ref{key_prop}, \ref{key_prop_2} and \ref{stokes_prop}, is valid for $C^{1}$ submanifolds $M$ and immersions $f:M \rightarrow \Hcal$. \\ 

To establish these facts about critical points $y_{0}$ of $d_{x}$, note that for fixed $x \in M$, the function $\sigma(x,\cdot)$ can be extended to a continuous function $\overline{\sigma}$ on $\Hcal \setminus f(x)$ with values in $[0,\frac{\pi}{2}]$, measuring the angle that the geodesic segment from $f(x)$ to a point $y$ in $\Hcal$ makes with $T_{x}M$.  This function $\overline{\sigma}$ is smooth away from $\overline{\sigma}^{-1}(\frac{\pi}{2})$ and $\overline{\sigma}^{-1}(0)$, and in a neighborhood of any point $y \in \Hcal$ with $\overline{\sigma}(y) \notin \lbrace 0, \frac{\pi}{2} \rbrace$, the vectors $E_{1}, \widehat{E}_{1}, \widehat{E}_{2}$ and $E_{2}$ as in the proofs of Propositions \ref{key_prop} and \ref{key_prop_2} are well-defined in $T_{f(x)}\Hcal$.  Letting $r = \overline{d}_{x}(y)$, we can then define the vector field $B_{2}$ as in Proposition \ref{key_prop} in a neighborhood of $y \in \Hcal$ by setting $\Jcal(B_{2}) = \frac{-1}{\sn_{\Kcal}(r)}E_{2}$.  The vector field $B_{1}$ as in Proposition \ref{key_prop} is also well-defined on $\Hcal \setminus f(x)$ and coincides with $grad(\overline{d}_{x})$.  For $y$ in $M$ with $\sigma(x,y) \neq \frac{\pi}{2}$, these definitions of $\widehat{E}_{1}, \widehat{E}_{2}, E_{2}, B_{1}$ and $B_{2}$ coincide with those in the proofs of Propositions \ref{key_prop} and \ref{key_prop_2}. \\ 

The vector field $\widehat{B}_{2}$ as in the proof of Proposition \ref{key_prop_2} is also well-defined, and $\cos(\sigma) \widehat{B}_{2}$ coincides with $grad(d_{x})$, on $M \setminus \overline{\sigma}^{-1}(\lbrace 0, \frac{\pi}{2} \rbrace)$.  We claim that the gradient of the function $\sigma(x,\cdot)$ defined on $M \setminus f^{-1}(x)$ coincides with $\frac{\sin(\sigma)}{\sn_{\Kcal}(r)}\widehat{B}_{2}$:  because $B_{2}(\overline{\sigma}) = \frac{1}{\sn_{\Kcal}(r)}$ at all $y \in M \setminus f^{-1}(x)$, this is immediate when $m=1$, and when $m \geq 2$ it suffices to show that, for any $B$ in $T_{y}M$ which is orthogonal to $\widehat{B}_{2}$, $B(\sigma) = 0$.  To see this, let $0 < \sigma_{0} = \sigma(x,y) < \frac{\pi}{2}$.  The pre-image of $\sigma_{0}$ in $\Hcal$ via $\overline{\sigma}$ is the image via $Exp^{\Hcal}$ of the cone in $T_{x}\Hcal$ whose link is: 

\begin{equation}
\lbrace \cos(\sigma_0)E + \sin(\sigma_0)\nu \ : \ E \in T_{x}M,\ \nu \in T_{x}M^{\perp},\ |E| = |\nu| = 1 \rbrace. \bigskip  
\end{equation}

Because equality holds in Propositions \ref{key_prop} and \ref{key_prop_2}, $\Jcal(T_{y}M^{\star}) = T_{x}M^{\star}$ and $\Jcal(\overline{T_{y}M}^{\star}) = \overline{T_{x}M}^{\star}$.  In this notation, the geodesic segment from $x$ to $y$ is tangent to $\cos(\sigma_0)\widehat{E}_{2} + \sin(\sigma_0)\widehat{E}_{1}$ in the link of $\overline{\sigma}^{-1}(\sigma_{0})$.  Letting $\widehat{E}_{2}, E_{3}, \dots, E_{m+1}$ be an orthonormal basis for $T_{x}M$ as in Proposition \ref{key_prop_2}, the Jacobi fields determined by $J(0) = 0, J'(0) = E_{3}, \dots, E_{m+1}$ span the orthogonal complement to $\widehat{B}_{2}$ in $T_{y}M$ and are tangent to $\overline{\sigma}^{-1}(\sigma_{0})$.  This implies that $B(\sigma) = 0$ for any $B$ orthogonal to $\widehat{B}_{2}$ in $T_{y}M$, and thus that the gradient of $\sigma(x,\cdot)$ coincides with $\frac{\sin(\sigma)}{\sn_{\Kcal}(r)}\widehat{B}_{2}$ at $y$. \\  

Letting $y_{0}$ be a critical point of $d_{x}$ and $c:[0,\varepsilon) \rightarrow M$ any smooth, unit-speed path in $M$ with $c(0) = y_0$, it follows from the equality in the Rauch comparison theorem that $[\frac{d}{ds}]\sigma(x,c(s))|_{s=0} = \frac{-1}{\sn_{\Kcal}(r)}$, and therefore 

\begin{equation}
\label{path_limiting}
\displaystyle \lim\limits_{s \rightarrow 0 +} \langle c'(s), \widehat{B}_{2} \rangle = -1. \bigskip 
\end{equation}

Calculating $\langle \nabla_{\widehat{B}_{2}}\widehat{B}_{2}, \widehat{B}_{1} \rangle$ for $y$ near $y_0$ and taking the limit as $y \rightarrow y_0$, we therefore have that the second fundamental form of $M$ at $y$ in the normal direction $B_{1} = l' = grad(\overline{d}_{x})$ is diagonal, equal to $-(\ct_{\Kcal}(r) + \frac{1}{\sn_{\Kcal}(r)})Id_{T_{y_{0}}M}$.  It also follows from equality in Proposotion \ref{key_prop} that $Hess(\overline{d}_{x})|_{T_{y_{0}}M}$ is diagonal, equal to $\ct_{\Kcal}(r)Id_{T_{y_{0}}M}$.  Therefore, $Hess(d_x)$ at $y_0$ is diagonal, equal to $\frac{-1}{\sn_{\Kcal}(r)} Id_{T_{y_{0}}M}$, and $y_{0}$ is a strict local maximum of $d_{x}$. \\ 

Next, we will show that the image of $f$ in $\Hcal$ is a closed, connected submanifold of $\Hcal$ which is homeomorphic to $S^{m}$ and oriented via $f$. \\ 

Because all critical points $y_0$ of $d_x$ with $f(y_{0}) \neq f(x)$ are local maxima of $d_x$, and because $d_{x}$ is smooth on $M \setminus f^{-1}(f(x))$, every component $M_0$ of $M$ has a point $x_0$ with $f(x_0) = f(x)$, where $d_{x}|_{M_0}$ attains its minimum.  This then implies that all components of $M$ have the same image in $\Hcal$ via $f$.  This also implies that $M \setminus f^{-1}(f(x))$ retracts onto the local maxima of $d_{x}$.  Because $f$ is an immersion, $f^{-1}(f(x))$ is a discrete subset of $M$.  If $m \geq 2$, the complement of $f^{-1}(f(x))$ in each component of $M$ is connected -- this then implies that $d_{x}$ has a unique local maximum on each component of $M$, which then implies that each component of $M$ is homeomorphic to $S^{m}$ and is embedded in $\Hcal$ by $f$. \\ 

If $m = 1$, each component of $M$ is diffeomorphic to $S^{1}$.  In this case, if at least one component of $M$ is embedded, it follows from the fact that all components of $M$ have the same image via $f$ that the image of $f$ is an embedded submanifold of $M$ homeomorphic to $S^{1}$.  This is also true in the case $m=1$ if none of the components of $M$ are embedded.  To see this, let $x_{1}, x_{2}$ be points in $M$ with the following properties:  $x_{1}, x_{2}$ belong to the same component $M_{0}$ of $M$, $f(x_{1}) = f(x_{2})$, there are no other points $x_{0}$ in the oriented segment $[x_{1}, x_{2}]$ with $f(x_{0}) = f(x_{1}) = f(x_{2})$, and the length of the oriented segment $[x_{1}, x_{2}]$ is minimal among such pairs of points.  Letting $y$ be a local maximum of $d_{x_{1}} = d_{x_{2}}$, it follows from the conditions on $J^{\star}$ derived above that $M'(x_{1}) = M'(x_{2}) = -\sn_{\Kcal}(r)\Jcal(M'(y))$.  Let $\check{M}$ be a manifold which is identical to $M$, except that in place of the component $M_{0}$, $\check{M}$ has two components $M_{1}, M_{2}$ homeomorphic to $S^{1}$, and let $\check{f}:\check{M} \rightarrow \Hcal$ be a mapping which coincides with $f$ on $\check{M} \setminus (M_{1} \cup M_{2})$, which coincides with $f|_{[x_{1},x_{2}]}$ on $M_{1}$ and which coincides with $f|_{M_{0} \setminus [x_{1},x_{2}]}$ on $M_{2}$, with $M_{1}, M_{2}$ oriented so that $\check{f}$ is orientation-preserving on intervals where it coincides with $f$.  Let $\check{x}_{1}$, $\check{x}_{2}$ denote the points in $\check{M}$ corresponding to $x_{1}$, $x_{2}$.  Because equality holds for $f:M \rightarrow \Hcal$, it also holds for $\check{f}:\check{M} \rightarrow \Hcal$, and $\check{f}|_{M_{1}}$ is an embedding.  A priori, $\check{M}$ and $\check{f}$ may only be $C^{1}$ at $\check{x}_{1}$ and $\check{x}_{2}$, but the derivation above of the second fundamental form in the normal direction $l' = B_{1} = grad(\overline{d}_{x})$ and the Hessian of $d_{x}$ at $y_{0}$ remain valid for $y_{0} = \check{x}_{1}$, $\check{x}_{2}$.  The conclusion that all critical points $y_{0}$ of $d_{x}$ with $\check{f}(y_{0}) \neq \check{f}(x)$ are local maxima therefore remains valid for all $x \in \check{M}$, including those for which $\check{x}_{1}, \check{x}_{2}$ are local maxima.  As above, we therefore have that all components of $\check{M}$ have the same image via $\check{f}$.  This implies the same for $M$ via $f$, and therefore that the image of $f:M \rightarrow \Hcal$ is embedded. \\ 
%The statement about the Hessian implies that the derivative of $d_{x}$ is itself a continuously differentiable function in a neighborhood of a local maximum $y_{0}$.  

When equality holds, the image of $f$ is therefore a closed, embedded submanifold of $\Hcal$ homeomorphic to $S^{m}$.  It follows from the fact that $sgn(det(J^{\star})) = (-1)^{(\binom{m+1}{2} + 1)}$ for all $x,y$ with $f(x) \neq f(y)$ that at all pairs of points with the same image via $f$, the orientations of their tangent spaces, which coincide, are consistent.  The image of $f$ is therefore a canonically oriented, closed, embedded submanifold of $\Hcal$, and we will identify $M$ with this image for the remainder of the proof. \\ 

For each $x$ in $M$, there is a unique point $A(x)$ where $d_{x}$ attains its maximum.  When $m \geq 2$, this follows from the fact established above that for each $x \in M$, $d_{x}$ has a unique local maximum on each component of $M$.  When $m = 1$, this follows from the fact that any segment between two local maxima of $d_{x}$ would contain a local minimum.  Because the geodesic chord from $x$ to $A(x)$ in $\Hcal$ meets $M$ orthogonally at both endpoints, it follows from the first variation formula that the distance between $x$ and $A(x)$ in $\Hcal$ is constant.  We will write $r_{0}$ for this common value of $r(x,A(x))$. \\  

We will now show that all geodesic chords $l(x,A(x))$, for all $x \in M$, have the same midpoint $p_{0}$ in $\Hcal$, and therefore that $M$ is contained in the geodesic sphere $S_{p_{0}}(\frac{r_{0}}{2})$ of radius $\frac{r_{0}}{2}$ about $p_{0}$ in $\Hcal$. \\  

The unit tangent to the geodesic chord from $A(x)$ to $x$ at $x$ gives a unit normal vector field to $M$, which we will denote $\nu$.  By what we have shown above, $M$ is umbilic in the normal direction $\nu$, with second fundamental form given by:  

\begin{equation}
\label{umbilic_II}
\displaystyle II_{\nu} = -(\ct_{\Kcal}(r_{0}) + \frac{1}{\sn_{\Kcal}(r_{0})})Id_{T_{x}M}. \bigskip 
\end{equation}

The mapping $x \to A(x)$ is an order $2$, fixed point free diffeomorphism of $M$ and can be written:

\begin{equation}
\label{antipodal_formula}
\displaystyle A(x) = Exp_{x}^{\Hcal}(-r_{0}\nu). \bigskip 
\end{equation}

For a tangent vector $E$ to $M$ at $x$, $dA_{x}(E)$ can be described in terms of the Jacobi field $J^{*}$ along $l(x,A(x))$ with $J^{*}(0) = E, J^{*'}(0) = \nabla_{E}(-\nu)$:  

\begin{equation}
\displaystyle dA_{x}(E) = J^{*}(r_{0}). \bigskip 
\end{equation}
 
It follows from (\ref{umbilic_II}) that $\nabla_{E}(-\nu) = -(\ct_{\Kcal}(r_{0}) + \frac{1}{\sn_{\Kcal}(r_{0})})E + (\nabla_{E}(-\nu))^{\dagger}$, where $(\nabla_{E}(-\nu))^{\dagger}$ is orthogonal to $T_{x}M$.  We will see in a moment that $(\nabla_{E}(-\nu))^{\dagger} = 0$.  This is immediate if $m = (n-1)$.  Letting $J_{E}$ be the Jacobi field along $l(x,A(x))$ with $J_{E}(0) = 0, J_{E}'(0) = E$ and letting $J^{\dagger}$ be the Jacobi field with $J^{\dagger}(0) = 0, J^{\dagger '}(0) = (\nabla_{E}(-\nu))^{\dagger}$, it follows from (\ref{jac_trans_bd_pf_eqn_2}) that for $t > 0$ the Jacobi field $J^{*}(t)$ is given by:  

\begin{equation}
\label{midpoint_jac_eqn}
\displaystyle J^{*}(t) = \left( \ct_{\Kcal}(t) - \ct_{\Kcal}(r_{0}) - \frac{1}{\sn_{\Kcal}(r_{0})} \right)J_{E}(t) + J^{\dagger}(t). \bigskip 
\end{equation}

Because $Exp_{x}^{\Hcal}(-r_{0}\nu) \in M$ for all $x \in M$, $J^{*}(r_{0})$ is tangent to $M$ at $A(x)$.  Because equality holds in Proposition \ref{key_prop}, this implies via Proposition \ref{Jac_trans_bd} that $J^{\dagger}(r_{0}) = 0$, and therefore that $J^{\dagger '}(r_{0}) = (\nabla_{E}(-\nu))^{\dagger} = 0$.  Because $\ct_{\Kcal}(\frac{r_{0}}{2}) - \ct_{\Kcal}(r_{0}) - \frac{1}{\sn_{\Kcal}(r_{0})} = 0$, it follows from (\ref{midpoint_jac_eqn}) that $J^{*}(\frac{r_{0}}{2}) = 0$.  Letting $\xi: M \rightarrow \Hcal$ be the map which sends points $x \in M$ to the midpoint of the geodesic chord $l(x,A(x))$, we then have $d\xi_{x}(E) = J^{*}(\frac{r_{0}}{2}) \equiv 0$, so $\xi$ is a constant map and all geodesic chords from $x$ to $A(x)$, $x \in M$, have the same midpoint $p_{0}$.  This implies that $M$ is contained in the geodesic sphere $S_{(\frac{r_{0}}{2})}(p_{0})$ of radius $\frac{r_{0}}{2}$ about $p_{0}$ in $\Hcal$. \\  

When $M$ is a hypersurface, this implies that $M = S_{(\frac{r_{0}}{2})}(p_{0})$.  In this case, the geodesic ball $B_{(\frac{r_{0}}{2})}(p_{0})$ of radius $\frac{r_{0}}{2}$ about $p_{0}$ has constant curvature $\mathcal{K}$ -- this follows from the fact that for any $p \in B_{(\frac{r_{0}}{2})}(p_{0})$ and any unit tangent vector $\vec{u}$ to $\Hcal$ at $p$, $\vec{u}$ is tangent to a geodesic chord of $M$.  Because equality holds in the Rauch comparison theorem for all Jacobi fields along all geodesic chords of $M$, this implies that any $2$-plane in $T_{p}\Hcal$ containing $\vec{u}$ has sectional curvature $\mathcal{K}$.  When the codimension of $M$ is greater than $1$, it follows from what we have established above that $M$ is the base of a geodesic cone in $\Hcal$ with vertex $p_{0}$, in other words, the union of the geodesic segments $l(p_{0},x)$ from $p_{0}$ to $x \in M$.  We will denote this geodesic cone $C(p_{0},M)$.  It also follows from what we have seen that along each geodesic segment $l(p_{0},x)$, any $2$-plane tangent to $C(p_{0},M)$ which contains $l'$ has sectional curvature $\mathcal{K}$ in $\Hcal$.  For each $x_{0} \in M$, we will likewise write $C(x_{0},M)$ for the geodesic cone in $\Hcal$ with vertex $x_{0}$ and base $M$.  The same conclusion holds for the geodesic segments $l(x_{0},x)$ comprising $C(x_{0},M)$:  any $2$-plane tangent to $C(x_{0},M)$ which contains $l'$ has sectional curvature $\mathcal{K}$ in $\Hcal$.  We will finish proving the characterization of equality by showing that the geodesic cones $C(x_{0},M)$, for all $x_{0} \in M$, coincide, and form an embedded totally geodesic submanifold $\mathcal{D}$ isometric to a ball of radius $\frac{r_{0}}{2}$ in a space of constant curvature $\mathcal{K}$.  This submanifold $\mathcal{D}$ coincides with the geodesic cone $C(p_{0},M)$ with base $M$ and vertex $p_{0}$. \\ 

In order to show that the geodesic cones $C(x_{0},M)$ coincide, we will show that the functions $r, \sigma$ on $M \times M$ are functions of one another and satisfy the same relationships as the corresponding functions on a geodesic sphere of radius $\frac{r_{0}}{2}$ in a space of constant curvature $\mathcal{K}$, which we record in (\ref{K_zero_eqn}) and (\ref{K_neg_eqn}) below. \\ 

For fixed $x \in M$, on $M \setminus \lbrace x, A(x) \rbrace$, we have seen that $grad(r) = \cos(\sigma) \widehat{B}_{2}$ and $grad(\sigma) = \frac{\sin(\sigma)}{\sn_{\Kcal}(r)} \widehat{B}_{2}$.  This implies that along any integral curve $c:[0,a] \rightarrow M$ of $\widehat{B}_{2}$, $r(c(s))$ and $\sigma(c(s))$ satisfy the following system of ordinary differential equations: 

\begin{equation*}
\displaystyle \dot{r} = \cos(\sigma), 
\end{equation*}
\begin{equation}
\label{bpch_pf_diff_eqn}
\displaystyle \dot{\sigma} = \frac{\sin(\sigma)}{\sn_{\Kcal}(r)}. \bigskip 
\end{equation}

By (\ref{path_limiting}), any unit tangent vector to $M$ at $A(x)$ is equal to $c'(0)$ for a smooth path $c:[0,a] \rightarrow M$, such that $\dot{c}(s) = -\widehat{B}_{2}$ for $s \in (0,a]$.  By (\ref{bpch_pf_diff_eqn}), along such a path, $r(c(s))$ and $\sigma(c(s))$ solve the same system of ordinary differential equations as the corresponding functions along the equivalent trajectory in a sphere of radius $\frac{r_{0}}{2}$ in a space of constant curvature $\mathcal{K}$, with the same initial conditions.  Therefore, $r(c(s))$ and $\sigma(c(s))$ have the same values at $c(s)$ as on a geodesic sphere in the constant curvature model space. \\ 

When $\mathcal{K} = 0$, this implies that: 

\begin{equation}
\label{K_zero_eqn}
\displaystyle r = r_{0}\sin(\sigma), \bigskip 
\end{equation}
and when $\mathcal{K} < 0$, this implies:  

\begin{equation*}
\displaystyle \cs_{\Kcal}(\frac{r_{0}}{2}) = \cs_{\Kcal}(\frac{r_{0}}{2})\cs_{\Kcal}(r) + \mathcal{K} \sn_{\Kcal}(\frac{r_{0}}{2})\sn_{\Kcal}(r)\sin(\sigma)  
\end{equation*}
\begin{equation}
\label{K_neg_eqn}
\displaystyle \iff \sinh(\sqrt{|\Kcal|}(\frac{r_{0}}{2}))\sinh(\sqrt{|\Kcal|}r)\sin(\sigma) = \cosh(\sqrt{|\Kcal|}(\frac{r_{0}}{2}))\left(\cosh(\sqrt{|\Kcal|}r) - 1 \right). \bigskip 
\end{equation}

By what we have established above, $M \setminus \lbrace x, A(x) \rbrace$ is foliated by the trajectories of $\widehat{B}_{2}$, so this relationship between $r(x,y)$ and $\sigma(x,y)$ holds for all $y \in M$.  Because $x$ is arbitrary, it holds for all $(x,y) \in M \times M$. \\  

Next, we will show that for each $x_{0}$ in $M$ the geodesic cone $C(x_{0},M)$, and $M$ itself, are contained in the image via the exponential map $Exp_{x_{0}}^{\Hcal}$ of the $(m+1)$-dimensional subspace of $T_{x_{0}}\Hcal$ spanned by $T_{x_{0}}M$ and $\nu$.   To see this, consider a point $y$ in $M$ and the geodesic triangle in $\Hcal$ with vertices $p_{0}, x_{0}$ and $y$.  The length of the segments $\overline{x_{0}p_{0}}$ and $\overline{p_{0}y}$ is $\frac{r_{0}}{2}$ and the length of $\overline{y x_{0}}$ is $r(x_{0},y)$.  Letting $\theta_{p}, \theta_{x}$ and $\theta_{y}$ be the angles of this geodesic triangle at $p_{0}, x_{0}$ and $y$ respectively, we have $\theta_{x}, \theta_{y} \geq \frac{\pi}{2} - \sigma(x_{0},y)$. \\  

If $\mathcal{K} = 0$, the Rauch comparison theorem, (\ref{K_zero_eqn}) and the law of cosines imply: 

\begin{equation*}
\displaystyle \left(\frac{r_{0}}{2}\right)^{2} \geq \left(\frac{r_{0}}{2}\right)^{2} + r_{0}^{2}\sin^{2}(\sigma(x_{0},y)) - 2\left(\frac{r_{0}}{2}\right) r_{0} \sin(\sigma(x_{0},y))\cos(\theta_{x}) 
\end{equation*}
\begin{equation*}
\displaystyle \implies \sin(\sigma(x_{0},y))\cos(\theta_{x}) \geq \sin^{2}(\sigma(x_{0},y)). \bigskip 
\end{equation*}

If $\mathcal{K} < 0$, the Rauch comparison theorem and the hyperbolic law of cosines imply: 

\begin{equation*}
\displaystyle \cs_{\Kcal}(\frac{r_{0}}{2}) \geq \cs_{\Kcal}(\frac{r_{0}}{2}) \cs_{\Kcal}(r(x_{0},y)) + \mathcal{K} \sn_{\Kcal}(\frac{r_{0}}{2})\sn_{\Kcal}(r(x_{0},y))\cos(\theta_{x})  
\end{equation*}
\begin{equation*}
\displaystyle \iff \sinh(\sqrt{|\Kcal|}(\frac{r_{0}}{2}))\sinh(\sqrt{|\Kcal|}r)(x_{0},y))\cos(\theta_{x}) = \cosh(\sqrt{|\Kcal|}(\frac{r_{0}}{2}))\left(\cosh(\sqrt{|\Kcal|}r(x_{0},y)) - 1 \right). \bigskip 
\end{equation*}

In both cases, using (\ref{K_neg_eqn}) when $\mathcal{K} < 0$, this implies that $\theta_{x} = \frac{\pi}{2} - \sigma(x_{0},y)$.  Letting $l(x_{0},y)$ be the geodesic segment from $x_{0}$ to $y$ and $l'(0)$ its initial tangent vector, this implies that $l'(0) = \cos(\sigma) \widehat{E}_{2} + \sin(\sigma)\nu$.  This then implies that $M$, and the geodesic cone $C(x_{0},M)$, are contained in $Exp_{x_{0}}^{\Hcal}(T_{x_{0}}M \oplus span(\nu))$. \\  

To establish that all geodesic cones $C(x_{0},M)$ for $x_{0} \in M$ coincide, let $x$, $y$ and $z$ be three points in $M$, and let $\theta$ be the angle between the geodesic segments $l(x,y)$ and $l(x,z)$ at $x$.  Let $\widetilde{x},\widetilde{y},\widetilde{z}$ be the vertices of a geodesic triangle in the model space $\mathcal{M}_{\Kcal}$ of constant curvature $\mathcal{K}$ with $dist(\widetilde{x},\widetilde{y}) = r(x,y)$, $dist(\widetilde{x},\widetilde{z}) = r(x,z)$, and with angle $\theta$ at $\widetilde{x}$.  By the Rauch comparison theorem, $r(y,z) \geq dist(\widetilde{y},\widetilde{z})$.  On the other hand, because for all $w \in M$, equality holds in the Rauch comparison theorem for all Jacobi fields along $l(x,w)$ and tangent to $C(x,M)$, we can transfer the parametrization in normal coordinates about $\widetilde{x}$ of the geodesic segment $\overline{\widetilde{y},\widetilde{z}}$ from $\mathcal{M}_{\Kcal}$ to $\Hcal$ via $Exp_{x}^{\Hcal}$ to obtain a curve from $y$ to $z$, of length $dist(\widetilde{y},\widetilde{z})$.  This curve must therefore be $l(y,z)$, and by construction it is contained in $C(x,M)$.  This then implies $C(y,M) \subseteq C(x,M)$, and therefore that all geodesic cones $C(x_{0},M)$ for $x_{0}$ in $M$ coincide. \\ 

We will denote the common subset of $\Hcal$ described by $\lbrace C(x,M):x \in M \rbrace$ by $\mathcal{D}$.  It is straightforward to check that $\mathcal{D}$ is an embedded, totally geodesic submanifold of $\Hcal$ with constant curvature $\mathcal{K}$, diffeomorphic to a disk, that it coincides with the cone $C(p_{0},M)$ described above, and that it is a metric ball of radius $\frac{r_{0}}{2}$ about $p_{0}$.  By the Cartan-Ambrose-Hicks theorem, any such manifold $\mathcal{D}$ is isometric to a geodesic ball of radius $\frac{r_{0}}{2}$ in a space $\mathcal{M}_{\Kcal}$ of constant curvature $\mathcal{K}$.  It is also straightforward to check the converse -- that equality holds in Theorem \ref{main_thm_2} for the boundary of any totally geodesic submanifold $\mathcal{D}$ isometric to such a geodesic ball. \end{proof}

For a hypersurface $M$ in $\Hcal$, the following formula for $\int_{\Hcal} w(M,p)^{2} dp$ will be used in the proofs of Theorems \ref{main_thm_1} and \ref{generalization}.  In particular, once we have established this result, Theorem \ref{main_thm_1} will follow from Theorem \ref{main_thm_2}: 

\begin{prop}
\label{hyp_main_thm}
	
Let $f: M^{n-1} \rightarrow \Hcal^{n}$ be an immersion of a closed, oriented hypersurface $M$ into a Cartan-Hadamard manifold $\Hcal$, and let $w(M,p)$ be the winding number of $M$ about $p$ in $\Hcal$.  Then:  
	
\begin{equation}
\label{hyp_main_thm_eqn}
\displaystyle \int\limits_{\Hcal} w(M,p)^{2} \ dVol_{\Hcal} = \frac{(-1)^{(\binom{n}{2} + 1)}}{(n-1)! \Sigma_{n-1}} \int\limits_{S(M)} r \ l^{*}(dI)^{n-1}. \medskip
\end{equation}
\end{prop}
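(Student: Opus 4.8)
The plan is to rewrite both sides of \eqref{hyp_main_thm_eqn} as integrals over the space of geodesics $\Gcal$ and to match the resulting integrands geodesic by geodesic. For the right-hand side I use the secant change of variables, and for the left-hand side a Santal\'o-type disintegration of $dVol_{\Hcal}$; the two representations are then reconciled by a short computation with winding numbers along a single geodesic.

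\emph{The right-hand side.} Since $\dim(M\times M)=2n-2=\dim\Gcal$, the form $l^{*}(dI)^{n-1}$ is top-degree on $S(M)$, and by Definition \ref{G_measure} it equals $(-1)^{\binom{n}{2}}(n-1)!\,l^{*}(dVol_{\Gcal})$. First I would check absolute convergence: by \eqref{key_prop_eqn_2} (and \eqref{key_prop_eqn_1_dim_1} when $n=2$) together with $\sn_{\Kcal}(r)\geq r$, one has $r\,|l^{*}(dI)^{n-1}|\leq (n-1)!\,r^{-(n-2)}\,dVol_{S(M)}$, which is integrable by the finiteness of $\iint_{M\times M}r^{-(n-2)}$ established later in this section. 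With this in hand, I apply the signed change-of-variables formula to $l$. For almost every oriented geodesic $\gamma$ the immersion is transverse to $\gamma$, so $f^{-1}(\gamma)=\{x_{1},\dots,x_{k}\}$ has distinct images at arclength parameters $t_{1}<\cdots<t_{k}$; the preimage $l^{-1}(\gamma)$ consists exactly of the ordered pairs $(x_{i},x_{j})$ with $i<j$ (those for which $\gamma$ runs from $f(x_{i})$ to $f(x_{j})$), with $r(x_{i},x_{j})=t_{j}-t_{i}$. By Corollary \ref{sign_cor} the Jacobian sign of $l$ at $(x_{i},x_{j})$ is $\epsilon_{i}\epsilon_{j}$, where $\epsilon_{j}:=i_{x_{j}}(\gamma,M)$. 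Hence, writing $S(\gamma):=\sum_{i<j}(t_{j}-t_{i})\,\epsilon_{i}\epsilon_{j}$,
\[
\int\limits_{S(M)} r\,l^{*}(dI)^{n-1}=(-1)^{\binom{n}{2}}(n-1)!\int\limits_{\Gcal}S(\gamma)\,dVol_{\Gcal},
\]
so that multiplying by the constant in \eqref{hyp_main_thm_eqn} collapses the two factors of $(-1)^{\binom{n}{2}}$ and gives $\tfrac{-1}{\Sigma_{n-1}}\int_{\Gcal}S(\gamma)\,dVol_{\Gcal}$.

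\emph{The left-hand side.} Let $\pi:U(\Hcal)\to\Hcal$ be the footpoint projection, so that $\int_{U(\Hcal)}(h\circ\pi)\,dVol_{U(\Hcal)}=\Sigma_{n-1}\int_{\Hcal}h\,dVol_{\Hcal}$, the fibre $U_{p}\Hcal\cong S^{n-1}$ having volume $\Sigma_{n-1}$. Using the identity $\alpha\wedge\Gamma^{*}(dVol_{\Gcal})=dVol_{\Hcal}\wedge dU$ recorded after Definition \ref{G_measure} and Fubini along the fibres of $\Gamma$ (oriented geodesics, on which $\alpha$ restricts to the arclength element $dt$), with $h=w(M,\cdot)^{2}$ and $W_{\gamma}(t):=w(M,\gamma(t))$ I obtain
\[
\int\limits_{\Hcal} w(M,p)^{2}\,dVol_{\Hcal}=\frac{1}{\Sigma_{n-1}}\int\limits_{\Gcal}\left(\int_{-\infty}^{\infty}W_{\gamma}(t)^{2}\,dt\right)dVol_{\Gcal}.
\]
It remains to prove the pointwise identity $\int_{-\infty}^{\infty}W_{\gamma}(t)^{2}\,dt=-S(\gamma)$ for almost every $\gamma$. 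Along $\gamma$ the winding number is the signed count of crossings of the forward ray with $M$, so $W_{\gamma}(t)=\sum_{t_{j}>t}\epsilon_{j}$ is a step function jumping by $\epsilon_{j}$ at $t_{j}$; since $w(M,p)\to 0$ as $p\to\infty$, $W_{\gamma}$ vanishes near $\pm\infty$, which forces $\sum_{j}\epsilon_{j}=0$ and makes $W_{\gamma}$ compactly supported. Using $\int_{-T}^{T}\mathbf{1}[\min(t_{i},t_{j})>t]\,dt=\min(t_{i},t_{j})+T$ and $\sum_{i,j}\epsilon_{i}\epsilon_{j}=(\sum_{i}\epsilon_{i})^{2}=0$ to kill the divergent $T$-term, I get $\int_{-\infty}^{\infty}W_{\gamma}^{2}\,dt=\sum_{i,j}\epsilon_{i}\epsilon_{j}\min(t_{i},t_{j})$, and a second application of $\sum_{i}\epsilon_{i}=0$ rewrites this as $-\sum_{i<j}(t_{j}-t_{i})\epsilon_{i}\epsilon_{j}=-S(\gamma)$. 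Substituting into the two displays matches the two sides of \eqref{hyp_main_thm_eqn}, signs and constants included.

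\emph{Main obstacle.} The delicate points are the sign and constant bookkeeping (the two factors of $(-1)^{\binom{n}{2}}$ must cancel, and the extra $(-1)$ in the stated constant must be exactly the sign produced by $\int W_{\gamma}^{2}=-S(\gamma)$) and the rigorous justification of the two Fubini/coarea manipulations. The degree form of the change-of-variables formula for $l$ is legitimate precisely because of the absolute-convergence bound above and Corollary \ref{sign_cor}, which supplies the orientation signs; without them the term-by-term integrals diverge. The one remaining subtlety is that $M$ is merely immersed, so that a priori $f^{-1}(\gamma)$ could contain points with coincident images; I would dispose of this by noting that the geodesics passing through a self-intersection direction of $f(M)$ form a null set in $\Gcal$, so the pointwise identity holds for almost every $\gamma$ and the global identity follows by integration.
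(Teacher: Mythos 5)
Your proposal is correct and follows essentially the same route as the paper: both disintegrate $\int_{\Hcal}w(M,p)^{2}\,dVol_{\Hcal}$ over $\Gcal$ via the Santal\'o identity, reduce to a per-geodesic sum $\sum_{i<j}(t_{j}-t_{i})\epsilon_{i}\epsilon_{j}$ over pairs of intersection points, and convert the secant integral by the signed change of variables using Corollary \ref{sign_cor}. The only (cosmetic) difference is that the paper expands $w(M,\gamma(t))^{2}$ as the product of the forward- and backward-ray intersection counts, which yields the cross terms $i_{x}i_{y}$ directly, whereas you square the forward count and recover the same expression via $\sum_{j}\epsilon_{j}=0$ and the $\min(t_{i},t_{j})$ manipulation.
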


\begin{proof} For a point $p$ in $\Hcal$ and a unit tangent vector $\vec{u}$ to $\Hcal$ at $p$, let $\rho_{\vec{u}} : [0,\infty) \rightarrow \Hcal$ be the geodesic ray determined by $\vec{u}$ as in Definition \ref{average}.  Then for almost all $\vec{u} \in U_{p}\Hcal$, at almost all $p \in \Hcal$: 
	
\begin{equation}
\label{winding_formula}
\displaystyle w(M,p) = \sum\limits_{x \in f^{-1}(\rho_{\vec{u}})} i_{x}(\rho_{\vec{u}},M), \bigskip
\end{equation}

We therefore have:  
	
\begin{equation*}
%\label{hyp_main_thm_pf_eqn_1}
\displaystyle \int\limits_{\Hcal} w(M,p)^{2} \ dVol_{\Hcal} = \frac{1}{\Sigma_{n-1}} \int\limits_{U(\Hcal)} (\sum\limits_{f^{-1}(\rho_{\vec{u}})} i_{x}(\rho_{\vec{u}},M)) ( \sum\limits_{f^{-1}(\rho_{-\vec{u}})} i_{y}(\rho_{-\vec{u}},M)) \ dVol_{U(\Hcal)}. \bigskip
\end{equation*}
	
For a point $\gamma(t)$ on an oriented geodesic $\gamma$ in $\Hcal$, let $M_{\gamma(t)}^{+} = \lbrace f^{-1}(\gamma(\tau)) : \tau > t \rbrace$ and $M_{\gamma(t)}^{-} = \lbrace f^{-1}(\gamma(\tau)) : \tau < t \rbrace$.  Noting that, for a point $y$ along the geodesic $\gamma_{\vec{u}}$ whose orientation is determined by $\vec{u}$, $i_{y}(\rho_{-\vec{u}},M)$ is equal to $-i_{y}(\gamma_{\vec{u}},M)$, we then have:  
	
\begin{equation}
\label{hyp_main_thm_pf_eqn_2}
\displaystyle \int\limits_{\Hcal} w(M,p)^{2} \ dVol_{\Hcal} = \frac{-1}{\Sigma_{n-1}} \int\limits_{\Gcal} \int\limits_{\gamma} ( \sum\limits_{M_{\gamma(t)}^{+}} i_{x}(\gamma,M) ) ( \sum\limits_{M_{\gamma(t)}^{-}} i_{y}(\gamma,M) ) \ dt \ dVol_{\Gcal}. \bigskip  
\end{equation}  
	
For an oriented geodesic $\gamma$, let $M^{\star}_{\gamma}$ be defined by:  

\begin{equation*}
\displaystyle M^{\star}_{\gamma} = \lbrace (x,y) \in M \times M: f(x), f(y) \in \gamma, \ f(y) = \gamma(t_{2}), f(x) = \gamma(t_{1})\ \text{with $t_{2} > t_{1}$.} \rbrace \bigskip  
\end{equation*}
%\displaystyle M^{\star}_{\gamma} = \lbrace (x,y) \in M \times M: f(x), f(y) \in \gamma, \ \text{$f(y)$ is ahead of $f(x)$ in the orientation determined by $\gamma$.} \rbrace \bigskip  
Then by (\ref{hyp_main_thm_pf_eqn_2}), 
	
\begin{equation}
\displaystyle \int\limits_{\Hcal} w(M,p)^{2} \ dVol_{\Hcal} = \frac{-1}{\Sigma_{n-1}} \int\limits_{\Gcal} \sum\limits_{M^{\star}_{\gamma}} i_{x}(\gamma,M) i_{y}(\gamma,M) r(x,y) dVol_{\Gcal}. \bigskip 
\end{equation}
	
By Corollary \ref{sign_cor}, $i_{x}(\gamma,M) i_{y}(\gamma,M) = sgn(det(dl))_{(x,y)}$, so by a change of variables,   
	
\begin{equation}
\displaystyle \int\limits_{\Hcal} w(M,p)^{2} \ dVol_{\Hcal} = \frac{-1}{\Sigma_{n-1}} \int\limits_{S(M)} r \ l^*(dVol_{\Gcal}) = \frac{(-1)^{(\binom{n}{2} + 1)}}{(n-1)! \Sigma_{n-1}} \int\limits_{S(M)} r \ l^*(dI)^{n-1}.  
\end{equation}
\end{proof}

\begin{proof}[Proof of Theorem \ref{main_thm_1}] Let $f:M^{n-1} \rightarrow \Hcal^{n}$ be as in Theorem \ref{main_thm_1}.  Assuming only that the sectional curvature of $\Hcal$ is non-positive, Theorem \ref{main_thm_1} follows immediately from Theorem \ref{main_thm_2} and Proposition \ref{hyp_main_thm}.  If the sectional curvature of $\Hcal$ is bounded above by $\mathcal{K} < 0$, the stronger result in Part B of Theorem \ref{main_thm_1} follows from Proposition \ref{hyp_main_thm} and (\ref{bpch_pf_eqn_4}) in the proof of Theorem \ref{main_thm_1}.  \end{proof}

It would be interesting to know whether, when $D_{m} \int_{S(M)} r l^{*}(dI)^{m}$ does not have a geometric interpretation in terms of linking or winding numbers, it is nonetheless positive.  The proof of Theorem \ref{main_thm_2} shows that $|D_{m} \int_{S(M)} r l^{*}(dI)^{m}| \leq \iint_{M \times M} \frac{1}{\sn_{\Kcal}^{m-1}(r)} dVol_{M \times M}$ regardless of the sign of $D_{m} \int_{S(M)} r l^{*}(dI)^{m}$.  We end this section with the following result, which shows that this upper bound is finite.  

\begin{prop}
\label{finiteness}
	
Let $f:M^{m} \rightarrow \Hcal^{n}$ be an immersion of a closed, oriented $m$-manifold $M$ into a Cartan-Hadamard manifold $\Hcal$.  For $(x,y) \in M \times M$, let $r(x,y)$ be the distance between $f(x)$ and $f(y)$ in $\Hcal$. \\ 
	
Then $\frac{1}{r^{m-1}} \in L^{1}(M \times M)$. \end{prop}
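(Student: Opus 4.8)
The plan is to prove integrability by a layer-cake (distribution function) estimate, exploiting that the exponent $m-1$ is strictly less than the dimension $m$ of the manifold over which one integrates transversally to the singular set. When $m=1$ the integrand is identically $1$ and the statement is the trivial bound $\iint_{M\times M} 1\, dVol_{M\times M} = Vol(M)^{2}<\infty$, so I assume $m\geq 2$ throughout. The singular set of $\frac{1}{r^{m-1}}$ is $\Sigma = \{(x,y):f(x)=f(y)\}$, which contains the diagonal but also the self-intersection locus of $f$; the main point of the argument is that both kinds of singularity can be controlled at once, uniformly in $x$.

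First I would record a uniform injectivity scale for the immersion. Since $M$ is compact and $f$ is a $C^{1}$ immersion, there exist constants $\varepsilon_{0}>0$ and $c_{0}>0$ such that for every $z\in M$ the restriction of $f$ to the geodesic ball $B_{M}(z,\varepsilon_{0})$ is an embedding satisfying $d_{\Hcal}(f(y),f(y'))\geq c_{0}\,d_{M}(y,y')$ for all $y,y'\in B_{M}(z,\varepsilon_{0})$. Cover $M$ by finitely many balls $V_{1},\dots,V_{k}$ of radius $\varepsilon_{0}/4$. The crucial estimate is then a uniform bound on the sublevel sets of $r(x,\cdot)$: for each $i$ there is a constant $C$, independent of $x\in M$ and of $\rho$, with
\[
Vol_{M}\big(\{\, y\in V_{i} : r(x,y)<\rho \,\}\big) \leq C\,\rho^{m}
\]
for all $\rho$ below a fixed threshold (and the trivial bound $Vol_{M}(V_{i})$ for larger $\rho$). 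To see this, if the set is nonempty pick $y_{*}$ in it; for any other $y$ in the set the triangle inequality gives $d_{\Hcal}(f(y),f(y_{*}))<2\rho$, and the bi-Lipschitz lower bound on $B_{M}(y_{*},\varepsilon_{0})$ forces $d_{M}(y,y_{*})<2\rho/c_{0}$, so the whole set lies in a single geodesic ball of radius $2\rho/c_{0}$, whose volume is at most $C\rho^{m}$ for small $\rho$ by the standard volume comparison on the compact manifold $M$.

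Granting this, I would bound the inner integral for fixed $x$ by the layer-cake formula
\[
\int_{V_{i}} \frac{dVol_{M}(y)}{r(x,y)^{m-1}} = \int_{0}^{\infty} Vol_{M}\big(\{\, y\in V_{i} : r(x,y)<\lambda^{-1/(m-1)} \,\}\big)\, d\lambda
\]
and split the $\lambda$-integral at the threshold. On the large-$\lambda$ range the integrand is $\leq C\lambda^{-m/(m-1)}$, which is integrable at infinity precisely because $\tfrac{m}{m-1}>1$; on the bounded range the integrand is at most $Vol_{M}(M)$. Hence $\int_{V_{i}} r(x,\cdot)^{-(m-1)}\,dVol_{M}$ is bounded by a constant independent of $x$. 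Summing over $i$ gives $\int_{M} r(x,y)^{-(m-1)}\,dVol_{M}(y)\leq C'$ uniformly in $x$, and Fubini yields $\iint_{M\times M} r^{-(m-1)}\,dVol_{M\times M}\leq C'\,Vol(M)<\infty$.

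The main obstacle is obtaining the sublevel-volume bound uniformly in $x$, which is exactly what allows a single argument to absorb the diagonal singularity and the self-intersections of $f$ simultaneously; this in turn rests entirely on the uniform injectivity scale $(\varepsilon_{0},c_{0})$, whose existence is the one place where the compactness of $M$ and the immersion hypothesis are essential. Once these uniform constants are in hand, the dimensional inequality $m-1<m$ does the rest.
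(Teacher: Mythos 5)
Your proof is correct, and it takes a genuinely different route from the paper's. The paper localizes around each point of $r^{-1}(0)$, identifies the singular set explicitly as a union of $m$-dimensional submanifolds of $M\times M$ (the diagonal, and near an off-diagonal self-intersection the graph $\{(x,\eta(x))\}$ of a nearest-point map $\eta$), and then computes the integral in Fermi coordinates about these submanifolds, using that $r$ vanishes only to first order in the normal directions. You instead never describe the singular set at all: the uniform injectivity scale $(\varepsilon_0,c_0)$ plus the triangle-inequality trick give the sublevel bound $Vol_M(\{y\in V_i: r(x,y)<\rho\})\leq C\rho^m$ uniformly in $x$, and the layer-cake formula with exponent $m/(m-1)>1$ finishes the argument. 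The one point worth making explicit is that $\{y\in V_i: f(y)=f(x)\}$ is at most a single point (it follows from your own bi-Lipschitz bound with $\rho\to 0$), so the layer-cake identity applies to the a.e.-finite function $r(x,\cdot)^{-(m-1)}$. What each approach buys: the paper's argument exhibits the geometry of $r^{-1}(0)$ and the transversal behavior of $r$, which is in the spirit of the secant-space constructions elsewhere in the paper; yours is more robust and quantitative, yielding the stronger conclusion that $\sup_{x\in M}\int_M r(x,y)^{-(m-1)}\,dVol_M(y)<\infty$ rather than mere integrability of the double integral, and it visibly requires only $C^1$ regularity of the immersion. Both proofs ultimately rest on the same dimensional fact, namely that the singularity is concentrated on an $m$-dimensional set in the $2m$-dimensional space $M\times M$ and the exponent satisfies $m-1<m$.
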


\begin{proof} Because $r^{-1}(0)$ is a closed subset of the compact $M \times M$, it is enough to show $\iint_{M \times M} \frac{1}{r^{m-1}} \ dVol_{M \times M}$ converges on a neighborhood of $r^{-1}(0)$, and by compactness, it is enough to show this on a neighborhood of each $(x,y) \in r^{-1}(0)$.  Letting $\varDelta$ denote the diagonal in $M \times M$, $r^{-1}(0)$ is $\varDelta$ together with points $(x,y) \in M \times M \setminus \varDelta$ such that $f(x) = f(y)$.  We will address these two cases separately.  It will be helpful to note that although $r$ is not smooth on $M \times M$, $\vec{v}(r)$ is well defined for all tangent vectors $\vec{v}$ to $M \times M$.  At any point $(x,y)$ of $r^{-1}(0)$, $\lbrace \vec{v} \ | \ \vec{v}(r) = 0 \rbrace$ is contained in a subspace of $T_{(x,y)} M \times M$ of dimension at most $m$: along $\varDelta$, this is $T_{(x,x)}\varDelta$ and for $x \not= y$, this space has at most the dimension of $df_{x}(T_{x}M) \cap df_{y}(T_{y}M)$.  Because $f$ is an immersion, there is a neighborhood of $\varDelta$, which we can take to be a tube of radius $\varepsilon$ about $\varDelta$ and denote $\varDelta_{\varepsilon}$, such that $r^{-1}(0) \cap \varDelta_{\varepsilon} = \varDelta$.  Calculating $\iint_{M \times M} \frac{1}{r^{m-1}} \ dVol_{M \times M}$ in Fermi coordinates in such a neighborhood shows that the integral converges in a neighborhood of $\varDelta$.  For points $(x_0,y_0) \in M \times M \setminus \varDelta$ with $f(x_0)=f(y_0)$, let $V_0$ be a coordinate neighborhood of $y_0$ such that $f|_{V_0}$ is injective and $x_0 \notin V_0$.  Let $N_0$ be a system of Fermi coordinates for $\Hcal$ about $f(x_0) = f(y_0)$, defined on the normal disk bundle of radius $r_0$ about $f(V_0)$, and let $U_0$ be a coordinate neighborhood of $x_0$ such that $U_0 \cap V_0 = \emptyset$ and $U_0 \subseteq f^{-1}(N_0)$.  For each $x \in U_0$, there is a unique $\eta(x) \in V_0$ such that $f(\eta(x))$ is nearest to $f(x)$.  The map $\eta: U_0 \rightarrow V_0$ is smooth, and $\lbrace (x,\eta(x)) | x \in U_0 \rbrace$ is a smooth $m$-dimensional submanifold of $M \times M$ which contains $r^{-1}(0)$ in a neighborhood of $(x_0,y_0)$.  Calculating $\iint_{M \times M} \frac{1}{r^{m-1}} \ dVol_{M \times M}$ in Fermi coordinates about $\lbrace (x,\eta(x)) \rbrace$ again shows that the integral converges in a neighborhood of $(x_0,y_0)$. \end{proof}

%%%%%%%%%%%%%%%%%%%%%%%%%%%%%%%%%%%%%%%%%%%%%%%%%%%%%%%%%%%%%%%%%%%%%%%%%%%%%%%%%%%%%%%%%%%%%%%%%%%%%%%%%%%%%%%%%%%%%%%%%

\section{Hypersurfaces and Isoperimetric Inequalities}
\label{isoperimetric_inequalities} 

%%%%%%%%%%%%%%%%%%%%%%%%%%%%%%%%%%%%%%%%%%%%%%%%%%%%%%%%%%%%%%%%%%%%%%%%%%%%%%%%%%%%%%%%%%%%%%%%%%%%%%%%%%%%%%%%%%%%%%%%%

In this section, we will prove Theorems \ref{generalization}, \ref{yau_inequality} and \ref{Howard_cor}.  We will first prove Theorem \ref{yau_inequality} and then show that Theorem \ref{Howard_cor} follows from the $2$-dimensional cases of this result and Theorem \ref{main_thm_1}.  We will then prove Theorem \ref{generalization} and discuss ways that this result may be strengthened and extended in future work. \\ 

In proving Theorems \ref{yau_inequality} and \ref{Howard_cor}, we will write $\rho_{\vec{u}}$ for the geodesic ray in $\Hcal$ determined by a unit tangent vector $\vec{u}$ to $\Hcal$, as in Definition \ref{average} and Proposition \ref{hyp_main_thm}.  We will view $\rho_{\vec{u}}$ as both a geometric object and a mapping $\rho_{\vec{u}}:(0,\infty) \rightarrow \Hcal$, in which case $\rho_{\vec{u}}$ will be parametrized with unit speed.  As above, we will write $\zeta: U(\Hcal) \times \R \rightarrow U(\Hcal)$ for the geodesic flow of $\Hcal$ so, for example, $\zeta^{t}(\vec{u}) = \rho_{\vec{u}}'(t)$ for $t > 0$.  For $t > 0$, we will write $\rootg_{\vec{u}}(t)$ for the volume element in normal coordinates at $t\vec{u}$.  In $\R^{n}$, $\rootg_{\vec{u}}(t) = t^{n-1}$, and in an $n$-dimensional hyperbolic space with sectional curvature $\mathcal{K}$, $\rootg_{\vec{u}}(t) = \sn_{\Kcal}^{n-1}(t)$.  It follows from \cite[Lemma 5]{Yau} that:  

\begin{equation}
\label{yau_symmetry}
\displaystyle \sqrt{g}_{\vec{u}}(t) = \sqrt{g}_{-\zeta^{t}(\vec{u})}(t) = \sqrt{g}_{-\rho_{\vec{u}}'(t)}(t). \bigskip 
\end{equation}

The proof of the Bishop-G\"unther-Gromov volume comparison theorem implies that for any Cartan-Hadamard manifold $\Hcal^{n}$ with sectional curvature bounded above by $\mathcal{K} \leq 0$, for all $\vec{u} \in U(\Hcal)$ and $t > 0$, $\frac{\rootg_{\vec{u}}(t)}{\sn_{\Kcal}^{n-1}(t)} \geq 1$, with equality if and only if all $2$-planes tangent to $\rho_{\vec{u}}$ on the interval $[0,t]$ have sectional curvature $\mathcal{K}$.  We refer to \cite{BC,Gr} for the proof of the Bishop-G\"unther-Gromov volume comparison theorem and more background about this inequality.  We will use the following corollary of this fact in the proof of Theorem \ref{yau_inequality}: 

\begin{lemma}
\label{bg_lemma}
Let $\gamma:(-\infty,\infty) \rightarrow \Hcal^{n}$ be a geodesic in an $n$-dimensional Cartan-Hadamard manifold with curvature bounded above by $\mathcal{K} \leq 0$.  Let $0 < s < r$ and $0 < t < r - s$.  Then: 

\begin{equation}
\label{bg_lemma_eqn_1}
%\label{bg_lemma_eqn_2}
\displaystyle \frac{\sqrt{g}_{\gamma'(0)}(r)}{\sqrt{g}_{\gamma'(s)}(t)} \geq \frac{\sn_{\Kcal}^{n-1}(r)}{\sn_{\Kcal}^{n-1}(t)}, \medskip 
\end{equation}
with equality if and only if all $2$-planes tangent to $\gamma([0,r])$ have sectional curvature $\mathcal{K}$. 
\end{lemma}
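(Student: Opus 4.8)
The plan is to deduce the lemma from just two ingredients furnished by the proof of the Bishop--G\"unther--Gromov comparison theorem: the reversal symmetry (\ref{yau_symmetry}), and the monotonicity of the G\"unther ratio. Rewriting the claim (\ref{bg_lemma_eqn_1}) as $\rootg_{\gamma'(0)}(r)\,\sn_{\Kcal}^{n-1}(t) \geq \rootg_{\gamma'(s)}(t)\,\sn_{\Kcal}^{n-1}(r)$, it is equivalent to the single inequality
\begin{equation*}
\frac{\rootg_{\gamma'(0)}(r)}{\sn_{\Kcal}^{n-1}(r)} \geq \frac{\rootg_{\gamma'(s)}(t)}{\sn_{\Kcal}^{n-1}(t)},
\end{equation*}
so it suffices to show that the normalized volume spread of the fan of geodesics issuing from $\gamma(0)$, measured at distance $r$, dominates that of the fan issuing from $\gamma(s)$, measured at distance $t$. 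First I would record, from the Riccati comparison underlying the stated bound $\rootg_{\vec{u}}(\tau)/\sn_{\Kcal}^{n-1}(\tau) \geq 1$, that for every unit vector $\vec{u}$ the function $\tau \mapsto \rootg_{\vec{u}}(\tau)/\sn_{\Kcal}^{n-1}(\tau)$ is non-decreasing: its logarithmic derivative is $\operatorname{tr} U(\tau) - (n-1)\ct_{\Kcal}(\tau) \geq 0$, where $U(\tau)$ is the second fundamental form of the geodesic spheres about the base point of $\vec{u}$, and $U(\tau) \geq \ct_{\Kcal}(\tau)\,\mathrm{Id}$ by the upper curvature bound.

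The core of the argument is then the chain
\begin{align*}
\frac{\rootg_{\gamma'(s)}(t)}{\sn_{\Kcal}^{n-1}(t)}
&= \frac{\rootg_{-\gamma'(s+t)}(t)}{\sn_{\Kcal}^{n-1}(t)}
\leq \frac{\rootg_{-\gamma'(s+t)}(s+t)}{\sn_{\Kcal}^{n-1}(s+t)} \\
&= \frac{\rootg_{\gamma'(0)}(s+t)}{\sn_{\Kcal}^{n-1}(s+t)}
\leq \frac{\rootg_{\gamma'(0)}(r)}{\sn_{\Kcal}^{n-1}(r)}.
\end{align*}
The two equalities are instances of the reversal symmetry (\ref{yau_symmetry}): applied to $\vec{u} = \gamma'(s)$ it gives $\rootg_{\gamma'(s)}(t) = \rootg_{-\gamma'(s+t)}(t)$, and applied to $\vec{u} = \gamma'(0)$ at radius $s+t$ it gives $\rootg_{\gamma'(0)}(s+t) = \rootg_{-\gamma'(s+t)}(s+t)$. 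The first inequality is the monotonicity of the G\"unther ratio applied to the backward fan based at $\gamma(s+t)$, using $t \leq s+t$; the second is the monotonicity for the forward fan based at $\gamma(0)$, and this is where the hypothesis $s+t < r$, i.e. $t < r-s$, enters. Chaining these inequalities proves (\ref{bg_lemma_eqn_1}). Geometrically, reversal converts the ``short'' fan into a sub-fan of a fan that reaches all the way back to $\gamma(0)$, after which monotonicity does everything.

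For the equality statement, equality in (\ref{bg_lemma_eqn_1}) forces equality throughout the chain, and in particular in its last step, so the forward G\"unther ratio of the $\gamma(0)$-fan is constant on $[s+t,r]$. Since the excess $\operatorname{tr} U - (n-1)\ct_{\Kcal}$ is nonnegative, this constancy forces the full matrix $P := U - \ct_{\Kcal}\,\mathrm{Id} \geq 0$ to vanish identically on $[s+t,r]$, whence the curvature term vanishes there via the Riccati equation $P' = -P^{2} - 2\ct_{\Kcal}P + (\mathcal{K}\,\mathrm{Id} - R_{\gamma'})$ for $P$. The delicate point is the left endpoint: I must show that the single vanishing $P(s+t)=0$ already propagates backward to force $\mathcal{K}\,\mathrm{Id} - R_{\gamma'} \equiv 0$ on all of $[0,s+t]$, which recovers the \emph{middle} segment $[s,s+t]$. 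This follows from an integrating-factor argument on the Riccati equation (the quantity $\mu P$, with $\mu = \exp\!\int(2\ct_{\Kcal}+P)$, satisfies $(\mu P)' = \mu(\mathcal{K}\,\mathrm{Id}-R_{\gamma'}) \geq 0$, so a zero of $P$ cannot occur after a point where the source term is positive), and is exactly the rigidity clause of the stated comparison inequality. Combining the two halves gives sectional curvature $\mathcal{K}$ on every $2$-plane tangent to $\gamma$ over $[0,r]$; the converse is immediate, since constant curvature $\mathcal{K}$ makes each $\rootg_{\vec{u}}$ equal to $\sn_{\Kcal}^{n-1}$ and hence (\ref{bg_lemma_eqn_1}) an equality. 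I expect this backward-propagation step to be the main obstacle: the naive reading, that constancy of the ratio on a subinterval only constrains the curvature there, misses $[s,s+t]$ entirely, and only the propagation lemma makes the equality case come out to all of $[0,r]$ rather than to $[0,s]\cup[s+t,r]$.
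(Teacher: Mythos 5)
Your proof is correct and takes essentially the same route as the paper's: both arguments reduce the claim to the reversal symmetry (\ref{yau_symmetry}) combined with two applications of the monotonicity of the G\"unther ratio $\tau \mapsto \rootg_{\vec{u}}(\tau)/\sn_{\Kcal}^{n-1}(\tau)$, glued at an intermediate station along $\gamma$ (the paper splits the raw ratio at radius $r-s$, reversing the long fan to base it at $\gamma(r)$ and keeping the short fan at $\gamma(s)$, while you reverse at $\gamma(s+t)$ and route the normalized ratio through the fan based at $\gamma(0)$ --- an equivalent bookkeeping). Your explicit treatment of the equality case, in particular the observation that constancy of the ratio on a subinterval only pins down the curvature there and that one must propagate the vanishing of $P = U - \ct_{\Kcal}\,\mathrm{Id}$ backward through the Riccati equation to reach the middle segment, is a genuine subtlety that the paper's one-line assertion leaves implicit, and you resolve it correctly.
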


\begin{proof} To begin, 

\begin{equation}
\displaystyle \frac{\sqrt{g}_{\gamma'(0)}(r)}{\sqrt{g}_{\gamma'(s)}(t)} = \left( \frac{\sqrt{g}_{\gamma'(0)}(r)}{\sqrt{g}_{\gamma'(s)}(r-s)} \right) \left( \frac{\sqrt{g}_{\gamma'(s)}(r-s)}{\sqrt{g}_{\gamma'(s)}(t)} \right).  \bigskip 
\end{equation}

By (\ref{yau_symmetry}), $\frac{\sqrt{g}_{\gamma'(0)}(r)}{\sqrt{g}_{\gamma'(s)}(r-s)} = \frac{\sqrt{g}_{-\gamma'(r)}(r)}{\sqrt{g}_{-\gamma'(r)}(r-s)}$.  By the proof of the Bishop-G\"unther-Gromov inequality, $\frac{\sqrt{g}_{-\gamma'(r)}(r)}{\sqrt{g}_{-\gamma'(r)}(r-s)} \geq \frac{r^{n-1}}{(r-s)^{n-1}}$ and $\frac{\sqrt{g}_{\gamma'(s)}(r-s)}{\sqrt{g}_{\gamma'(s)}(t)} \geq \frac{(r-s)^{n-1}}{t^{n-1}}$, with equality only if the sectional curvature of all $2$-planes tangent to $\gamma([0,r])$ is $\mathcal{K}$. \end{proof}

We will write $V_{\Kcal}^{n}(s)$ for $\int_{0}^{s} \sn_{\Kcal}^{n-1}(t) dt$, so that the volume of a ball of radius $s$ in an $n$-dimensional hyperbolic space of curvature $\mathcal{K}$ is $\Sigma_{n-1} V_{\Kcal}^{n}(s)$, and we will write $W_{\Kcal}^{n}(r)$ for $\int_{0}^{r}V_{\Kcal}^{n}(s) ds$.  For $n=2$ we have $V_{\Kcal}^{2}(s) = \frac{1}{|\Kcal|}(\cs_{\Kcal}(s) - 1)$, $W_{\Kcal}^{2}(r) = \frac{1}{|\Kcal|}(\sn_{\Kcal}(r) - r)$ and for $n = 3$ we have $V_{\Kcal}^{3}(s) = \frac{1}{4|\Kcal|}(\sn_{\Kcal}(2s) - 2s)$, $W_{\Kcal}^{3}(r) = \frac{1}{4|\Kcal|}(\sn_{\Kcal}^{2}(r) - r^{2})$.  An elementary calculation shows that for $n \geq 4$, $W_{\Kcal}^{n}(r)$ can be described inductively by:  

\begin{equation}
\label{int_inductive_formula}
\displaystyle W_{\Kcal}^{n}(r) = \frac{1}{(n-1)|\Kcal|} \left( \frac{\sn_{\Kcal}^{n-1}(r)}{(n-1)} - (n-2) W_{\Kcal}^{n-2}(r) \right). \bigskip 
\end{equation}

We will write $\nabla r$ for the gradient of the chordal distance function $r$ on $M \times M$.  Note that this is different from $grad(r)$ as in the proof of Theorem \ref{main_thm_2}:  in the proof of Theorem \ref{main_thm_2} we fix $x \in M$ and consider $r$ as a function on $M \setminus f^{-1}(x)$.  Here we take the gradient of $r$ as a function on the Riemannian manifold $M \times M$.  We define: 

\begin{equation*}
\displaystyle \Psi_{\Kcal}^{2}(r,|\nabla r|) = |\nabla r| + (1 - |\nabla r|)\frac{r}{\sn_{\Kcal}(r)}, 
\end{equation*}
\begin{equation*}
\displaystyle \Psi_{\Kcal}^{3}(r,|\nabla r|) = |\nabla r| + (1 - |\nabla r|)\frac{r^{2}}{\sn_{\Kcal}^{2}(r)}, 
\end{equation*}
\begin{equation}
\label{yau_inequality_function}
\displaystyle \Psi_{\Kcal}^{n}(r,|\nabla r|) = |\nabla r| + \frac{(1 - |\nabla r|)(n-1)(n-2) W_{\Kcal}^{n-2}(r)}{\sn_{\Kcal}^{n-1}(r)} \bigskip  
\end{equation}
for $n \geq 4$.  Because $0 \leq |\nabla r| \leq 1$, $\Psi_{\Kcal}^{n}(r,|\nabla r|) > 0$ for all $(x,y) \in M \times M$ with $r > 0$.  

\begin{proof}[Proof of Theorem \ref{yau_inequality}] For almost all $\vec{u} \in U_{p}\Hcal$, at almost $p \in \Hcal$, 

\begin{equation}
\label{int_sq_pf_eq_1}
\displaystyle |w(M,p)| \leq \chi(f^{-1}(\rho_{\vec{u}})). \bigskip %\chi(f^{-1}(\rho_{\vec{u}})). \bigskip 
\end{equation}

This implies that for almost all $p_0$ in $\Hcal$, 

\begin{equation*}
%\label{int_sq_pf_eq_2}
\displaystyle \int\limits_{\Hcal} |w(M,p)| dVol_{\Hcal} \leq \int\limits_{U_{p_0}\Hcal}\int\limits_{0}^{\infty} \sqrt{g}_{\vec{u}}(t) \ \chi(f^{-1}(\rho_{\zeta^{t}(\vec{u})})) \ dt \ dVol_{U_{p_0}(\Hcal)}. \bigskip
\end{equation*}

Because (\ref{int_sq_pf_eq_1}) also holds for almost all $-\vec{u}$, that is, $|w(M,p_0)| \leq \chi(f^{-1}(\rho_{\vec{u}}))$ and $|w(M,p_0)| \leq \chi(f^{-1}(\rho_{-\vec{u}}))$,   

\begin{equation*}
\displaystyle \left( \int\limits_{\Hcal} |w(M,p)| dVol_{\Hcal} \right)^{2} \leq \int\limits_{U(\Hcal)} \chi(f^{-1}(\rho_{-\vec{u}})) \int\limits_{0}^{\infty} \sqrt{g}_{\vec{u}}(t) \  \chi(f^{-1}(\rho_{\zeta^{t}(\vec{u})})) dt dVol_{U(\Hcal)}  
\end{equation*}

\begin{equation}
\label{int_sq_pf_eq_3}
\displaystyle = \int\limits_{U(\Hcal)} \chi(f^{-1}(\rho_{-\vec{u}})) \left( \sum\limits_{y \in f^{-1}(\rho_{\vec{u}})} \int\limits_{0}^{r(\pi(\vec{u}),y)} \sqrt{g}_{\vec{u}}(t) \ dt \right) dVol_{U(\Hcal)}. \bigskip
\end{equation}

Given an oriented geodesic $\gamma$ in $\Hcal$, let $\gamma: (-\infty,\infty) \rightarrow \Hcal$ be a unit-speed parametrization, and for $s \in (-\infty,\infty)$, let $\gamma_{s}^{-} = \gamma(-\infty,s)$ and $\gamma_{s}^{+} = \gamma(s,\infty)$ be the geodesic rays determined by $\gamma(s)$.  Rewriting the right-hand side of (\ref{int_sq_pf_eq_3}) as an integral over $\Gcal$, we have: 

\begin{equation*}
\label{int_sq_pf_eq_5}
\displaystyle \left( \int\limits_{\Hcal} |w(M,p)| dVol_{\Hcal} \right)^{2} \leq \int\limits_{\Gcal}\int\limits_{-\infty}^{\infty} \chi( f^{-1}(\gamma_{s}^{-})) \left( \sum\limits_{y \in f^{-1}(\gamma_{s}^{+})} \int\limits_{0}^{r(\gamma(s),y)} \sqrt{g}_{\gamma'(s)}(t) \ dt \right) ds \ dVol_{\Gcal}. \bigskip 
\end{equation*}

By Proposition \ref{crofton_santalo}, the integral over $\Gcal$ in the right-hand side of this inequality can be rewritten as an integral over $\Ucal(f)$.  Letting $\sigma_{1} \in [0,\frac{\pi}{2}]$ be the angle that a unit vector $\vec{u}$ in $U_{x}\Hcal$ makes with $T_{x}M$, the factor $|\langle \vec{u}, \nu \rangle|$ introduced into the integrand in Proposition \ref{crofton_santalo} is equal to $\sin(\sigma_{1})$.  Therefore: 

\begin{equation*}
\displaystyle \left( \int\limits_{\Hcal} |w(M,p)| dVol_{\Hcal} \right)^{2} \leq \int\limits_{M} \int\limits_{U_{x}\Hcal} \sin(\sigma_{1}) \int\limits_{0}^{\infty} \sum\limits_{y \in f^{-1}(\rho_{\zeta^{s}(\vec{u})})} \int\limits_{0}^{r(\rho_{\vec{u}}(s),y)} \sqrt{g}_{\zeta^{s}(\vec{u})}(t) \ dt \ ds \ d\vec{u} \ dVol_{M}  
\end{equation*}

\begin{equation}
\label{int_sq_pf_eq_6}
\displaystyle = \int\limits_{M} \int\limits_{U_{x}\Hcal} \sin(\sigma_{1}) \sum\limits_{y \in f^{-1}(\rho_{\vec{u}})} \int\limits_{0}^{r(x,y)} \int\limits_{0}^{r(x,y)-s} \sqrt{g}_{\zeta^{s}(\vec{u})}(t) \ dt \ ds \ d\vec{u} \ dVol_{M}. \bigskip 
\end{equation}

By Sard's theorem, $\rho_{\vec{u}}$ meets $M$ transversely in finitely many points (or none) for almost all $\vec{u} \in U_{x}\Hcal$.  We can therefore rewrite the integral over $U_{x}\Hcal$ in (\ref{int_sq_pf_eq_6}) as an integral over $M$.  Let $l(x,y)$ be the geodesic chord through $x$ and $y \in M$ as above, parametrized so that $l(0) = x$ and $l(r) = y$, and let $\sigma_{2} \in [0,\frac{\pi}{2}]$ be the angle that $l(x,y)$ makes with $M$ at $y$.  In this notation, $\sqrt{g}_{\zeta^{s}(\vec{u})}(t) = \sqrt{g}_{l'(s)}(t)$, and the change-of-variables from integration over $\vec{u} \in U_{x}\Hcal$ to integration over $y \in M$ introduces the factor $\frac{\sin(\sigma_{2})}{\sqrt{g}_{l'(0)}(r)}$ into the integrand in (\ref{int_sq_pf_eq_6}).  We then have:  

\begin{equation}
\label{int_sq_pf_eq_7}
\displaystyle \left( \int\limits_{\Hcal} |w(M,p)| dVol_{\Hcal} \right)^{2} \leq \iint\limits_{M \times M} \frac{\sin(\sigma_{1})\sin(\sigma_{2})}{\sqrt{g}_{l'(0)}(r)} \int\limits_{0}^{r(x,y)} \int\limits_{0}^{r(x,y)-s} \sqrt{g}_{l'(s)}(t) \ dt ds dVol_{M \times M}. \bigskip 
\end{equation}

Lemma \ref{bg_lemma} implies that $\frac{\sqrt{g}_{l'(s)}(t)}{\sqrt{g}_{l'(0)}(r)} \leq \frac{\sn_{\Kcal}^{n-1}(t)}{\sn_{\Kcal}^{n-1}(r)} = \frac{\sinh^{n-1}(\sqrt{|\Kcal|}t)}{\sinh^{n-1}(\sqrt{|\Kcal|}r)}$, so (\ref{int_sq_pf_eq_7}) implies: 

\begin{equation*}
\displaystyle \left( \int\limits_{\Hcal} |w(M,p)| dVol_{\Hcal} \right)^{2} \leq \iint\limits_{M \times M} \frac{\sin(\sigma_{1})\sin(\sigma_{2})}{\sn_{\Kcal}^{n-1}(r)} \int\limits_{0}^{r(x,y)} \int\limits_{0}^{r(x,y)-s} \sn_{\Kcal}^{n-1}(t) \ dt \ ds \ dVol_{M \times M} 
\end{equation*}

\begin{equation*}
\displaystyle = \iint\limits_{M \times M} \frac{\sin(\sigma_{1})\sin(\sigma_{2})}{\sn_{\Kcal}^{n-1}(r)} \int\limits_{0}^{r} V_{\Kcal}^{n}(r-s) \ ds \ dVol_{M \times M} = \iint\limits_{M \times M}  \frac{\sin(\sigma_{1})\sin(\sigma_{2})W_{\Kcal}^{n}(r)}{\sn_{\Kcal}^{n-1}(r)} \ dVol_{M \times M}
\end{equation*}

\begin{equation*}
\displaystyle \leq \iint\limits_{M \times M} \frac{( 1 - \cos(\sigma_{1})\cos(\sigma_{2}))W_{\Kcal}^{n}(r)}{\sn_{\Kcal}^{n-1}(r)} \ dVol_{M \times M} 
\end{equation*}

\begin{equation}
\label{int_sq_pf_eq_9}
\displaystyle = \iint\limits_{M \times M} \left( 1 - |\nabla r|\right)\frac{W_{\Kcal}^{n}(r)}{\sn_{\Kcal}^{n-1}(r)} \ dVol_{M \times M}. \bigskip
\end{equation}

For $n=2,3$, the inequality in Theorem \ref{yau_inequality} follows immediately from (\ref{yau_inequality_function}) and (\ref{int_sq_pf_eq_9}).  For $n \geq 4$, by (\ref{int_sq_pf_eq_9}) and the identity (\ref{int_inductive_formula}) for $W_{\Kcal}^{n}(r)$,    

\begin{equation*}
\displaystyle \left((n-1)\sqrt{|\Kcal|} \int\limits_{\Hcal} |w(M,p)| dVol_{\Hcal} \right)^{2} \leq \iint\limits_{M \times M} (1 - |\nabla r|) \left( 1 - \frac{(n-1)(n-2) W_{\Kcal}^{n-2}(r)}{\sn_{\Kcal}^{n-1}(r)} \right) 
\end{equation*}

\begin{equation}
\label{int_sq_pf_eq_10}
\displaystyle = Vol(M)^{2} - \iint\limits_{M \times M} \left( |\nabla r| + \frac{(1 - |\nabla r|)(n-1)(n-2) W_{\Kcal}^{n-2}(r)}{\sn_{\Kcal}^{n-1}(r)} \right), \bigskip  
\end{equation}
which is the inequality in Theorem \ref{yau_inequality}. \\ 

Equality requires that equality holds in (\ref{int_sq_pf_eq_3}).  This implies that for almost all $p \in \Hcal$ with $w(M,p) \neq 0$, for almost all $\vec{u} \in U_{p}\Hcal$, $w(M,p) = \chi(f^{-1}(\rho_{\vec{u}})) = \chi(f^{-1}(\rho_{-\vec{u}}))$.  This implies that the geodesic $\gamma$ to which $\vec{u}$ and $-\vec{u}$ are tangent can intersect $M$ in at most two distinct points in $\Hcal$.  Therefore, almost all geodesics in $\Hcal$ intersect $f(M)$ at most twice, and $f(M)$ is the boundary of a convex domain $\Omega$ in $\Hcal$.  Equality also requires equality in (\ref{int_sq_pf_eq_9}).  This implies that all $2$-planes tangent to geodesic chords of $f(M)$ have sectional curvature equal to $\mathcal{K}$, and therefore that $\Omega$ has constant curvature, as in the proof of Theorem \ref{main_thm_2} in the codimension $1$ case.  Finally, equality requires that $\sin(\sigma_{1})\sin(\sigma_{2}) \equiv 1 - \cos(\sigma_{1})\cos(\sigma_{2})$, which implies that all geodesic chords of $f(M)$ meet $f(M)$ at the same angle at both geometrically distinct intersection points.  This implies that the domain $\Omega$ is isometric to a ball in an $n$-dimensional hyperbolic space with constant curvature $\mathcal{K}$.  \end{proof}

Assuming only that the curvature of $\Hcal$ is non-positive, using $\frac{\sqrt{g}_{l'(s)}(t)}{\sqrt{g}_{l'(0)}(r)} \leq \frac{t^{n-1}}{r^{n-1}}$ in (\ref{int_sq_pf_eq_7}), the proof of Theorem \ref{yau_inequality} implies:  

\begin{prop}
\label{sq_int_cor}
Let $f:M^{n-1} \rightarrow \Hcal^{n}$ be an immersion of a closed, oriented hypersurface in a Cartan-Hadamard manifold.  Then:  

\begin{equation}
\label{sq_int_thm_eqn_1}
\displaystyle \left( \int\limits_{\Hcal} |w(M,p)| dVol_{\Hcal} \right)^{2} \leq \frac{1}{n(n+1)} \iint\limits_{M \times M} (1-|\nabla r|)r^{2} \ dVol_{M \times M}. \medskip 	
\end{equation}

Equality holds if and only if $M$ is the boundary, possibly with multiplicity, of a domain $\Omega$ isometric to a ball in $\R^{n}$. \end{prop}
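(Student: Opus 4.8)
The plan is to run the proof of Theorem~\ref{yau_inequality} verbatim up to the inequality (\ref{int_sq_pf_eq_7}), and only then to specialize the volume comparison to the flat case. The key observation is that the derivation of (\ref{int_sq_pf_eq_7}) from (\ref{int_sq_pf_eq_1}) uses only the topological bound $|w(M,p)| \leq \chi(f^{-1}(\rho_{\vec{u}}))$, the Crofton--Santal\'o formula (Proposition~\ref{crofton_santalo}), and the changes of variables rewriting integrals over $\Gcal$ and $\Ucal(f)$ as integrals over $M \times M$; none of these steps invokes a negative upper curvature bound, so (\ref{int_sq_pf_eq_7}) is already valid for an arbitrary Cartan--Hadamard manifold. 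I would therefore simply quote (\ref{int_sq_pf_eq_7}) as the starting point.

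Next I would apply Lemma~\ref{bg_lemma} with $\mathcal{K}=0$, where $\sn_{\Kcal}(t)=t$, to obtain $\frac{\sqrt{g}_{l'(s)}(t)}{\sqrt{g}_{l'(0)}(r)} \leq \frac{t^{n-1}}{r^{n-1}}$, and substitute this bound into (\ref{int_sq_pf_eq_7}). The inner double integral is then elementary:
\begin{equation*}
\int\limits_{0}^{r} \int\limits_{0}^{r-s} t^{n-1} \, dt \, ds = \int\limits_{0}^{r} \frac{(r-s)^{n}}{n} \, ds = \frac{r^{n+1}}{n(n+1)},
\end{equation*}
so the right-hand side collapses to $\frac{1}{n(n+1)} \iint_{M \times M} \sin(\sigma_{1})\sin(\sigma_{2}) \, r^{2} \, dVol_{M \times M}$. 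Finishing exactly as in Theorem~\ref{yau_inequality}, I would invoke $\sin(\sigma_{1})\sin(\sigma_{2}) \leq 1 - \cos(\sigma_{1})\cos(\sigma_{2})$ (equivalently $\cos(\sigma_{1}-\sigma_{2}) \leq 1$) together with the identity $\cos(\sigma_{1})\cos(\sigma_{2}) = |\nabla r|$ used in that proof, which yields (\ref{sq_int_thm_eqn_1}). It is important to compute this iterated integral directly rather than trying to pass to the limit $\mathcal{K} \to 0$ in the quantities $W_{\Kcal}^{n}(r)$, which are singular at $\mathcal{K}=0$.

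For the equality case I would trace back through the same three inequalities. Equality in (\ref{int_sq_pf_eq_3}) forces $w(M,p) = \chi(f^{-1}(\rho_{\vec{u}}))$ for almost every $\vec{u}$ at almost every $p$, so almost every geodesic of $\Hcal$ meets $f(M)$ at most twice and $f(M)$ bounds a convex domain $\Omega$. Equality in the volume comparison, now with $\mathcal{K}=0$, forces all $2$-planes tangent to geodesic chords of $f(M)$ to be flat, so $\Omega$ has constant curvature $0$. Finally, equality in $\cos(\sigma_{1}-\sigma_{2})\leq 1$ forces $\sigma_{1}=\sigma_{2}$, i.e.\ every chord meets $f(M)$ at equal angles at its two endpoints; as in the codimension-$1$ equality analysis in the proof of Theorem~\ref{main_thm_2}, these conditions together identify $\Omega$ with a Euclidean ball. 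The converse --- that a round ball in $\R^{n}$, possibly covered with multiplicity, realizes equality --- is a direct verification.

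The genuine mathematical content is entirely contained in Theorem~\ref{yau_inequality}; the only real subtlety here is bookkeeping, namely confirming that (\ref{int_sq_pf_eq_7}) persists under the weaker hypothesis of non-positive (rather than strictly negative) curvature and that the degenerate case $\mathcal{K}=0$ is handled by direct integration. I expect the most delicate point to be the equality characterization, where one must verify that the three equality conditions are mutually consistent and admit no flat domain other than a metric ball.
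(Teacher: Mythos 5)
Your proposal is correct and follows essentially the same route as the paper, which proves this proposition in one line by substituting the flat volume comparison $\frac{\sqrt{g}_{l'(s)}(t)}{\sqrt{g}_{l'(0)}(r)} \leq \frac{t^{n-1}}{r^{n-1}}$ into (\ref{int_sq_pf_eq_7}) and noting that the derivation up to that point never used the negative upper curvature bound. Your explicit evaluation $\int_{0}^{r}\int_{0}^{r-s} t^{n-1}\,dt\,ds = \frac{r^{n+1}}{n(n+1)}$ and the traced-back equality analysis just fill in the bookkeeping the paper leaves implicit.
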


Theorem \ref{Howard_cor} follows from the proofs of Theorems \ref{main_thm_1} and \ref{yau_inequality}: 

\begin{proof}[Proof of Theorem \ref{Howard_cor}]
	
Let $\Hcal^{2}$ be a Cartan-Hadamard surface and $M$ a closed curve in $\Hcal$ of length $L$.  Assuming only that $\Hcal$ has non-positive curvature, Theorem \ref{Howard_cor} is Theorem \ref{main_thm_1}.  Assuming that the curvature of $\Hcal$ is bounded above by $\mathcal{K} < 0$, by (\ref{int_sq_pf_eq_9}) in the proof of Theorem \ref{yau_inequality}, 

\begin{equation*}
\displaystyle |\mathcal{K}| \left( \int\limits_{\Hcal} |w(M,p)| dVol_{\Hcal} \right)^{2} \leq |\mathcal{K}| \iint\limits_{M \times M} \frac{\sin(\sigma_{1})\sin(\sigma_{2})W_{\Kcal}^{2}(r)}{\sn_{\Kcal}(r)} dVol_{M \times M} 
\end{equation*}

\begin{equation}
\label{Howard_cor_pf_eqn_1}
\displaystyle = \iint\limits_{M \times M}  \left( \frac{\sn_{\Kcal}(r) - r}{\sn_{\Kcal}(r)} \right) \sin(\sigma_{1})\sin(\sigma_{2}) dVol_{M \times M}. \bigskip
\end{equation}
	
By (\ref{main_thm_1_B_eqn}), 

\begin{equation}
\label{Howard_cor_pf_eqn_2}
\displaystyle 4\pi \int\limits_{\Hcal} w(M,p)^{2} dA_{\Hcal} \leq L^{2} - \iint\limits_{M \times M} \left( \frac{\sn_{\Kcal}(r) - r}{\sn_{\Kcal}(r)} \right) \sin(\sigma_{1})\sin(\sigma_{2}) dVol_{M \times M}. \bigskip 
\end{equation}

Adding (\ref{Howard_cor_pf_eqn_1}) and (\ref{Howard_cor_pf_eqn_2}) gives the inequality in Theorem \ref{Howard_cor}.  Equality requires equality in Theorem \ref{main_thm_1} for $M$ in $\Hcal$, which has the same equality conditions as Theorem \ref{Howard_cor}. \end{proof}

We will prove Theorem \ref{generalization} using the some of the results from Section \ref{secants} that we used to prove Theorems \ref{main_thm_1} and \ref{main_thm_2}: 

\begin{proof}[Proof of Theorem \ref{generalization}] For $n=2$, Theorem \ref{generalization} is Theorem \ref{main_thm_1}.  For $n \geq 3$, we will write $\overline{S(M)}$ for the secant space of $M$ with the opposite orientation to the one given in Definition \ref{secant_space} and $\overline{S(M)}^{+}$ for the subset of $\overline{S(M)}$ on which $l:\overline{S(M)} \rightarrow \Gcal$ has a positive Jacobian.  By Corollary \ref{sign_cor}, $\overline{S(M)}^{+}$ coincides, up to a set of measure $0$, with the points $(x,y) \in M \times M$ for which $i_{x}(l,M) \neq i_{y}(l,M)$.  In particular, $(x,y) \in \overline{S(M)}^{+}$ precisely if $(y,x) \in \overline{S(M)}^{+}$.  By Theorem \ref{hyp_main_thm} and Definition \ref{G_measure} of the measure on $\Gcal$, 

\begin{equation*}
\displaystyle \Sigma_{n-1}\int\limits_{\Hcal} w(M,p)^{2} \ dVol_{\Hcal} = \int\limits_{\overline{S(M)}} r \ l^{*}(dVol_{\Gcal}) \leq \int\limits_{\overline{S(M)}^{+}} r \ l^{*}(dVol_{\Gcal})  
\end{equation*}

\begin{equation}
\label{generalization_pf_eqn_1}
\displaystyle = \int\limits_{\overline{S(M)}^{+}} \frac{r(\sin(\sigma_{1})\sin(\sigma_{2}))^{\frac{1}{n-1}}}{(\sin(\sigma_{1})\sin(\sigma_{2}))^{\frac{1}{n-1}}} \ l^{*}(dVol_{\Gcal}).
\end{equation}

By H\"older's inequality, $\Sigma_{n-1}\int\limits_{\Hcal} w(M,p)^{2} \ dVol_{\Hcal}$ is then bounded above by:  

\begin{equation}
\label{generalization_pf_eqn_2}
\displaystyle \left( \int\limits_{\overline{S(M)}^{+}} \frac{r^{n-1}}{\sin(\sigma_{1})\sin(\sigma_{2})} l^{*}(dVol_{\Gcal}) \right)^{\frac{1}{n-1}} \left( \int\limits_{\overline{S(M)}^{+}} (\sin(\sigma_{1})\sin(\sigma_{2}))^{\frac{1}{n-2}} l^{*}(dVol_{\Gcal}) \right)^{\frac{n-2}{n-1}}. \bigskip 
\end{equation}

By Proposition \ref{key_prop}, 

\begin{equation}
\label{generalization_pf_eqn_3}
\displaystyle \int\limits_{\overline{S(M)}^{+}} \frac{r^{n-1}}{\sin(\sigma_{1})\sin(\sigma_{2})} l^{*}(dVol_{\Gcal}) \leq Vol(\overline{S(M)}^{+}) \leq Vol(M)^{2}. \bigskip 
\end{equation}

By the arithmetic-geometric mean inequality, 

\begin{equation}
\label{generalization_pf_eqn_4}
\displaystyle \int\limits_{\overline{S(M)}^{+}} (\sin(\sigma_{1})\sin(\sigma_{2}))^{\frac{1}{n-2}} l^{*}(dVol_{\Gcal}) \leq \int\limits_{\overline{S(M)}^{+}} \frac{\sin^{\frac{2}{n-2}}(\sigma_{1}) + \sin^{\frac{2}{n-2}}(\sigma_{2})}{2} l^{*}(dVol_{\Gcal}). \bigskip 
\end{equation}

Using the fact that $\overline{S(M)}^{+}$ is invariant under the exchange of factors $(x,y) \mapsto (y,x) $, that this mapping preserves the measure induced by $l^{*}(dVol_{\Gcal})$ and that $\sigma_{2}(x,y) = \sigma_{1}(y,x)$, we have: 

\begin{equation*}
\displaystyle \int\limits_{\overline{S(M)}^{+}} \frac{\sin^{\frac{2}{n-2}}(\sigma_{1}) + \sin^{\frac{2}{n-2}}(\sigma_{2})}{2} l^{*}(dVol_{\Gcal}) = \int\limits_{\overline{S(M)}^{+}} \sin^{\frac{2}{n-2}}(\sigma_{1}) l^{*}(dVol_{\Gcal}) 
\end{equation*}

\begin{equation}
\label{generalization_pf_eqn_5}
\displaystyle = \int\limits_{\Gcal} \left( \sum\limits_{l^{-1}(\gamma) \cap \overline{S(M)}^{+}} \sin^{\frac{2}{n-2}}(\sigma_{1}) \right) dVol_{\Gcal}. \bigskip 
\end{equation}

For a pair of points $x,y$ in $M$ with $f(x), f(y) \in \gamma$, we will write $f(x) < f(y)$ to indicate that $f(y)$ maps to a point ahead of $f(x)$ according to the orientation of $\gamma$.  Letting $\sigma \in [0,\frac{\pi}{2}]$ for the angle that $\gamma$ makes with $M$ at $f(x)$, (\ref{generalization_pf_eqn_5}) is then equal to:  

\begin{equation}
\label{generalization_pf_eqn_6}
\displaystyle \int\limits_{\Gcal} \sum\limits_{f(x) \in \gamma} \sin^{\frac{2}{n-2}}(\sigma) \chi( \lbrace y \in M \ | \ f(y) > f(x), i_{x}(\gamma,M) \neq i_{y}(\gamma,M) \rbrace ) dVol_{\Gcal}. \bigskip  
\end{equation}

By Proposition \ref{crofton_santalo}, this is equal to: 

\begin{equation}
\label{generalization_pf_eqn_7}
\displaystyle \int\limits_{\Ucal(f)} |\sin(\sigma)|^{\frac{n}{n-2}} \chi( \lbrace y \in M \ | \ f(y) > f(x), i_{x}(\gamma,M) \neq i_{y}(\gamma,M) \rbrace ) dVol_{\Ucal}, \bigskip 
\end{equation} 
where $x$ in the above stands for the point in $M$ at which $\vec{u} \in \Ucal(f)$ is based.  This is equal to $\frac{1}{c_{n}}\int_{M} \mu(x) dVol_{M}$, where $c_{n}$ is the constant in (\ref{mass_function_constant}).  The inequality in Theorem \ref{generalization} then follows from (\ref{generalization_pf_eqn_2}), (\ref{generalization_pf_eqn_3}) and the upper bound in (\ref{generalization_pf_eqn_7}) for $\int_{\overline{S(M)}^{+}} (\sin(\sigma_{1})\sin(\sigma_{2}))^{\frac{1}{n-2}} l^{*}(dVol_{\Gcal})$. \\

Equality requires equality in (\ref{generalization_pf_eqn_1}), which implies that for almost all geodesics $\gamma \in \Gcal$, for each $(x,y) \in S(M)$ with $l(x,y) = \gamma$, $i_{x}(\gamma,M) \neq i_{y}(\gamma,M)$.  This implies that almost all geodesics in the image of $l$ have exactly two geometrically distinct points of intersection with $f(M)$ in $\Hcal$, and thus that the image of $f$ is a convex hypersurface in $\Hcal$, possibly with multiplicity.  Equality requires equality in (\ref{generalization_pf_eqn_3}), which requires equality in Proposition \ref{key_prop} and thus that all $2$-planes tangent to geodesic chords of $M$ have sectional curvature $0$.  As in the proof of the codimension-$1$ case of Theorem \ref{main_thm_2}, this implies the domain $\Omega$ bounded by $f(M)$ is flat.  Equality requires equality in (\ref{generalization_pf_eqn_4}), which implies that $\sin(\sigma_{1}) \equiv \sin(\sigma_{2})$.  Finally, equality requires equality in H\"older's inequality in (\ref{generalization_pf_eqn_2}) which, together with the fact that $\sin(\sigma_{1}) \equiv \sin(\sigma_{2})$, implies there is a constant $K_{f}$ such that: 

\begin{equation}
\label{generalization_pf_eqn_8}
\displaystyle \frac{r^{n-1}}{\sin(\sigma_{1})^{2}} = K_{f}(\sin(\sigma_{1}))^{\frac{2}{n-2}}, \bigskip 
\end{equation}
and therefore that $r^{n-1} = K_{f}\sin(\sigma_{1})^{\frac{2n-2}{n-2}}$.  Together with the conditions above, this is only possible for $n=4$, for domains isometric to a ball in $\R^{4}$. \end{proof} 

\begin{remark}
\label{croke_remark}

We believe it would be natural to try to prove a strengthened version of Theorem \ref{generalization} in which the average $\Acal(f)$ over $M$ is replaced by the average over $\Hcal$ of the absolute value of the winding number of $M$, or another similar invariant which involves averaging over the domains enclosed by $f(M)$, rather than over $M$ itself.  Such a result may give a stronger inequality, which coincides with Croke's isoperimetric inequality for embedded hypersurfaces.  If $D_{m} \int_{S(M)} r l^{*}(dI)^{m}$ is positive for all $f:M^{m} \rightarrow \Hcal^{n}$, as discussed after the proof of Proposition \ref{hyp_main_thm}, one could replace the condition defining $\overline{S(M)}^{+}$ in the proof of Theorem \ref{generalization} with the condition $\langle D_{m} l^{*}(dI)^{m}, dVol_{S(M)} \rangle > 0$.  In that case, the only point in the proof of Theorem \ref{generalization} which depends on the codimension $1$ condition of the immersion $f:M \rightarrow \Hcal$ is the identity for $\int_{\overline{S(M)}^{+}} \sin^{\frac{2}{n-2}}(\sigma_{1}) l^{*}(dVol_{\Gcal}) $ based on Proposition \ref{crofton_santalo}.  It would be interesting to know whether this invariant has a natural geometric interpretation for submanifolds of Cartan-Hadamard manifolds of higher codimension. \end{remark}

%%%%%%%%%%%%%%%%%%%%%%%%%%%%%%%%%%%%%%%%%%%%%%%%%%%%%%%%%%%%%%%%%%%%%%%%%%%%%%%%%%%%%%%%%%%%%%%%%%%%%%%%%%%%%%%%%%%%%%%%%

\end{document}